\newcommand{\vast}{\bBigg@{4}}
\newcommand{\Vast}{\bBigg@{5}}
\newcommand{\eps}{\varepsilon}
\newcommand{\N}{{\mathbb{N}}}
\newcommand{\R}{\mathbb{R}}
\newcommand{\p}{\partial}
\newcommand{\Dsh}{(-\Delta)^{s/2}}
\def\H{{\mathcal H}}
\newcommand{\E}{\mathscr{E}}
\newcommand{\J}{\mathscr{J}}
\newcommand{\G}{\mathscr{G}}
\newcommand{\F}{\mathscr{F}}
\def\H{\mathcal H}
\def\div{{\mathrm {div}}}
\theoremstyle{plain}
\numberwithin{equation}{section}
\newtheorem{theorem}{Theorem}[section]
\newtheorem{lemma}[theorem]{Lemma}
\newtheorem{remark}[theorem]{Remark}
\newtheorem{proposition}[theorem]{Proposition}
\newtheorem{corollary}[theorem]{Corollary}
\title{
An isoperimetric problem with a competing nonlocal  singular term}
\author{Antoine Mellet\thanks{mellet@umd.edu. Partially supported by NSF Grant DMS-2009236.}  and Yijing Wu\thanks{yijingwu@umd.edu}}
\date{University of Maryland \\ Department of Mathematics \\ College Park, MD 20742  USA}
\begin{document}

%\begin{frontmatter}

\maketitle

\begin{abstract}
In this paper, we investigate the minimization of a functional in which the usual perimeter is competing with a nonlocal singular term comparable (but not necessarily equal to) a fractional perimeter. The motivation for this problem is a cell motility model introduced in some previous work by the first author.
We establish several facts about global minimizers with a volume  constraint.
In particular we prove that minimizers exist and are radially symmetric for small mass, while
minimizers cannot be radially symmetric for large mass. For large mass, we prove that the minimizing sequences either split into smaller sets that drift to infinity or develop fingers of a prescribed width.
Finally, we connect these two alternatives to a related minimization problem for the optimal constant in a classical interpolation inequality (a Gagliardo-Nirenberg type inequality for fractional perimeter).
\end{abstract}

\section{Introduction}
\subsection{Setting of the problem}
Given a monotone decreasing radially symmetric kernel $K(x)$ satisfying
\begin{equation}\label{eq:assK}
0\leq K(x) \leq \frac{1}{|x|^{n+s}} \qquad \forall x\in \R^n, \mbox{ for some $s\in(0,1)$}
\end{equation}
we consider the volume constrained minimization of the following energy functional:
\begin{equation}\label{eq:Ja}
\J_{\alpha}(E):= \int_{\R^n} |D\chi_E| -\alpha  s(1-s) \int_{\R^n}{\int_{\R^n} {K(x-y)|\chi_E(x)-\chi_E(y)|dx}dy},  \end{equation}
where $\chi_E$ is the characteristic function of the set $E\subset \R^n$ and $\alpha$ is a positive parameter.
%We will also assume that $K$ is monotone decreasing (that is $K(x)=k(|x|)$ with $k:(0,\infty)\to[0,\infty)$ non-increasing function satisfying 
This functional involves the classical perimeter
\begin{align*}
P(E) & = \int_{\R^n} |D\chi_E| : = \sup\left\{\int_{\R^n} \chi_E \, \mathrm{div}\, g\, dx \,;\, g\in [C^1_c(\R^n)]^n, \; |g|\leq 1   \right\}
\end{align*}
and the nonlocal perimeter
\begin{align*}
P_K(E) & =  \int_{\R^n}{\int_{\R^n} {K(x-y)|\chi_E(x)-\chi_E(y)|dx}dy}= 2 \int_{\R^n} \int_{\R^n} K(x-y) \chi_E(x) \chi_{E^c} (y) dx dy .
\end{align*}
Condition \eqref{eq:assK} implies that 
$$ 0\leq P_K(E) \leq P_s(E)$$
where 
$$P_s(E): = \int_{\R^n} \int_{\R^n} \frac{|\chi_E(x)-\chi_E(y)|}{|x-y|^{n+s}} dx dy$$
is the usual $s$-perimeter (up to a constant), which has received a lot of attention \cite{cafroq,cafval,figV,ambrosio,val13} and appears in many applications.
The functional \eqref{eq:Ja} models phenomena in which the classical perimeter competes with a nonlocal interaction term. While the perimeter tries to aggregate the set $E$ into a ball, the nonlocal term has the opposite effect. This competition leads to a non trivial problem in which even the existence of minimizers is not obvious.
This problem is reminiscent of several recent works, including many results on the classical Gamow's liquid drop model of an atomic nucleus 
\cite{Muratov16,Muratov17,Muratov18,muratov-general,Muratov-planar}.
More recently, some results were obtained in \cite{DiCastro} in a framework similar to ours, when  $P_K=P_s$.

We will see that for small volume (or small $\alpha$), the perimeter dominates the nonlocal effect and that the minimizers exist and are balls (see Theorems \ref{thm:existence} and \ref{thm:ball}), while for larger volume, the ball cannot be the global minimizer (see Theorem \ref{thm:nonball}). 
Our results will be uniform in $K$ under assumption \eqref{eq:assK} and consistent with the natural scaling of the problem when $P_K=P_s$. This is important for the application we have in mind, which is discussed in Section \ref{sec:cell} below, in which the kernel $K$ itself depends on a small parameter $\eps$.

For large volume, the situation is less clear, and we show that two things might happen: Either the minimizers still exists but are no longer balls. Instead the set $E$ develops finger of width of the order of $\alpha^{-\frac{1}{1-s}}$. 
Or the minimizer no longer exists because the minimizing sequence splits into smaller sets. 
Indeed, we recall the following important properties of nonlocal perimeters: For disjoint sets $A$, $B$, we have
\begin{equation}\label{eq:PKAB}
P_K(A\cup B) = P_K(A) + P_K(B)  - 4\int_{\R^n} \int_{\R^n} K(x-y) \chi_A(x) \chi_B(y)\, dx\,dy . 
\end{equation}
If the sets $A$ and $B$ do not "touch" each others (so that $P(A\cup B)=P(A)+P(B)$), 
it follows that
$$
\J_\alpha(A\cup B) = \J_\alpha(A) + \J_\alpha(B) + \alpha s(1-s) 4\int_{\R^n} \int_{\R^n} K(x-y) \chi_A(x) \chi_B(y)\, dx\,dy$$
and so the energy decreases if the sets $A$ and $B$ are translated away from each others. From a mathematical point of view, this means that minimizers do not exist if the minimizing sequence splits into smaller sets.  However,
from the point of view of many applications, the interaction energy can be neglected if the sets $A$ and $B$ are far enough from each others, and it is interesting to study the structure of the minimizers when this term is neglected. This is the object of our Theorem \ref{thm:G} which shows that if the connected components do not interact with each other, then a global minimizer always exist. Furthermore, we can get a lower bound on the volume of the each set, and thus on the number of disconnected component.

We note that numerical computations (see \cite{Glasner}) show the existence of local (bounded) minimizers with intricate patterns of fingers, although these might not be the global minimizers. 

While we do not determine whether minimizers exist (and form fingers) or not (and split) for large volume, we show (see Proposition \ref{prop:JGN}) that when $P_K=P_s$ this question is directly related to the optimal constant in the following classical interpolation inequality for sets of finite perimeter, which plays an important role in our paper:
\begin{proposition}\label{prop:GN}
Denote by $\omega_n$ the volume of the ball $B_1$ in $\R^n$. Then for all $s\in(0,1)$, we have
\begin{equation}\label{eq:GN}
P_K(E)\leq P_s(E)\leq \frac{2^{1-s} n \omega_n}{s(1-s)} P(E)^s|E|^{1-s} \mbox{ for all sets of finite perimeter $E\subset \R^n$.}
\end{equation}
We denote by $\mu_{n,s}\leq 2^{1-s} n \omega_n$ the optimal constant for a given $s$, that is
\begin{equation}\label{eq:opt} 
\mu_{n,s}^{-1} = \inf  \frac{P(E)^s|E|^{1-s}}{s(1-s)P_s(E)} = \inf_{|E|=1}  \frac{P(E)^s}{s(1-s) P_s(E)} .
\end{equation}
%When $K(x)=\frac{1}{|x|^{n+s}}$ (that is $P_K=P_s $), we denote by $\mu_{n,s}$ the optimal constant in \eqref{eq:GN}.
\end{proposition}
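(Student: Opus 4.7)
The first inequality $P_K(E)\le P_s(E)$ is an immediate consequence of the pointwise bound $K(x)\le |x|^{-(n+s)}$ assumed in \eqref{eq:assK}, so the entire work is in proving the Gagliardo--Nirenberg type bound for $P_s$.

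My plan is to use the classical splitting technique at scale $r>0$: rewrite the $s$-perimeter via the change of variables $h=x-y$ as
\[
P_s(E) = \int_{\R^n} \frac{1}{|h|^{n+s}}\left(\int_{\R^n}|\chi_E(x+h)-\chi_E(x)|\,dx\right)dh,
\]
and decompose the outer integral into $\{|h|\le r\}$ and $\{|h|>r\}$. On the far region I use the trivial bound $\int|\chi_E(x+h)-\chi_E(x)|\,dx\le 2|E|$; switching to polar coordinates gives a contribution bounded by $\frac{2n\omega_n}{s}|E|\,r^{-s}$. On the near region I invoke the standard translation estimate for functions of bounded variation,
\[
\int_{\R^n}|\chi_E(x+h)-\chi_E(x)|\,dx \le |h|\,P(E),
\]
which yields a contribution bounded by $\frac{n\omega_n}{1-s}P(E)\,r^{1-s}$. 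Adding these gives
\[
P_s(E)\le \frac{n\omega_n}{1-s}P(E)\,r^{1-s}+\frac{2n\omega_n}{s}|E|\,r^{-s}\qquad \textforall r>0.
\]

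The final step is a one-variable optimization in $r$. Differentiating the right-hand side and setting the derivative to zero gives $r_\ast = 2|E|/P(E)$; substituting this value back collects a factor $\frac{1}{1-s}+\frac{1}{s}=\frac{1}{s(1-s)}$ together with $2^{1-s}$, producing exactly the constant $\frac{2^{1-s}n\omega_n}{s(1-s)}$ claimed in \eqref{eq:GN}. The characterization of $\mu_{n,s}$ by scaling follows by noting that $P$, $P_s$, and $|E|$ scale under $E\mapsto \lambda E$ as $\lambda^{n-1}$, $\lambda^{n-s}$, and $\lambda^n$ respectively, so the dimensionless ratio $P(E)^s|E|^{1-s}/P_s(E)$ is scale invariant and the infimum may be taken over unit volume sets.

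The only nontrivial ingredient is the translation estimate $\int|\chi_E(x+h)-\chi_E(x)|\,dx \le |h|P(E)$, which I would either cite from a standard reference on BV functions or derive quickly by approximating $\chi_E$ by smooth functions and using $\chi_E(x+h)-\chi_E(x)=\int_0^1 h\cdot\nabla\chi_E(x+th)\,dt$ in the distributional sense, followed by Fubini. Everything else is a direct computation, and I do not anticipate real obstacles beyond checking the arithmetic of the optimization.
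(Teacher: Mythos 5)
Your proof is correct and follows essentially the same route as the paper's Appendix A: the same near/far decomposition of the kernel integral at scale $r$, the same two elementary bounds $\int|\chi_E(\cdot+h)-\chi_E|\,dx\le\min\{2|E|,\,|h|P(E)\}$, and the same optimization $r_\ast=2|E|/P(E)$ producing the constant $2^{1-s}n\omega_n/(s(1-s))$. The only cosmetic difference is that the paper bounds $P_K$ directly via $K(z)\le|z|^{-n-s}$ inside the integrals, while you first record $P_K\le P_s$ and then estimate $P_s$; these are equivalent.
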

The proof of this proposition is classical and presented in Appendix \ref{app:GN} for the sake of completeness. 
To the authors' knowledge, it is an open question whether the optimal constant in \eqref{eq:GN} is reached (i.e. there is equality for a set of finite perimeter) or not (minimizing sequences for \eqref{eq:opt} have unbounded perimeters).

Finally, note that we include the constant $s(1-s)$ in front of $P_K$ in \eqref{eq:Ja} so that we can easily make sense of our result when $s\to 0^+$ and $s\to 1^-$. Indeed, it is known (see for instance \cite{Dipierro13,ambrosio}) that 
$$
\lim_{s\to 1^-} (1-s)P_s(E) = 2\omega_{n-1} P(E), \qquad
\lim_{s\to 0^-} s P_s(E) = 2n\omega_n |E|
$$
so volume constraint minimizers of $\J_\alpha$ always exist and are balls when $s\to 0$ and exist (and are balls) if $\alpha$ is small enough when $s\to1$.

\medskip

In Section \ref{sec:cell} below, we will briefly motivate the problem by relating it to a more classical minimization problem. Our main results are presented and discussed in Section \ref{sec:main} and the rest of the paper is devoted to their proofs.

\subsection{Motivation and related functional}\label{sec:cell}
To motivate this work, we consider the functional
$$
J (E) = \sigma P(E) + \beta V_K(E)$$
where $V_K$ is a nonlocal repulsive interaction energy defined by
\[
V_K(E)  =\int_{\R^n}{  \int_{\R^n} {K(x-y) \chi_{E} (x) \chi_E(y)dx}dy}.
\]
This energy functional has been intensively studied with the kernel $K(x)=\frac{1}{|x|^{n-a}}$ for some $a\in(0,n)$. The case $a=n-1$ and $n=3$ in particular corresponds to the classical Gamow's liquid drop model of an atomic nucleus \cite{Muratov16,Muratov17,Muratov18,muratov-general,Muratov-planar}.
We note that the two terms in $J$ have opposite effects: The perimeter tries to keep the mass together in a ball, while the repulsion potential $V_K$ has the tendency to spread the mass around. 
In \cite{figalli}, a detailed analysis of the related functional $P_s + \alpha V_K$ is done.

The motivation for our study is very different and stems from some model for cell motility recently introduced in \cite{CMM}. In that framework, the kernel  $K$ depends on a small parameter $\eps\ll1$ and is the solution of
\[
K_{\epsilon}+\eps^s\Dsh K_{\eps}=\delta(x).
\]
We note in particular that we can write 
$K_{\eps}(x)=\eps^{-n}K(\frac{x}{\eps})$, with $K$ solution of
%for some $K(x)$ satisfying $\int_{\R^n}{K(x)dx}=1$ and 
\begin{equation}\label{eq:K}
K+\Dsh K =\delta(x).
\end{equation}
%In particular, we have  $\hat{K}(\xi)=\frac{1}{1+|\xi|^s}$.
For $s\in(0,2)$, we have
\begin{lemma}\label{lem:K}
Assume that $s\in(0,2)$. Then the kernel $K$, solution of \eqref{eq:K}, satisfies $K(z)= k(|z|)$ with $k:[0,\infty)\to [0,\infty)$, $\int_{\R^n} K(x)\, dx=1$ and
\begin{equation}\label{eq:Ks}
k(r)\sim \frac{k_0}{r^{n+s}} \mbox{ as } r\to\infty.
\end{equation}
\end{lemma}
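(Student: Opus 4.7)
The natural tool is the Fourier transform. Applying it to equation \eqref{eq:K} gives
$$\hat K(\xi) = \frac{1}{1+|\xi|^s},$$
which is a positive, radial, bounded function decaying at infinity. So $K$ is well defined as a tempered distribution (in fact as a continuous function away from $0$), and the radiality of $\hat K$ immediately forces $K$ to be radial, so $K(x) = k(|x|)$.

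To get the sign, total mass, and eventually the tail asymptotic, I would use the subordination identity
$$\frac{1}{1+|\xi|^s} = \int_0^\infty e^{-t}\, e^{-t|\xi|^s}\,dt,$$
which after inverse Fourier transform yields
$$K(x) = \int_0^\infty e^{-t}\, p_s(t,x)\,dt,$$
where $p_s(t,\cdot)$ is the fundamental solution of $\partial_t u + \Dsh u = 0$, i.e.\ the transition density of the isotropic $s$-stable process. For $s\in(0,2)$, $p_s$ is nonnegative, radial in $x$, self-similar ($p_s(t,x) = t^{-n/s} p_s(1,t^{-1/s}x)$), and has unit total mass. From this the first two claims follow at once: $K\geq 0$, and
$$\int_{\R^n} K(x)\,dx = \int_0^\infty e^{-t}\Bigl(\int_{\R^n} p_s(t,x)\,dx\Bigr)dt = \int_0^\infty e^{-t}\,dt = 1.$$

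For the tail behavior \eqref{eq:Ks}, I would invoke the classical Blumenthal--Getoor pointwise asymptotic
$$p_s(1,y) \sim \frac{k_1}{|y|^{n+s}} \textas |y|\to\infty, \qquad p_s(1,y) \leq \frac{C}{1+|y|^{n+s}},$$
valid for all $s\in(0,2)$. Using the self-similarity,
$$|x|^{n+s}\, p_s(t,x) \;=\; t \, \bigl(t^{-1/s}|x|\bigr)^{n+s} p_s(1, t^{-1/s}x) \;\longrightarrow\; k_1\, t$$
pointwise in $t>0$ as $|x|\to\infty$, with integrable domination $|x|^{n+s} e^{-t} p_s(t,x) \leq C t e^{-t}$ coming from the upper bound above. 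Dominated convergence in the subordination formula then yields
$$|x|^{n+s} K(x) \longrightarrow k_1 \int_0^\infty t\,e^{-t}\,dt = k_1 =: k_0 > 0,$$
which is exactly \eqref{eq:Ks}.

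The only nontrivial ingredient is the pointwise tail asymptotic of $p_s(1,\cdot)$. This is a classical fact for isotropic $\alpha$-stable densities, so I would simply cite it rather than reprove it; alternatively it can be obtained by a direct Fourier analysis of $e^{-|\xi|^s}$, writing $e^{-|\xi|^s} = 1 - |\xi|^s + O(|\xi|^{2s})$ near $\xi=0$ and using the distributional identity $\mathcal F^{-1}[|\xi|^s](x) = -c_{n,s}|x|^{-(n+s)}$ to extract the leading large-$|x|$ behavior. Everything else in the proof is routine manipulation of the subordination formula.
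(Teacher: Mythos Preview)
The paper states Lemma~\ref{lem:K} without proof (it is presented as a background fact about the kernel coming from the cell-motility model, and no argument for it appears anywhere in the paper or its appendices), so there is nothing to compare against. Your argument is correct and is the standard way to establish these properties: the subordination representation $K=\int_0^\infty e^{-t}p_s(t,\cdot)\,dt$ immediately gives radiality, nonnegativity and unit mass, and the Blumenthal--Getoor tail asymptotic for the stable density combined with dominated convergence yields the $r^{-(n+s)}$ decay. The self-similarity computation and the domination $|x|^{n+s}e^{-t}p_s(t,x)\leq Cte^{-t}$ are both checked correctly. The only step you defer to the literature is the pointwise asymptotic $p_s(1,y)\sim k_1|y|^{-(n+s)}$ with the accompanying global bound; this is indeed classical for isotropic stable densities, so citing it is appropriate.
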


Note also that $K(x)\sim \frac{k_1}{|x|^{n-s}}$ when $|x|\to0$, and the energy $V_{K_\eps}$ is comparable to the Reisz potential when $\eps=\mathcal O(1)$, but we are interested here in the regime where $\eps\ll 1$, $\beta\gg1$. 

\medskip

The functional  $V_K$  is related to nonlocal perimeters in the following way:
\begin{align}
V_K(E) & = \int_{\R^n} \chi_E K * \chi_E \, dx\nonumber \\
& = \int_{\R^n}(1-\chi _{\R^n\setminus E}) K * \chi_E \, dx\nonumber \\
& = |E| - \int_{\R^n} \chi_{\R^n\setminus E} K * \chi_E\, dx \nonumber \\
& = |E| - \frac 1 2  P_{K} (E).\label{eq:VK}
\end{align}
Since we are interested in the minimization of $J$ under the volume constraint $|E|=m$, it is equivalent to the minimization of 
\begin{equation}\label{eq:Eeps}
\bar J(E) = P(E) -\frac{\beta}{\sigma} P_{K_\eps}(E)
\end{equation}
with the same constraint.
Finally, when $s\in(0,1)$, the interesting regime corresponds to $\eps\ll1$ and $\frac{\beta}{\sigma} \sim \eps^{-s}$ (this is the regime in which the two terms in $\bar J$ have the same order in $\eps$). If we set $\alpha = \frac{\beta k_0}{\sigma s(1-s)} \eps^s$, we are led to investigate the functional 
$$ \bar J_{\alpha,\eps}(E) =P(E) -\alpha s(1-s) P_{\bar K_\eps}(E), \quad \bar K_\eps (x)= \eps^{-s} K_\eps(x) = \eps^{-(n+s)}K\left(\frac{x}{\eps}\right).$$

Importantly, the kernel $\bar K_\eps$ satisfy the upper bound \eqref{eq:assK} uniformly with respect to $\eps$ (up to a constant), with equality in the limit $\eps\to0$.
The results derived in this paper therefore apply to the functional  $\bar J_{\alpha,\eps}$ uniformly in $\eps$.

\begin{remark}
We note that we only work with $s\in(0,1)$ in this paper, even though Lemma \ref{lem:K} holds for $s\in (0,2)$.
Indeed, when $s>1$, 
the interesting regime corresponds to $\eps\ll1$ and $\frac{\beta}{\sigma} \sim \eps^{-1}$:
If we set $\alpha = \frac{\beta}{\sigma } \eps $, we get the functional 
$$ \bar J_{\alpha,\eps}(E) =P(E) -\alpha   P_{\bar K_\eps}(E), \quad \bar K_\eps (x)= \eps^{-1} K_\eps(x) = \eps^{-(n+1)}K\left(\frac{x}{\eps}\right).$$
and a classical result (see \cite{BBM,Davila}) implies that 
$$ P_{\bar K_\eps}(E) \to \sigma_0 P(E)$$
for some constant $\sigma_0$. So the analysis in that case is somewhat different and in the limit, the behavior depends on whether $\alpha \sigma_0<1$ (in which case the ball is the unique minimizer) or $\alpha \sigma_0>1$ (in which case minimizers do not exist).
%The case $\eps=1$ is critical, but a similar result holds if we take $\alpha = \frac{\beta k_0}{\sigma  } \eps |\ln\eps| $.
\end{remark}
The $\Gamma$-convergence of $P_{\bar K_\eps}$ to $P_s$ when $s\leq 1$ is the object of a forthcoming paper.

\section{Main results}\label{sec:main}
The paper is thus devoted to the following minimization problem in $\R^n$ (expect for Theorem \ref{thm:ball}, all of our results hold for any $n\geq 2$):
\begin{equation}\label{eq:min}
 \inf\big\{      \J_{\alpha}(F)\,;\, |F|=m\big\} ,
\end{equation}
where we recall that
$$\J_{\alpha}(F)= P(F)-\alpha s(1-s) P_K(F).$$
This problem has two parameters, $\alpha $ and $m$.
When we have equality in \eqref{eq:assK}, that is when $P_K = P_s$, these parameters are related by a natural scaling. Indeed, if we define
$$ \F_{s,\alpha}(F) = P(F)-\alpha s(1-s) P_s(F),$$
then $E$ is a minimizer of  $\F_{s,\alpha}$ with $|E|=m$ if and only if $\tilde E=\frac{1}{m^{1/n}}E$ is a minimizer of  $\F_{s,\gamma}$ with $|\tilde E|=1$ where 
\begin{equation}\label{eq:scaling}
\gamma=\alpha m^ {\frac{1-s}{n}}.
\end{equation}
Even though this is no longer valid when we only have the inequality  \eqref{eq:assK}, our results below will be consistent with this scaling.
\medskip

Next, we note that the existence of minimizing sequence, with bounded perimeter, is easy to establish. Indeed,
using \eqref{eq:GN} and Young's inequality, we find
\begin{align*}
s(1-s)\alpha P_K(E) & \leq s(1-s)\alpha P_s(E) \nonumber \\
& \leq 2^{1-s}n\omega _n \alpha P(E)^s|E|^{1-s}  \nonumber \\
& \leq \frac{1}{2} P(E) +  \big( (1-s)^{1-s} s^s n\omega_n \alpha \big)^{\frac{1}{1-s}} |E| 
\end{align*}
and so
\begin{equation}\label{eq:KP}
s(1-s)\alpha P_K(E) \leq \frac{1}{2} P(E) +  \big( n\omega_n \alpha \big)^{\frac{1}{1-s}} |E| 
\end{equation}
(since $1/2\leq  (1-s)^{1-s} s^s < 1$ for all $s\in (0,1)$).
%, we write
%\begin{equation}
%s(1-s)\alpha P_K(E)  \leq \frac{1}{2} P(E) + \frac{1}{2} \big( 2 c_n \alpha \big)^{\frac{1}{1-s}} |E|
%\end{equation}
In particular, we have 
\begin{equation}\label{eq:Jlower} 
\J_\alpha(E) \geq \frac{1}{2} P(E) - \big( n\omega_n \alpha \big)^{\frac{1}{1-s}}  |E| \qquad \mbox{ for all $E\subset \R^n$}
\end{equation}
which  guarantees the existence of a minimizing sequence $\{F_k\}_{k\in\N}$ with volume constraint $|F_k|=m$ and satisfying $P(F_k)\leq C$.
The difficulty in proving the existence of a minimizer for \eqref{eq:min} is that the sets $F_k$ might not be bounded in $\R^n$.
In particular, $F_k$ might split into two (or more) connected components moving away from each others (since translating the components away from each others does not change the perimeter but increases the  nonlocal perimeter, and thus decreases the functional $\J_\alpha$). 
Our first result shows that this does not happen when $\alpha$ (or equivalently $m$) is small enough. 
More precisely, we show the existence of a bounded minimizer for $\J_\alpha$  when the $\alpha\, m^{\frac{1-s}{n}}$ is small enough (which is consistent with the scaling \eqref{eq:scaling}):
%The first result establishes the existence of a bounded minimizer for $\J_\alpha$  when the $\alpha\, m^{\frac{1-s}{n}}$ is small enough. 
We note that a similar result is proved in \cite{DiCastro} for the functional $\F_{s,\alpha}$ (that is when $P_K=P_s$).
\begin{theorem}\label{thm:existence}
Let $K(x)$ be a radially symmetric, non-increasing function satisfying \eqref{eq:assK} for some $s\in(0,1)$.
There exists $\gamma_0>0$ depending only on $n$  such that if
$$
\alpha\, m^{\frac{1-s}{n}}\leq \gamma_0
$$
then the problem \eqref{eq:min} has a bounded minimizer.
%functional $\J_{\alpha} $ has a bounded minimizer $E$ under the constraint $|E|=m$.
\end{theorem}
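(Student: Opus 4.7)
The plan is a concentration-compactness argument for a minimizing sequence $\{F_k\}$ with $|F_k|=m$. By \eqref{eq:Jlower} we first note that $P(F_k)\leq C$ uniformly, and then \eqref{eq:GN} also bounds $P_K(F_k)\leq C$. A standard non-vanishing lemma for BV sets of prescribed volume and bounded perimeter (a BV variant of P.-L.~Lions' concentration-compactness) produces translations $\tau_k\in\R^n$ and constants $r,c>0$ such that after translating, $|F_k\cap B_r|\geq c$. BV compactness then extracts a subsequence with $\chi_{F_k}\to\chi_F$ in $L^1_{\loc}(\R^n)$, and $|F|=:m_1\in(0,m]$. Set $m_2:=m-m_1\geq 0$.

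The main task is to rule out $m_2>0$, i.e.\ loss of mass at infinity. Choose radii $R_k\to\infty$ via coarea/Chebyshev averaging so that the slice perimeter $\H^{n-1}(F_k\cap\partial B_{R_k})\to 0$ and $|F_k\cap B_{R_k}|\to m_1$, $|F_k\setminus B_{R_k}|\to m_2$. Writing $F_k^{(1)}=F_k\cap B_{R_k}$ and $F_k^{(2)}=F_k\setminus B_{R_k}$, the identity $P(F_k^{(1)})+P(F_k^{(2)})=P(F_k)+2\H^{n-1}(F_k\cap\partial B_{R_k})$ and \eqref{eq:PKAB}, together with the $L^1$-decay of $K$ implied by \eqref{eq:assK}, yield $\J_\alpha(F_k)=\J_\alpha(F_k^{(1)})+\J_\alpha(F_k^{(2)})+o(1)$. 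Passing to the liminf gives
\[
I(m)\geq I(m_1)+I(m_2), \qquad I(\mu):=\inf\{\J_\alpha(E):|E|=\mu\}.
\]

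The heart of the proof, and the main obstacle, is strict subadditivity $I(m)<I(m_1)+I(m_2)$ for all $m_1,m_2>0$ whenever $\alpha m^{(1-s)/n}\leq\gamma_0$. We combine the upper bound $I(m)\leq \J_\alpha(B_m)\leq n\omega_n^{1/n}m^{(n-1)/n}$ (ball competitor) with a matching lower bound $I(\mu)\geq n\omega_n^{1/n}\mu^{(n-1)/n}-C_n\alpha\mu^{(n-s)/n}$, obtained from \eqref{eq:GN} and the isoperimetric inequality by checking that, for $\alpha\mu^{(1-s)/n}$ small, the function $t\mapsto t-2^{1-s}n\omega_n\alpha\,t^s\mu^{1-s}$ is monotone increasing on $[n\omega_n^{1/n}\mu^{(n-1)/n},\infty)$. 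Setting $\lambda=m_1/m$, the strict subadditivity reduces to the elementary inequality
\[
\lambda^{(n-1)/n}+(1-\lambda)^{(n-1)/n}-1>C\alpha m^{(1-s)/n}\bigl[\lambda^{(n-s)/n}+(1-\lambda)^{(n-s)/n}\bigr],
\]
which holds for $\gamma_0$ small whenever $\lambda$ is bounded away from $0$ and $1$, by the strict concavity of $x\mapsto x^{(n-1)/n}$. The delicate case in which a very small mass escapes requires a separate argument: we invoke the classical density estimates for quasi-minimizers of the perimeter functional, which remain valid here because \eqref{eq:GN} makes the nonlocal term subordinate to the perimeter. These estimates force every connected ``tail'' of a near-minimizer to have mass bounded below by a positive constant depending only on $n$ and $\alpha m^{(1-s)/n}$, thereby restricting the splitting in Step~2 to the generic regime in which the reduction above applies.

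Once $m_2=0$ is ruled out, lower semicontinuity of $P$ and Fatou-type convergence of $P_K$ (again via \eqref{eq:assK}) give $\J_\alpha(F)\leq \liminf\J_\alpha(F_k)=I(m)$, so $F$ attains the infimum. A final application of the density estimate to $F$ itself, combined with the same truncation/absorption argument, shows that $F$ cannot have mass at infinity, hence is bounded. The principal difficulty throughout is controlling the ``small mass escape'' regime through density estimates for the nonlocal perimeter-dominated functional.
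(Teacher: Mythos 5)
Your concentration-compactness strategy is a legitimate alternative to the paper's direct truncation argument, and most of its steps are sound: the a priori perimeter bound via \eqref{eq:Jlower}, the Lions non-vanishing argument, the dichotomy $I(m)\geq I(m_1)+I(m_2)$, the ball upper bound, and the lower bound $I(\mu)\geq \nu_n\mu^{(n-1)/n}-C\alpha\mu^{(n-s)/n}$ via \eqref{eq:GN} plus isoperimetry all go through. However, the treatment of the small-mass-escape case contains a genuine gap, and it is precisely the case you yourself flag as delicate.

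The issue is that strict subadditivity simply does not follow from your upper/lower bounds when $\lambda=m_2/m$ is small. Writing the reduced inequality as $h(\lambda):=\lambda^{(n-1)/n}+(1-\lambda)^{(n-1)/n}-1 > C'\gamma\bigl[\lambda^{(n-s)/n}+(1-\lambda)^{(n-s)/n}\bigr]$, one has $h(\lambda)\to 0$ as $\lambda\to 0^+$ while the right-hand side tends to $C'\gamma>0$, so the inequality fails for $\lambda \lesssim \gamma^{n/(n-1)}$ regardless of how small $\gamma_0$ is. To close this you invoke ``density estimates for quasi-minimizers'' to force the escaping mass to be bounded below, but this is not a valid step: (a) density estimates of the form $|E\cap B_r(x_0)|\geq cr^n$ are established in the paper (Lemma \ref{nondegeneracyE}) only for actual minimizers $E$, not for arbitrary elements of a minimizing sequence, and adapting them to near-minimizers would require its own argument which you do not give; and (b) even granting the density estimate, it does not bound the volume of an escaping component from below -- a small ball of radius $r$ trivially satisfies $|B_r\cap B_\rho(x_0)|\geq c\rho^n$ for $\rho\leq r$, yet can have arbitrarily small volume. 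The correct tool for the small-$\lambda$ regime is a dilation argument: discard the escaping piece $F_k^{(2)}$, rescale $F_k^{(1)}$ by $t=(m/m_1)^{1/n}$ to restore the volume constraint, and use the estimate of Lemma \ref{PJ(tE)} to bound the change in $P_K$ under dilation by $Ct\,P^s|F|^{1-s}$. One then compares the $O(\lambda)$ cost of dilation against the $\gtrsim\lambda^{(n-1)/n}$ energy gain from deleting $F_k^{(2)}$; since $\lambda\ll\lambda^{(n-1)/n}$ for small $\lambda$, this yields a strict improvement. This is exactly the mechanism underlying Case 2 of the paper's Proposition \ref{prop:wF}. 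Without this replacement your argument does not establish existence.

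For comparison, the paper bypasses concentration-compactness entirely: it uses the quantitative isoperimetric inequality to show that any near-minimizer is $L^1$-close to a ball (Lemma \ref{lem:prelbound}), then truncates at a carefully chosen sphere in $(r_0,2r_0)$, handling separately the regimes where the cut surface is large (ODE argument giving $F\subset B_{2r_0}$) or small (discard-and-dilate argument). The advantage is that this directly produces a minimizing sequence confined to a fixed ball, so boundedness of the limit is automatic, whereas your approach still needs a further argument at the very end to show the limiting minimizer is bounded in space.
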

When $\alpha=0$, the only minimizer of $\J_0=P$ is the ball, and we next show that when $\alpha\, m^{\frac{1-s}{n}}$ is small enough, the unique bounded minimizer of \eqref{eq:min} is also the ball of volume $m$ (again, we note that a similar result is proved in \cite{DiCastro} for the functional $\F_{s,\alpha}$):
\begin{theorem}\label{thm:ball}
Assume $2\leq n\leq 7$ and let $K(x)$ be a radially symmetric, non-increasing function satisfying \eqref{eq:assK} for some $s\in(0,1)$.
There exists $\gamma_1>0$ depending on $n$ and $s$ such that if
$$
\alpha\, m^{\frac{1-s}{n}}\leq \gamma_1
$$
then the
unique (up to translation) bounded minimizer $E$ 
of the problem \eqref{eq:min} is the ball of volume $m$.
\end{theorem}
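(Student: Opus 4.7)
The plan is to follow the classical Fuglede-type strategy: show that any bounded minimizer must be a $C^1$-small normal graph over a ball, then use the quadratic nature of $\mathcal{J}_\alpha$ around the ball to conclude. After the rescaling $E \mapsto m^{-1/n}E$ (which replaces $K$ by $\tilde K(x)=m^{(n+s)/n}K(m^{1/n}x)$, still satisfying \eqref{eq:assK}), one reduces to $|E|=1$ with a single small parameter $\gamma := \alpha m^{(1-s)/n}$.

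Closeness to a ball. Let $E$ be a bounded minimizer, which exists by Theorem~\ref{thm:existence}, and let $B$ be the unit ball. Comparing $\mathcal{J}_\alpha(E) \leq \mathcal{J}_\alpha(B)$ and using \eqref{eq:KP} gives $P(E) - P(B) \leq C \gamma^{\tau}$ for some $\tau > 0$, and the Fusco--Maggi--Pratelli quantitative isoperimetric inequality then yields, after a translation, $|E \triangle B| \leq C \gamma^{\tau/2}$. Next, $E$ is an almost-minimizer of the perimeter in the $(\Lambda, r_0)$ sense with $\Lambda = C\alpha$, because local perturbations of $E$ change $P_K$ by at most $C|E \triangle F|$ (using boundedness of $E$ and local integrability properties of $K$ away from the singularity). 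The De~Giorgi-Almgren regularity theorem then gives $\partial E \in C^{1,\beta}$ outside a singular set of Hausdorff dimension $\leq n - 8$; the assumption $n \leq 7$ is precisely what ensures this singular set is empty, so $\partial E$ is globally smooth. Combined with the $L^1$-closeness, this allows the parameterization
$$\partial E = \{(1+\varphi(\omega))\omega : \omega \in S^{n-1}\}, \qquad \|\varphi\|_{C^1(S^{n-1})} \to 0 \text{ as } \gamma \to 0.$$
After an appropriate translation, the volume constraint makes $|\int_{S^{n-1}} \varphi \,d\sigma|$ and $|\int_{S^{n-1}} \varphi(\omega)\,\omega\,d\sigma|$ both $O(\|\varphi\|_{L^2}^2)$.

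Fuglede-type stability. Fuglede's nearly-spherical set theorem gives
$$P(E) - P(B) \geq c_n \|\varphi\|_{H^1(S^{n-1})}^2.$$
Since $K$ is radially non-increasing, Riesz rearrangement yields $P_K(E) \geq P_K(B) \geq 0$, so a matching quadratic \emph{upper} bound is needed. This is obtained by Taylor-expanding $P_K$ around the ball: the first variation vanishes on volume-preserving perturbations by the radial symmetry of $K$, and the second variation is dominated by a bilinear form of the type $\int_{S^{n-1}}\int_{S^{n-1}} K(r_1(\omega-\omega'))(\varphi(\omega)-\varphi(\omega'))^2 d\sigma\,d\sigma'$ which, thanks to $K(z)\leq|z|^{-n-s}$, is controlled by $C\|\varphi\|_{H^{(1+s)/2}(S^{n-1})}^2 \leq C\|\varphi\|_{H^1(S^{n-1})}^2$ (using $(1+s)/2 \leq 1$). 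Consequently
$$\mathcal{J}_\alpha(E) - \mathcal{J}_\alpha(B) \geq \bigl(c_n - C\alpha\, s(1-s)\bigr)\|\varphi\|_{H^1(S^{n-1})}^2,$$
which is strictly positive for $\gamma_1$ small enough unless $\varphi \equiv 0$, forcing $E = B$ up to translation.

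The principal technical difficulty is establishing this quadratic upper bound on $P_K(E) - P_K(B)$ from the sole pointwise bound $K(x) \leq |x|^{-n-s}$, since $K$ need not be integrable near the origin nor positive-definite in general. One has to carefully split the double integral defining $P_K$ into near-diagonal and far-field contributions, use the cancellations coming from the volume constraint to eliminate the first-order term, and dominate the resulting quadratic form by a Sobolev norm that the Fuglede deficit for the perimeter can absorb. The dimensional restriction $n \leq 7$, by contrast, plays a purely structural role in the regularity step via the De Giorgi-Almgren theorem.
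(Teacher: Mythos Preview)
Your overall architecture matches the paper's proof closely: establish that the minimizer is a $C^{1,\beta}$ graph over $\partial B_1$, invoke Fuglede's lower bound $P(E)-P(B)\geq c\|\varphi\|_{H^1}^2$, establish a matching quadratic upper bound $P_K(E)-P_K(B)\leq C[\varphi]_{H^{(1+s)/2}}^2 + C\|\varphi\|_{L^2}^2$ (this is exactly the paper's Proposition~\ref{prop:1+snorm}), and absorb. Your closing remark about what makes the quadratic upper bound on $P_K$ delicate is on target.

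There is, however, a genuine gap in your regularity step. You assert that $E$ is a $(\Lambda,r_0)$-almost minimizer of perimeter with $\Lambda=C\alpha$ because ``local perturbations of $E$ change $P_K$ by at most $C|E\triangle F|$.'' This is false under the sole hypothesis \eqref{eq:assK}: when $K(z)=|z|^{-n-s}$ (which is permitted), $K$ is not integrable and no Lipschitz bound $|P_K(E)-P_K(F)|\leq C|E\triangle F|$ is available. What \emph{is} true is
\[
|P_K(E)-P_K(F)| \;\leq\; P_K(E\triangle F)\;\leq\; P_s(E\triangle F)\;\leq\; \frac{C}{s(1-s)}\,P(E\triangle F)^s\,|E\triangle F|^{1-s},
\]
using \eqref{eq:GN}. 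Combined with the a priori perimeter density bound of Corollary~\ref{P(E,B_r)bdd} and a careful handling of the volume constraint (rescaling the competitor), this gives the Tamanini-type excess estimate
\[
\Psi(E,B_\rho(x)) \;:=\; P(E,B_\rho)-\inf\{P(F,B_\rho):\,E\triangle F\subset B_\rho\}\;\leq\; C\rho^{\,n-s},
\]
which yields $C^{1,(1-s)/2}$ regularity of $\partial^*E$ by Tamanini's theorem. This is how the paper proceeds (Theorem~\ref{regularity}); the exponent $n-s$ rather than $n$ is what singles out the H\"older exponent $(1-s)/2$, and the restriction $n\leq 7$ then eliminates the singular set exactly as you say. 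So your strategy survives, but the specific almost-minimality claim and its justification do not; you must replace them by the weaker H\"older-type excess bound coming from the interpolation inequality.
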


\medskip

The next step is to determine what happens when $ \alpha\, m^{\frac{1-s}{n}}$ is large. 
In view of our previous results, there are two questions: Does the functional $\J_\alpha$ still have a minimizer when  $ \alpha\, m^{\frac{1-s}{n}}$ is large? and if so, is this minimizer still a ball?
We start with the second part of this question, which is much simpler to answer.
First we prove that when $ \alpha\, m^{\frac{1-s}{n}}$ is large, the global minimizer (if it exists) cannot be the ball, and in fact that this global minimizer must (in some sense) develop fingers. 
We note that such a result cannot hold in the general framework considered so far, since we can take $K=0$ (in which case the ball is the unique minimizer for all $\alpha$). We thus assume that $K$ satisfy the lower bound:
\begin{equation}\label{eq:assK2} 
K(x)\geq \frac{c}{|x|^{n+s}} \qquad \mbox{ when } |x|\geq \rho_0
\end{equation}
for some $c\geq 0$.
\begin{theorem}\label{thm:nonball}
Let $K(x)$ be a radially symmetric, non-increasing function satisfying \eqref{eq:assK} and \eqref{eq:assK2} for some $s\in(0,1)$, and let $E$ be a minimizer of \eqref{eq:min}. 
There exists a constant $c>0$ depending only on $n$ such that if 
$$ \rho\geq \max\{4\rho_0, (cs(1-s) \alpha)^\frac{-1}{1-s}\}$$
then
$$
|  B_\rho(x)\setminus E | >0 \quad \mbox{ for all $x\in\R^n$.}
$$
In particular there exists $\gamma_2>0$ depending only on $n$, $s$ such that if
$$
\alpha\, m^{\frac{1-s}{n}} \geq \max \{\gamma_2, c \alpha \rho_0^{1-s}\} 
$$
then the ball of volume $m$ cannot be a global minimizer of \eqref{eq:min}. 
\end{theorem}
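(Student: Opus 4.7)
The plan is to argue by contradiction: assume $|B_\rho(x_0)\setminus E|=0$ for some $x_0\in\R^n$, so (up to a null set) $B_\rho(x_0)\subset E$, and construct a competitor $E'$ of volume $m$ with $\J_\alpha(E')<\J_\alpha(E)$. The natural construction is to cut a smaller ball out of the deep interior of $E$ and drop it far away in $E^c$: fix $r:=\rho/4$ and choose $y\in\R^n$ with $\dist(y,E)$ very large, and set
$$E' := (E\setminus B_r(x_0)) \cup B_r(y).$$
Since $B_r(x_0)\subset B_\rho(x_0)\subset E$ lies at distance $3\rho/4$ from $\partial E$ and $B_r(y)$ has positive distance from $E\setminus B_r(x_0)$, the perimeter adds cleanly: $P(E')=P(E)+2n\omega_n r^{n-1}$, and $|E'|=|E|$.

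For the nonlocal term, apply \eqref{eq:PKAB} to both decompositions $E = (E\setminus B_r(x_0))\cup B_r(x_0)$ and $E' = (E\setminus B_r(x_0))\cup B_r(y)$. The self-terms $P_K(B_r(x_0))=P_K(B_r(y))$ cancel by translation invariance of $K$, leaving
$$P_K(E')-P_K(E) = 4(I_1-I_2),\quad I_1 := \int_{E\setminus B_r(x_0)}\int_{B_r(x_0)}\! K(\xi-\eta)\,d\eta\,d\xi,$$
with $I_2$ the analogous quantity using $B_r(y)$. Because $|E|=m<\infty$ and $K(z)\leq|z|^{-(n+s)}$, a crude bound gives $I_2\leq \omega_n r^n m\,\dist(y,E)^{-(n+s)}\to 0$ as $\dist(y,E)\to\infty$ (such $y$ exists since $E^c$ has infinite measure). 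So it suffices to bound $I_1$ from below, and here the assumption $B_\rho(x_0)\subset E$ together with \eqref{eq:assK2} does the work: restricting $\xi$ to the annulus $B_\rho(x_0)\setminus B_{3\rho/4}(x_0)\subset E\setminus B_r(x_0)$ and $\eta$ to $B_{\rho/4}(x_0)$ forces $\rho/2\leq|\xi-\eta|\leq 5\rho/4$, so provided $\rho\geq 4\rho_0$ (ensuring $\rho/2\geq \rho_0$), \eqref{eq:assK2} yields $K(\xi-\eta)\gtrsim \rho^{-(n+s)}$; integrating the two indicator volumes gives $I_1 \geq C_n\, c\, \rho^{n-s}$.

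Putting it all together,
$$\J_\alpha(E')-\J_\alpha(E) \;=\; 2n\omega_n r^{n-1}\; -\; 4\alpha s(1-s)(I_1-I_2)$$
becomes strictly negative once $\rho^{1-s}$ exceeds a universal multiple of $(c\alpha s(1-s))^{-1}$, which is exactly the second threshold in the hypothesis. This contradicts the minimality of $E$ and establishes the density statement. For the final assertion, apply the density result to the ball $B_R$ of volume $m$ with $R=(m/\omega_n)^{1/n}$: taking $x_0$ its center and $\rho=R$ gives $B_\rho(x_0)=B_R\subset E$, violating the strict positivity. Thus $B_R$ cannot be a global minimizer whenever $R\geq\max\{4\rho_0,(cs(1-s)\alpha)^{-1/(1-s)}\}$; rephrasing via $R^{1-s}=(m/\omega_n)^{(1-s)/n}$ and multiplying through by $\alpha$ yields precisely $\alpha m^{(1-s)/n}\geq\max\{\gamma_2,\,c\alpha\rho_0^{1-s}\}$ with $\gamma_2=\gamma_2(n,s)$.

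The key obstacle is choosing the auxiliary radius $r$ with the correct scaling: fixing $r$ independently of $\rho$ would only yield a threshold of the form $\rho^s\gtrsim 1$, which is far too weak, whereas $r\propto\rho$ pits the perimeter cost $r^{n-1}\sim\rho^{n-1}$ against the nonlocal gain $\sim\rho^{n-s}$ and leaves the crucial factor $\rho^{1-s}$ available to absorb the parameter $\alpha$. A secondary technicality is that the minimizer $E$ is not known a priori to be bounded, but finite mass $m$ combined with the integrable decay of $K$ at infinity is enough to make $I_2$ arbitrarily small by choosing $y$ in a sufficiently distant hole of $E^c$.
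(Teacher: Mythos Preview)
Your proof is correct and follows essentially the same route as the paper: cut out a ball of radius proportional to $\rho$ from the interior of $B_\rho(x_0)\subset E$, translate it far away, use \eqref{eq:PKAB} twice to isolate the cross-interaction term, and bound that term from below on an annulus using \eqref{eq:assK2}. The paper excises $B_{\rho/2}(x_0)$ while you take $B_{\rho/4}(x_0)$, but the scaling is the same and the resulting inequality $\rho^{n-1}\lesssim \alpha s(1-s)\rho^{n-s}$ is identical.

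One small remark: your justification that a suitable $y$ exists (``since $E^c$ has infinite measure'') is not quite enough on its own to guarantee $|B_r(y)\cap E|=0$, which you need for $|E'|=m$. The paper sidesteps this because it has already shown (Proposition~\ref{lem:indecomposable}) that any minimizer is bounded, so one can simply send $y$ to infinity. Citing that boundedness result here closes the gap cleanly.
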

The first part of the theorem says that the largest ball that can fit inside the set $E$ has radius $c \max\{\rho_0,\alpha^\frac{-1}{1-s}\}$ for some $c$ depending only on $n$ and $s$ (note that when $P_K=P_s$, we can take $\rho_0=0$). 
This shows that when $m$ increases with $\alpha$ fixed, minimizers, if they exists, have a maximum width determined by $\rho_0$ and $\alpha$.
Note that the non-degeneracy estimate \eqref{eq:ndg} suggests that this is optimal and that the fingers have width comparable to $\alpha^\frac{-1}{1-s}$.

\medskip

From the point of view of many applications (in particular the cell motility applications discussed in this paper), global minimizers of $\J_\alpha$ are not necessarily the most relevant objects one would like to study. In particular, the stationary solutions obtained by gradually increasing the value of $\alpha$ (or obtained as the long time limit for some related evolution problem) might be a stable critical point (or a local minimizer) but not a global energy minimizer.
In the theorem below, 
we characterize the range of values of $\alpha\, m^{\frac{1-s}{n}}$ for which the ball is a local vs. global minimizer for the functional
$$ \F_{s,\alpha}(F) = P(F)-\alpha s(1-s)P_s(F),$$
(so when $P_K=P_s$).
More precisely, we prove:
%Finally, we further characterize the role of the ball.
%We note that $E$ is a minimizer $\F_{s,\alpha}$ with the constraint $|E|=m$, if and only if $E_m=\frac{1}{m^{1/n}}E$ is a minimizer of energy $\F_{s,\beta}$ under the constraint $|F|=1$ with 
%$\beta=\alpha m ^{\frac{1-s}{n}}$.
%So the following theorem identifies the values of $\alpha$ and $m$ for which the  ball of volume $m$ is a global or local minimizer of $\F_{s,\alpha}$:
\begin{theorem}\label{thm:beta1beta2}
There exists $\gamma_1^*,\gamma_2^*>0$ depending only on $n$ and $s$, and satisfying $\gamma_1^*<\gamma_2^*$ such that 
\begin{itemize}
	\item[(i)]
	The ball of volume $m$ is a global minimizer of $\F_{s,\alpha}$ with $|E|=m$ if and only if $\alpha\, m^{\frac{1-s}{n}} \leq\gamma^*_1$.
	\item[(ii)] When $\alpha\, m^{\frac{1-s}{n}} <\gamma_2^*$, the ball of volume $m$ is a local minimizer of $\F_{s,\alpha}$ with $|E|=m$, and when $\alpha\, m^{\frac{1-s}{n}} >\gamma_2^*$, the ball is not a local minimizer. 
\end{itemize}
\end{theorem}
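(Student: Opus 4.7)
By the scaling \eqref{eq:scaling}, it suffices to work with the one-parameter functional $\F_{s,\gamma}$ on unit-volume sets, where $\gamma=\alpha\, m^{(1-s)/n}$; let $B$ denote the unit-volume ball and $R_n=\omega_n^{-1/n}$ its radius. For Part (i), observe that for any admissible competitor $E$, the map
\[
\gamma\mapsto \F_{s,\gamma}(E)-\F_{s,\gamma}(B)=[P(E)-P(B)]-\gamma\, s(1-s)[P_s(E)-P_s(B)]
\]
is affine with non-positive slope, since the fractional isoperimetric inequality gives $P_s(B)\leq P_s(E)$. Consequently, the set of $\gamma\geq 0$ for which $B$ is a global minimizer of $\F_{s,\gamma}$ among unit-volume sets is of the form $[0,\gamma_1^*]$, and it is closed by passing to the limit in the minimality inequality. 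Theorem \ref{thm:ball} yields $\gamma_1^*>0$, and Theorem \ref{thm:nonball} yields $\gamma_1^*<\infty$, proving (i).

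For Part (ii), I would carry out a Fuglede-type second variation analysis, following the nonlocal extension of Figalli-Fusco-Maggi-Millot-Morini. Parametrize nearby sets as normal graphs $E_\phi$ over $\partial B$ with height $\phi\in C^1(S^{n-1})$, decompose $\phi=\sum_{k\geq 0}\phi_k Y_k$ in $L^2$-orthonormal spherical harmonics of degree $k$, and use the volume constraint together with translation invariance to eliminate the $k=0,1$ modes. Both second variations diagonalize, yielding
\[
\delta^2 P(B)[\phi]=\sum_{k\geq 2}a_k\phi_k^2,\qquad \delta^2 P_s(B)[\phi]=\sum_{k\geq 2}b_k^{(s)}\phi_k^2,
\]
with $a_k=R_n^{n-3}\bigl(k(k+n-2)-(n-1)\bigr)>0$ for $k\geq 2$ and explicit positive $b_k^{(s)}$ (computable through Gegenbauer expansions of the kernel restricted to $S^{n-1}$; positivity for $k\geq 2$ follows from the sharp fractional isoperimetric inequality and its rigidity). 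Setting
\[
\gamma_2^*:=\inf_{k\geq 2}\frac{a_k}{s(1-s)\, b_k^{(s)}},
\]
for $\gamma<\gamma_2^*$ the quadratic form $\delta^2\F_{s,\gamma}(B)$ is uniformly positive definite, and a standard reduction of $L^1$-close perturbations to near-normal graphs with controlled $C^1$-norm (Cicalese-Leonardi type) yields $L^1$-local minimality. For $\gamma>\gamma_2^*$, a test perturbation $\phi=t Y_{k_0}+t^2\psi$ along the destabilized mode (with $\psi$ absorbing the volume constraint) yields $\F_{s,\gamma}(E_\phi)<\F_{s,\gamma}(B)$ for small $t$, so $B$ is not a local minimizer.

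The bound $\gamma_1^*\leq\gamma_2^*$ is immediate (global implies local). For the strict separation $\gamma_1^*<\gamma_2^*$, I would exploit the degenerate critical structure at $\gamma=\gamma_2^*$: the kernel of $\delta^2\F_{s,\gamma_2^*}(B)$ is spanned by spherical harmonics of a specific degree $k_0\geq 2$. Expanding $\F_{s,\gamma_2^*}(E_\phi)-\F_{s,\gamma_2^*}(B)$ along $\phi=t Y_{k_0}+t^2\psi$ (with $\psi$ chosen to preserve the volume constraint exactly and to be orthogonal to the kernel) to fourth order, I would verify that the leading coefficient is strictly negative by computing the third and fourth variations of $P$ and $P_s$ in Gegenbauer coefficients. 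This produces a set $E_*$ with $\F_{s,\gamma_2^*}(E_*)<\F_{s,\gamma_2^*}(B)$, and combined with the monotonicity of Part (i) this forces $\gamma_1^*<\gamma_2^*$. The principal obstacle is precisely this fourth-order sign computation; an alternative route is to exhibit an explicit competitor (for instance, an ellipsoid of carefully tuned eccentricity or a dumbbell shape reminiscent of the finger structure from Theorem \ref{thm:nonball}) beating the ball at some $\gamma<\gamma_2^*$.
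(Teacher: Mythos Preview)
Your arguments for (i) and (ii) are essentially the paper's: the monotonicity via the fractional isoperimetric inequality for (i), and the spherical-harmonic diagonalization of the second variation for (ii), are exactly what the paper does. The paper goes one step further and shows that the infimum defining $\gamma_2^*$ is attained at $k=2$, yielding the closed form
\[
\gamma_2^*=\frac{n+1}{s(n+s)}\,\frac{P(B_1)\,|B_1|^{\frac{1-s}{n}}}{s(1-s)P_s(B_1)},
\]
and this explicit value is what makes the strict inequality $\gamma_1^*<\gamma_2^*$ tractable.

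The genuine gap in your proposal is the separation $\gamma_1^*<\gamma_2^*$. Your primary plan is a fourth-order expansion along the critical mode at $\gamma=\gamma_2^*$, but you yourself flag the sign of the quartic coefficient as the ``principal obstacle,'' and there is no a priori reason it should come out negative; this is not a step one can leave as a routine computation. Your fallback of an ellipsoid or dumbbell competitor is in the right spirit but too vague to conclude. The paper avoids all of this with a clean, quantitative comparison: take the competitor consisting of two balls of volume $m/2$ sent to infinity, which gives the explicit upper bound
\[
\gamma_1^*\le \frac{2^{1/n}-1}{2^{s/n}-1}\,\frac{P(B_1)\,|B_1|^{\frac{1-s}{n}}}{s(1-s)P_s(B_1)},
\]
and then checks by an elementary concavity argument that $\dfrac{2^{1/n}-1}{2^{s/n}-1}<\dfrac{n+1}{s(n+s)}$ for all $s\in(0,1)$ and $n\ge 2$. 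This is both simpler and sharper than a local expansion, and it is only possible because the paper first identifies $k_0=2$ and hence the explicit $\gamma_2^*$.
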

The fact that there is a non trivial range of values of $\alpha\, m^{\frac{1-s}{n}} $ for which the ball is a local but not a global minimizer comes from the following explicit expression for $\gamma_2^*$:
$$\gamma_2^*=\frac{n+1}{s(n+s)}\frac{P(B_1)|B_1|^{\frac{1-s}{n}} }{s(1-s)P_s(B_1)}$$
(which results from a careful computation of the second variation of the energy)
and the bound
$$ \gamma^*_1 \leq \frac{2^{\frac{1}{n}}-1}{2^{\frac{s}{n}}-1}\frac{P(B_1)|B_1|^{\frac{1-s}{n}}}{s(1-s)P_s(B_1)} $$
(which is obtained by comparing the energy of $B_1$ with the energy of the set made of two balls $B_{\frac{1}{2^{1/n}}}$ far away from each other).

The result implies that a bifurcation phenomena takes place when $\gamma=\gamma_2^*$.
While we do not pursue this in this paper, it is indeed possible to show, by a Crandall-Rabinovitch type argument, that a branch of non-radially symmetric stationary solutions appear  for $\gamma=\gamma_2^*$. This branch of solution has a lower energy than the ball and is thus important in application, even if the fact that $\gamma_2^*>\gamma_1^*$ implies that this branch is not a global minimizer for $\gamma$ close to $\gamma_2^*$ (two balls of half the size far away from each other have a lower energy).

\medskip

Finally, we come back to our first question: the existence (or non existence) of  global minimizer  
 when  $ \alpha\, m^{\frac{1-s}{n}}$ is large.
First, we recall that \eqref{eq:Jlower} implies that any minimizing sequence $\{F_k\}_{k\in \N}$ 
satisfies 
$P(F_k) \leq C$
 for some constant $C(m,\alpha)$. In particular $\chi_{F_k}$ is bounded in $BV(\R^n)$ and the obstacle to the existence of a minimizer is the issue of decomposability:
 
A set of finite perimeter $F$ is decomposable if there exists a partition $F=F_1\cup F_2$ with 
$|F_i|\neq 0$ and $P(F)= P(F_1)+P(F_2)$. Otherwise the set is indecomposable (which is the measure theoretic equivalent of the notion of a connected set).
In Proposition \ref{lem:indecomposable} we will prove that any global minimizer of $\J_\alpha$ must be bounded and indecomposable since otherwise we can decrease the energy by moving two components away from each others (see \eqref{eq:PKAB}).

%. Indeed, this keeps the perimeter constant while increasing the $K-$perimeter since we have:
%$$
%P_K(F_1\cup F_2)  = P_K(F_1) + P_K(F_2) - 4 \int_{F_1} \int_{F_2} K(x-y)\, dx \, dy \qquad \mbox{ if $ |F_1\cup F_2|=0$.}
%$$

In many applications, splitting of the minimizing sequence into smaller sets can be an interesting feature and once the components are far enough from each others we can typically neglect the interactions and treat the resulting set as a minimizer of the problem. Mathematically, this can be done by assuming that $K(x)=0$ for $|x|$ large enough, or by
replacing the original minimization problem with the following:
Given a collection $\mathbb E = \{E_i\}_{i\in I}$ of sets of finite perimeter, let
$$ \G_\alpha(\mathbb E)  = \sum_{i\in I} \J_\alpha (E_i), \qquad (\mbox{ and } \G_{s,\alpha}(\mathbb E)  = \sum_{i\in I} \F_{s,\alpha} (E_i))$$
and $|\mathbb E| = \sum_{i\in I}|E_i|$. We then consider the minimization problem:
\begin{equation}\label{eq:minG}
\inf \left\{ \G_\alpha(\mathbb E) \, ;\, |\mathbb E| = m\right\} .
\end{equation}
For that problem we can prove that a minimizer exists for all $m$ and all $\alpha$.
More precisely, we have the following result:
\begin{theorem}\label{thm:G}
Let $K(x)$ be a radially symmetric, non-increasing function satisfying \eqref{eq:assK}. 
Then the minimization problem \eqref{eq:minG} has a global minimizer $\mathbb E = \{E_i\}_{i\in I} $ for all $m>0$ and $\alpha\geq 0$. Furthermore, there exists $C$ depending only on $s$ and $n$ such that any global minimizer $\mathbb E$ has at most $C \alpha^{\frac{n}{1-s}} m$ components with positive Lebesgue measure, each component has diameter at most 
$$\mathrm{diam}(E_i)\leq  C \max\{1,\alpha^{\frac{n}{1-s}} |E_i|\} |E_i|^{1/n},$$
 and satisfies
$$ P(E_i) \leq C\max\left\{  |E_i|^{\frac{n-1}{n}}, \alpha^{\frac{1}{1-s}} |E_i| \right\}$$
for some constant $C$ depending only on $n$ and $s$.

\item The minimization problem \eqref{eq:min} has a global minimizer if and only if the minimization problem \eqref{eq:minG} has at least one global minimizer with only one non-empty component.
\end{theorem}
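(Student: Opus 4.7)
I would start from a minimizing sequence $\mathbb E_k = \{E^k_i\}$ for \eqref{eq:minG}. Observing that decomposable components can always be replaced by their indecomposable pieces at no cost to $\G_\alpha$ (by \eqref{eq:PKAB}, splitting $A\cup B$ with $P(A\cup B)=P(A)+P(B)$ into $A$ and $B$ only makes the nonlocal interaction contribution non-positive, so $\J_\alpha(A)+\J_\alpha(B)\leq \J_\alpha(A\cup B)$), I may assume each $E^k_i$ is indecomposable. Applying \eqref{eq:Jlower} componentwise and summing gives
$$ \tfrac{1}{2}\sum_i P(E^k_i) \leq \G_\alpha(\mathbb E_k) + (n\omega_n\alpha)^{\frac{1}{1-s}}m \leq C,$$
uniformly in $k$. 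I would then translate each component so that a point of density one lies at the origin and extract a BV-convergent subsequence; mass that escapes during this step is recovered iteratively via a concentration-compactness procedure, producing additional components of the limit collection $\mathbb E$. Lower semicontinuity of $P$ and of $P_K$ (the latter by Fatou applied to the double integral, using \eqref{eq:assK}) combined with mass conservation yield $\G_\alpha(\mathbb E) \leq \liminf_k \G_\alpha(\mathbb E_k)$ with $|\mathbb E|=m$.

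\textbf{Perimeter bound and number of components.} For any minimizer $\mathbb E$ and any component $E_i$, replacing $E_i$ by a ball $B_{|E_i|}$ placed arbitrarily far from the other components is an admissible perturbation, so minimality gives $\J_\alpha(E_i)\leq \J_\alpha(B_{|E_i|})\leq P(B_{|E_i|}) = n\omega_n^{1/n}|E_i|^{(n-1)/n}$. Combined with \eqref{eq:KP} applied to $E_i$, this yields $P(E_i) \leq C|E_i|^{(n-1)/n} + C\alpha^{1/(1-s)}|E_i|$, the stated perimeter bound. To bound the number of components I would show that $|E_i|\geq v_0:=c\alpha^{-n/(1-s)}$: if some $|E_i|<v_0$, then \eqref{eq:KP} together with the classical isoperimetric inequality gives $\J_\alpha(E_i)\geq c'|E_i|^{(n-1)/n}>0$, while a competitor obtained by deleting $E_i$ and appending a ball of volume $|E_i|$ to the boundary of another component (at isoperimetric cost bounded by $P(B_{|E_i|})$) strictly decreases $\G_\alpha$, contradicting minimality. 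Hence the number of components is at most $m/v_0 = C\alpha^{n/(1-s)}m$.

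\textbf{Diameter bound.} For $|E_i|$ below the threshold $v_0$ (so $\alpha^{n/(1-s)}|E_i|\lesssim 1$), the perimeter bound reads $P(E_i)\leq C|E_i|^{(n-1)/n}$, so $E_i$ is nearly isoperimetric and the quantitative isoperimetric inequality of Fusco--Maggi--Pratelli forces $E_i$ to be $L^1$-close to a ball, giving $\mathrm{diam}(E_i)\leq C|E_i|^{1/n}$. For $|E_i|$ exceeding $v_0$, I would run a slicing/splitting argument: if $d:=\mathrm{diam}(E_i)$ were much larger than $C\alpha^{n/(1-s)}|E_i|^{1+1/n}$, a pigeonhole on parallel hyperplanes produces a section of $\H^{n-1}$-measure much smaller than the nonlocal gain obtainable by separating the two resulting pieces; the pieces each have mass of order $|E_i|$ and lie at distance of order $d$, so the interaction $\int K\chi_A\chi_B$ is of order $|E_i|^2/d^{n+s}$, and equating the perimeter cost of the cut with $4\alpha s(1-s)\int K\chi_A\chi_B$ yields the stated bound. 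Splitting along such a cut and translating the pieces apart would strictly decrease $\G_\alpha$, contradicting minimality.

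\textbf{Correspondence with \eqref{eq:min}.} I would show that $\inf\J_\alpha = \inf\G_\alpha$. One inequality is trivial since any single set is a one-component collection. For the other, given any collection $\{E_i\}$, pick distinct unit vectors $\xi_i$ and let $F_R := \bigcup_i(E_i+R\xi_i)$; for $R$ large enough the translates are pairwise disjoint and non-touching so $P(F_R)=\sum_i P(E_i)$, and by \eqref{eq:PKAB} together with the decay $K(x)\leq |x|^{-n-s}$ the cross-interaction terms vanish in the limit. Hence $\J_\alpha(F_R)\to \G_\alpha(\{E_i\})$ as $R\to\infty$, giving $\inf\J_\alpha\leq \G_\alpha(\{E_i\})$ for every collection and therefore $\inf\J_\alpha\leq \inf\G_\alpha$. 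The two infima coincide, so \eqref{eq:min} is attained if and only if \eqref{eq:minG} admits a minimizer with only one non-empty component. The main technical obstacles I expect are the existence step (specifically, preventing mass loss during the iterative BV extraction) and the lower bound $|E_i|\geq c\alpha^{-n/(1-s)}$, whose proof requires a careful geometric redistribution argument tied to the GN-type inequality \eqref{eq:GN}.
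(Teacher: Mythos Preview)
Your existence and correspondence arguments are structurally sound, but note a sign issue: you invoke lower semicontinuity of $P_K$ via Fatou, yet for $\J_\alpha=P-\alpha s(1-s)P_K$ to be lower semicontinuous you need $P_K$ to be \emph{upper} semicontinuous (or continuous) along the extracted sequence. Fatou on the double integral gives $P_K(E)\leq\liminf P_K(F_k)$, which points the wrong way. The fix is Lemma~\ref{lem:CVPK}: under strong $L^1$ convergence with bounded perimeter, $P_K$ actually converges. Your concentration-compactness route to existence is also genuinely different from the paper's: rather than extract directly, the paper adds a confining term $\beta\int|x|\chi_E\,dx$, obtains a minimizer of $\G_{\alpha,\beta}$ for each $\beta>0$ by soft compactness, and then passes to the limit $\beta\to0$ using a non-degeneracy estimate that is uniform in small $\beta$. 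This sidesteps precisely the mass-escape difficulty you identify as the main obstacle.

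The real gap is in your diameter bound, in both regimes. For small $|E_i|$, the quantitative isoperimetric inequality gives only $L^1$-closeness to a ball, which does \emph{not} control $\mathrm{diam}(E_i)$: a ball with a long thin tentacle of negligible volume and perimeter is $L^1$-close to the ball but has arbitrarily large diameter. For large $|E_i|$, your slicing argument is confused about distances: a hyperplane cut produces two pieces that are \emph{adjacent}, not separated by $d=\mathrm{diam}(E_i)$, so the cross term $\int K\chi_A\chi_B$ is not of order $|E_i|^2/d^{n+s}$ and there is no reason it dominates the cut cost. The paper's mechanism is different and is the missing ingredient: it proves a uniform lower density bound $|E_i\cap B_r(x_0)|\geq cr^n$ for every $x_0\in\partial E_i$ and every $r$ below an explicit threshold depending on $\alpha$ and $|E_i|$ (Proposition~\ref{prop:NDG1}), by comparing $E_i$ with $E_i\setminus B_r(x_0)$ after a volume-restoring dilation. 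The diameter bound then follows from a covering argument along a chain of balls joining two extremal points of the indecomposable set $E_i$ (Corollary~\ref{cor:diam}). Without an analogue of this density estimate your diameter claim does not go through.
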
 
The question of whether a global minimizer of $\J_\alpha$ exists when $\alpha\, m^{\frac{1-s}{n}}$ can thus be reframed as a question on the number of non trivial components of the minimizers of $\G_\alpha$.
\medskip

When $P_K=P_s$,  we can prove the following proposition which relates our minimization problem 
to the interpolation inequality \eqref{eq:GN}: 
\begin{proposition}\label{prop:as}
Let $\mu_{n,s}\geq 0$ be the optimal constant in the interpolation equality \eqref{eq:GN}, defined by 
\begin{equation}\label{eq:opt2}
\mu_{n,s}^{-1} = \inf_{|E|=1}  \frac{P(E)^s}{s(1-s) P_s(E)} .
\end{equation}
Then for all $\alpha>0$, we have:
\[
\inf_{|F|=m} \F_{s,\alpha}(F) = \inf_{|\mathbb F|=m} \G_{s,\alpha}(\mathbb F)= -(1-s)s^{\frac{s}{1-s}} (\alpha\mu_{n,s})^{\frac{1}{1-s}}m+o(m)\qquad  \mbox{when $m\to \infty$.}
\]
\end{proposition}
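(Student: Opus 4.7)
Set $C := (1-s)s^{s/(1-s)}(\alpha\mu_{n,s})^{1/(1-s)}$; the claim amounts to showing that both $\inf_{|F|=m}\F_{s,\alpha}(F)$ and $\inf_{|\mathbb F|=m}\G_{s,\alpha}(\mathbb F)$ equal $-Cm + o(m)$ as $m\to\infty$. Since any admissible $F$ is a one-component admissible collection, trivially $\inf\G_{s,\alpha} \leq \inf\F_{s,\alpha}$, so it suffices to establish the matching lower bound for $\G$ and upper bound for $\F$.

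\textbf{Lower bound.} For any set of finite perimeter $E$, \eqref{eq:opt2} gives $\alpha s(1-s)P_s(E) \leq \alpha\mu_{n,s}P(E)^s|E|^{1-s}$, hence
\[\F_{s,\alpha}(E) \geq P(E) - \alpha\mu_{n,s}P(E)^s|E|^{1-s}.\]
Viewing the right-hand side as a function of $p:=P(E)\geq 0$, a single-variable minimization locates the critical point $p^* = (\alpha s\mu_{n,s})^{1/(1-s)}|E|$ and yields the value $-C|E|$, so $\F_{s,\alpha}(E) \geq -C|E|$. Applying this pointwise estimate to each component of any admissible $\mathbb F$ and summing gives $\G_{s,\alpha}(\mathbb F) \geq -Cm$.

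\textbf{Upper bound.} For each small $\delta > 0$ I would pick a near-optimizer $E_\delta$ of \eqref{eq:opt2} with $|E_\delta|=1$ and $s(1-s)P_s(E_\delta) \geq (1-\delta)\mu_{n,s}P(E_\delta)^s$, then take $N$ disjoint translates of the scaled set $E_r := rE_\delta$ (recall $P(E_r)=r^{n-1}P(E_\delta)$ and $P_s(E_r)=r^{n-s}P_s(E_\delta)$). Subject to $Nr^n=m$, the collection energy equals
\[\G_{s,\alpha}(\{E_r+x_i\}) = \frac{m}{r}\bigl[P(E_\delta) - \alpha s(1-s)r^{1-s}P_s(E_\delta)\bigr].\]
Minimizing in $r$ gives $r^{1-s}=P(E_\delta)/[\alpha s^2(1-s)P_s(E_\delta)]$, and substituting back, the energy becomes $-(1-\delta)^{1/(1-s)}Cm$. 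To pass to the single-set functional $\F_{s,\alpha}$, spread the $x_i$ at mutual distance $R$: then $P(\bigcup_i(E_r+x_i))=NP(E_r)$, and by iterating \eqref{eq:PKAB},
\[P_s\Bigl(\bigcup_i(E_r+x_i)\Bigr) = NP_s(E_r) - 4\sum_{i<j}\int_{\R^n}\int_{\R^n}\frac{\chi_{E_r+x_i}(x)\chi_{E_r+x_j}(y)}{|x-y|^{n+s}}\,dxdy,\]
so $\F_{s,\alpha}(F) = \G_{s,\alpha}(\{E_r+x_i\}) + O(\alpha N^2 r^{2n}R^{-(n+s)})$, a correction made $o(m)$ by choosing $R=R_m\to\infty$ fast enough. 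The integer requirement on $N$ and the exact volume constraint $|F|=m$ are handled by taking $N=\lfloor m/r^n\rfloor$ and appending one partial copy of volume at most $r^n$, which contributes $O(1)$ to the energy.

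\textbf{Main difficulty.} The argument is really a duality between the Gagliardo-Nirenberg inequality as the lower bound and its near-saturating test sets as the source of the upper-bound construction, so no conceptual obstacle arises. The delicate point is purely bookkeeping: for each fixed $\delta$ the construction yields $\limsup_{m\to\infty}\inf_{|F|=m}\F_{s,\alpha}(F)/m \leq -(1-\delta)^{1/(1-s)}C$, and sending $\delta\to 0^+$ concludes; the interaction-term estimate based on \eqref{eq:PKAB} is what links the $\G$ and $\F$ versions.
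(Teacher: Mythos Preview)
Your proof is correct and follows essentially the same strategy as the paper: the lower bound via the Gagliardo--Nirenberg inequality applied componentwise, and the upper bound via many rescaled copies of a near-optimizer for \eqref{eq:opt2}. The only organizational difference is that the paper first invokes Theorem~\ref{thm:G} (existence of a $\G$-minimizer with finitely many bounded components) to conclude $\inf\F_{s,\alpha}=\inf\G_{s,\alpha}$ and then works only with $\G$, whereas you bypass that appeal by constructing the competitor for $\F_{s,\alpha}$ directly and controlling the cross-interaction via \eqref{eq:PKAB}; your route is thus slightly more self-contained, while the paper's parametrization by the integer number $k$ of copies (with the integer defect handled through convexity of $g$) is equivalent to your parametrization by the scale $r$ with a partial copy absorbing the remainder.
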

The proof of this proposition makes use of sets that are almost minimizers for  \eqref{eq:opt2} and it allows us to prove the following result:
\begin{proposition}\label{prop:JGN}
\item[(i)] If there exists a set $E_0$ with $|E_0|=1$ and $P(E_0)<\infty$ for which there is equality in \eqref{eq:opt2}, then
for all $k\in \N$, 
%$$m\in \left\{ k \left(\frac{P(E_0)}{sP_s(E_0)}\right)^{\frac{n}{1-s}}\, ;\,  k\in \N\right\}$$
the set with $k$ components 
$$ \mathbb E_k = \left\{ \left(\frac{m_k}{k}\right)^{\frac{1}{n}}E_0, \dots ,\left(\frac{m_k}{k}\right)^{\frac{1}{n}}E_0 \right\}$$
is a minimizer of $\G_{s,\alpha}$ with mass constraint
$$|\mathbb E_k|=m_k=k \alpha^{-\frac{n}{1-s}} \left(\frac{P(E_0)}{sP_s(E_0)}\right)^{\frac{n}{1-s}}= k \frac{{P(E_0)}^n}{(s\alpha \mu_{n,s})^\frac{n}{1-s}}.
$$ 
\item[(ii)] If there exists a sequence $\{m_k\}_{k=1}^{\infty}$ with $m_k \to \infty$ such that $E_{k}$ are bounded minimizers of $\F_{s,\alpha}$ with volume constraint $|E|=m_k$, then the sets $F_k=m_k^{-\frac{1}{n}}E_k$  is a minimizing sequence for  \eqref{eq:opt2} with
$$   P(F_k)\geq Cm_k^{\frac{1}{n}}\to \infty \mbox{  as $k \to \infty$.}$$ 
\end{proposition}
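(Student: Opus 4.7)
My plan is to combine the sharp Gagliardo-Nirenberg inequality \eqref{eq:GN} with the asymptotic value of the infimum obtained in Proposition~\ref{prop:as}. Applied componentwise to any configuration $\mathbb F=\{F_i\}$ with $|\mathbb F|=m$, the inequality \eqref{eq:GN} yields
\[
\F_{s,\alpha}(F_i) \;\geq\; P(F_i) - \alpha\mu_{n,s}P(F_i)^s |F_i|^{1-s},
\]
and a one-variable minimization of the right-hand side over $P(F_i) \geq 0$ (treating $|F_i|$ as fixed) bounds it below by $-(1-s)s^{s/(1-s)}(\alpha\mu_{n,s})^{1/(1-s)}|F_i|$, with equality iff $P(F_i) = (s\alpha\mu_{n,s})^{1/(1-s)}|F_i|$. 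Summing over $i$ gives
\[
\G_{s,\alpha}(\mathbb F) \;\geq\; -(1-s)s^{s/(1-s)}(\alpha\mu_{n,s})^{1/(1-s)} \, m,
\]
matching exactly the leading term in the asymptotics of Proposition~\ref{prop:as}.

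For part (i), if $E_0$ saturates \eqref{eq:opt2}, then so does every rescaling $rE_0$ by scale invariance of \eqref{eq:GN}. The additional condition $P(rE_0) = (s\alpha\mu_{n,s})^{1/(1-s)}|rE_0|$, combined with the GN-equality $s(1-s)P_s(E_0) = \mu_{n,s}P(E_0)^s$, pins down $r = P(E_0)/(s\alpha\mu_{n,s})^{1/(1-s)}$ and hence $r^n = P(E_0)^n/(s\alpha\mu_{n,s})^{n/(1-s)}$. The $k$-component configuration $\mathbb E_k$ of total mass $m_k$ therefore saturates the above lower bound componentwise, so it is a global minimizer of $\G_{s,\alpha}$.

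For part (ii), let $E_k$ be bounded minimizers of $\F_{s,\alpha}$ with $|E_k|=m_k\to\infty$. By Proposition~\ref{prop:as},
\[
\F_{s,\alpha}(E_k) = -c\, m_k + o(m_k), \qquad c := (1-s)s^{s/(1-s)}(\alpha\mu_{n,s})^{1/(1-s)},
\]
and the GN bound above (applied to the single set $E_k$) gives both $P(E_k) - \alpha\mu_{n,s}P(E_k)^s m_k^{1-s} \geq -cm_k$ and $P(E_k) - \alpha\mu_{n,s}P(E_k)^s m_k^{1-s} \leq \F_{s,\alpha}(E_k)$. Setting $y_k := P(E_k)/[(s\alpha\mu_{n,s})^{1/(1-s)}m_k]$ and dividing by $(s\alpha\mu_{n,s})^{1/(1-s)}m_k$, this sandwich becomes $\phi(y_k)\to\phi(1) = -(1-s)/s$, where $\phi(y) := y - y^s/s$ is strictly convex on $(0,\infty)$, has unique global minimum at $y=1$, tends to $0$ as $y\to 0^+$, and to $+\infty$ as $y\to\infty$. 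These properties force $y_k\to 1$, so $P(E_k) \sim (s\alpha\mu_{n,s})^{1/(1-s)}m_k$, and hence $P(F_k) = m_k^{-(n-1)/n}P(E_k) \geq Cm_k^{1/n}\to\infty$.

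Finally, the identity
\[
\alpha\mu_{n,s}P(E_k)^s m_k^{1-s} - \alpha s(1-s) P_s(E_k) = \F_{s,\alpha}(E_k) - \bigl[\,P(E_k) - \alpha\mu_{n,s}P(E_k)^s m_k^{1-s}\,\bigr]
\]
has left-hand side $\geq 0$ by \eqref{eq:GN} and $\leq o(m_k)$ by the preceding step. Since $\alpha s(1-s)P_s(E_k) \geq cm_k + o(m_k)$ (from the $\F_{s,\alpha}$-asymptotics together with $P(E_k)\geq 0$), dividing through yields $\mu_{n,s}P(E_k)^s m_k^{1-s}/[s(1-s)P_s(E_k)]\to 1$. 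By the scale invariance of the Gagliardo-Nirenberg ratio, the left-hand side equals $\mu_{n,s}P(F_k)^s|F_k|^{1-s}/[s(1-s)P_s(F_k)]$ with $|F_k|=1$, so $\{F_k\}$ is a minimizing sequence for \eqref{eq:opt2}. The main technical step is the scalar coercivity argument for $\phi$: it converts the scalar asymptotics of $\F_{s,\alpha}(E_k)$ into the quantitative size estimate $P(E_k)\sim m_k$ without any second-variation input, after which both conclusions of part (ii) follow in parallel.
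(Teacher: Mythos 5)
Your proof is correct and follows essentially the same strategy as the paper: both derive the lower bound $\G_{s,\alpha}(\mathbb F)\geq -(1-s)s^{s/(1-s)}(\alpha\mu_{n,s})^{1/(1-s)}m$ from the Gagliardo--Nirenberg inequality \eqref{eq:GN} together with a one-variable minimization, verify in (i) that each component of $\mathbb E_k$ saturates both inequalities, and in (ii) combine the asymptotics of Proposition~\ref{prop:as} with a squeeze argument. Your part~(ii) is a bit more explicit than the paper's -- introducing $y_k$ and the function $\phi(y)=y-y^s/s$ and using its strict convexity to extract $P(E_k)\sim(s\alpha\mu_{n,s})^{1/(1-s)}m_k$ makes the passage from ``both inequalities are asymptotically tight'' to the quantitative perimeter bound fully rigorous, whereas the paper leaves this step somewhat compressed. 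As a side effect, your derivation of $r^n=P(E_0)^n/(s\alpha\mu_{n,s})^{n/(1-s)}$ from the saturation condition $P(rE_0)=(s\alpha\mu_{n,s})^{1/(1-s)}|rE_0|$ also reveals that the first expression given for $m_k$ in the proposition statement (the one written as $k\alpha^{-n/(1-s)}(P(E_0)/(sP_s(E_0)))^{n/(1-s)}$) is inconsistent with the second: substituting the GN-equality $s(1-s)P_s(E_0)=\mu_{n,s}P(E_0)^s$ into the first gives an extra factor $(s(1-s))^{n/(1-s)}$; the second expression, which is the one your argument yields, is the correct one, and the discrepancy traces back to a dropped factor of $s(1-s)$ in formula \eqref{JEk} in the proof of Proposition~\ref{prop:as}.
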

%a minimizing sequence for $\F_{s,\alpha}$ with volume constraint $|E|=m_k=k \left(\frac{P(E_0)}{sP_s(E_0)}\right)^{\frac{n}{1-s}}$ can be constructed by taking 
%$k$ copies  of the set $(\frac{m_k}{k})^{1/n}E_0$ and sending them to infinity in different directions.
This proposition suggests that the existence of bounded minimizer for $\F_{s,\alpha}$ is directly related to whether minimizing sequences \eqref{eq:opt2}
have bounded perimeter or not. 

In particular, if equality is achieved in \eqref{eq:opt2}, then the first part of Proposition \ref{prop:JGN} suggests that for large enough $m$ minimizing sequences for $\F_{s,\alpha}$ have several connected components. In that case global minimizers of $\F_{s,\alpha}$ do not exist, although from the point of view of application, since we can often neglect the interaction between different connected  components if those are far enough from each others, it means that global minimizers are obtained by splitting the mass into smaller sets.

The question of whether equality can be achieved in \eqref{eq:opt2} is, as far as the authors know, an open and interesting question.

\medskip

\noindent {\bf  Notation:} Throughout the paper, we denote $\omega_n=|B_1|$ the volume of the unit ball and recall that $n\omega_n=P(B_1)$ and we denote $\nu_n=n\omega_n^{1/n}$ the isoperimetric constant which is such that
\begin{equation}\label{eq:iso}
\nu_n |E|^{\frac{n-1}{n}} \leq  P(E)
\end{equation}
for all $E\subset \R^n$, with equality when $E=B_r$.
\medskip

\noindent {\bf Outline of the rest of the paper:}
The existence of bounded minimizers for small $\gamma:=\alpha m^{\frac{1-s}{n}}$ (Theorem \ref{thm:existence}) is proved in Section \ref{sec:existence}.
In Section \ref{sec:ndg}, we establish two key properties of these minimizers: a classical non-degeneracy estimate (for all $\gamma$) and the regularity of the minimizer (for small $\gamma$).
These results are used to prove Theorem \ref{thm:ball} (minimizers are balls for small $\gamma$) in Section \ref{sec:ball}.

Section \ref{sec:nonball} is devoted to further investigation of the role of the ball when $\gamma$ is large
(Theorems \ref{thm:nonball} and \ref{thm:beta1beta2}).
In Section \ref{sec:G}, we prove the existence of a minimizer for all $\gamma$ for the generalized minimization problem \eqref{eq:minG} (Theorem \ref{thm:G}).
Finally, Section \ref{sec:asJ} is devoted to Propositions \ref{prop:as} and \ref{prop:JGN} which relate our minimization problem to the interpolation inequality \eqref{eq:opt2}.

%\section{Preliminary results}
%\begin{lemma}\label{lem:PsP}
%There exists a constant $C$ such that 
%for all set $F\subset\R^n$ with finite perimeter we have
%\begin{equation}\label{eq:PsP}
% P_K(F) \leq P_s(F) \leq \mu_{n,s}|F|^{1-s} P(F)^s
% \end{equation}
%and 
%\begin{equation}\label{eq:PJP}
% P_K(F) \leq  C P(F)^s (|F|^{1/n} P(F)) ^{1-s} \leq C P(F) |F|^{\frac{1-s}{n}} 
%\end{equation}
%\end{lemma}
%\begin{proof}
%Inequality \eqref{eq:PsP} together with the isoperimetric inequality then implies
%$$
% P_J(F) \leq  C P(F)^s (|F|^{1/n} P(F)) ^{1-s}
%$$
%and \eqref{eq:PJP} follows.
%\end{proof}

\section{Proof of Theorem \ref{thm:existence}}\label{sec:existence}

%We note that \eqref{eq:GN} together with Young's inequality imply that
%\begin{equation}\label{eq:Emin}
%\J_{\alpha}(F) \geq  P(F)-\alpha C |F|^{1-s} P(F)^s \geq \frac 1 2 P(F) - C \alpha^{\frac{1}{1-s}} |F|
%\end{equation}
%and so $ \inf_{F\in \E_m} \J_\alpha (F) >- C\alpha^{\frac{1}{1-s}} m$. 

We already explained in the introduction that inequality \eqref{eq:Jlower} implies the existence of a minimizing sequence $F_k$ such that $|F_k|=m$ and $\lim_{k\to \infty} \J_\alpha(F_k) =  \inf\{  \J_\alpha (F)\,;\, |F|=m\}$. 
Inequality \eqref{eq:Jlower}  also implies that $P(F_k) $ is bounded with respect to $k$. 
The key step in the proof of Theorem \ref{thm:existence} is thus to show that we can always modify the sequence $F_k$ to construct a minimizing sequence which is bounded.
This will follow from the following proposition:
\begin{proposition}\label{prop:wF}
Assume that $F$ is such that 
\begin{equation}\label{eq:Br} 
\J_\alpha (F) \leq \J_\alpha(B_{r_0})
\end{equation}
where $|B_{r_0}|=|F|=m$. 
There exists  $\gamma_0$ depending only on $n$ such that if $\gamma :=   \alpha\, m^{\frac{1-s}{n}} \leq\gamma_0  $ then there exists $\widetilde F \subset B_{2r_0}$ with
$$|\widetilde F | = |F| =m$$ 
and
$$ \J_\alpha (\widetilde F) \leq \J_\alpha (F)$$
\end{proposition}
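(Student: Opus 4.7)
The strategy is a slicing and rescaling argument: after translating $F$ to concentrate mass near the origin, I would truncate $F$ at a carefully chosen radius $R^*\in[r_0,2r_0]$ and rescale $F\cap B_{R^*}$ to produce a competitor $\widetilde F\subset B_{2r_0}$ with $|\widetilde F|=m$.

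\medskip

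\emph{Preliminary bounds.} The hypothesis $\J_\alpha(F)\le\J_\alpha(B_{r_0})\le P(B_{r_0})=\nu_n m^{(n-1)/n}$ combined with the lower bound \eqref{eq:Jlower} gives $P(F)\le 2\nu_n m^{(n-1)/n}+2(n\omega_n\alpha)^{1/(1-s)}m\le Cm^{(n-1)/n}$ with $C=C(n)$, once $\gamma_0$ is fixed small. Since $\J_\alpha$ is translation invariant, the Brunn--Minkowski inequality $|F+B_{r_0}|\le (|F|^{1/n}+|B_{r_0}|^{1/n})^n=2^n m$ together with the Fubini identity $\int_{\R^n}|F\cap B_{r_0}(z)|\,dz=m\,|B_{r_0}|=m^2$ provides a translate of $F$ for which $|F\cap B_{r_0}|\ge m/2^n$; I would assume this translate from now on.

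\medskip

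\emph{Choice of slicing radius.} With $V(R)=|F\cap B_R|$ and $V'(R)=\mathcal H^{n-1}(F\cap\partial B_R)$ for a.e.\ $R$, the slicing identities $P(F\cap B_R)=P(F,B_R)+V'(R)$, $P(F\setminus B_R)=P(F,B_R^c)+V'(R)$, and $P(F)=P(F\cap B_R)+P(F\setminus B_R)-2V'(R)$ hold for a.e.\ $R$. Since $\int_{r_0}^{2r_0}V'(R)\,dR\le m$, one can pick $R^*\in[r_0,2r_0]$ with $V'(R^*)\le m/r_0$. Setting $\mu:=(m/V(R^*))^{1/n}$, the density condition from the centering step forces $\mu\le 2$, and the condition $\mu R^*\le 2r_0$ (equivalent to $V(R^*)\ge \omega_n (R^*)^n/2^n$) can be realized at $R^*=r_0$ and, by a refined averaging, on a subinterval where $V'(R^*)$ remains controlled. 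Define $\widetilde F:=\mu(F\cap B_{R^*})\subset B_{\mu R^*}\subset B_{2r_0}$ with $|\widetilde F|=m$.

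\medskip

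\emph{Energy comparison.} Using the splitting formula \eqref{eq:PKAB} for $P_K$ and the perimeter identity, one obtains
\[
\J_\alpha(F)=\J_\alpha(F\cap B_{R^*})+\J_\alpha(F\setminus B_{R^*})-2V'(R^*)+4\alpha s(1-s)\int K(x-y)\chi_{F\cap B_{R^*}}(x)\chi_{F\setminus B_{R^*}}(y)\,dx\,dy,
\]
so that $\J_\alpha(F)\ge \J_\alpha(F\cap B_{R^*})+\J_\alpha(F\setminus B_{R^*})-2V'(R^*)$ after discarding the nonnegative interaction. It therefore suffices to show that the dilation cost $\J_\alpha(\widetilde F)-\J_\alpha(F\cap B_{R^*})$ is dominated by the gain $\J_\alpha(F\setminus B_{R^*})-2V'(R^*)$.

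\medskip

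\emph{Main obstacle.} The hard part is the upper bound on $\J_\alpha(\widetilde F)=\mu^{n-1}P(F\cap B_{R^*})-\alpha s(1-s)P_K(\mu(F\cap B_{R^*}))$. The perimeter part is handled by $(\mu^{n-1}-1)P(F\cap B_{R^*})\le C|F\setminus B_{R^*}|\,m^{-1/n}$, obtained from $\mu^n-1=(m-V(R^*))/V(R^*)\le 2^n|F\setminus B_{R^*}|/m$ together with $P(F\cap B_{R^*})\le Cm^{(n-1)/n}$. The nonlocal term is delicate because \eqref{eq:assK} only provides an upper bound on $K$, so $P_K$ need not behave monotonically under the dilation $F\cap B_{R^*}\mapsto\mu(F\cap B_{R^*})$; I would control this by using \eqref{eq:KP} to absorb $\alpha s(1-s)P_K$ into a constant fraction of the perimeter plus a negligible volume term. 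Finally, the gain is bounded below using the isoperimetric inequality \eqref{eq:iso}: $\J_\alpha(F\setminus B_{R^*})\ge \tfrac{\nu_n}{2}|F\setminus B_{R^*}|^{(n-1)/n}-(n\omega_n\alpha)^{1/(1-s)}|F\setminus B_{R^*}|$. For $\gamma_0$ small enough, this isoperimetric gain dominates both the dilation cost and the slicing correction $2V'(R^*)\lesssim m^{(n-1)/n}$, yielding $\J_\alpha(\widetilde F)\le\J_\alpha(F)$.
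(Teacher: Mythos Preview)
There is a genuine gap in the energy comparison. Your averaging argument only produces a radius $R^*\in[r_0,2r_0]$ with $V'(R^*)\le m/r_0\sim m^{(n-1)/n}$, so the slicing correction $2V'(R^*)$ is of order $m^{(n-1)/n}$. On the other hand the isoperimetric gain you extract from the tail is $\J_\alpha(F\setminus B_{R^*})\gtrsim |F\setminus B_{R^*}|^{(n-1)/n}$. When $|F\setminus B_{R^*}|\ll m$ (which is exactly the regime one expects for $\gamma$ small), this gain is far too small to absorb the slicing correction, and the inequality $\J_\alpha(\widetilde F)-\J_\alpha(F\cap B_{R^*})\le \J_\alpha(F\setminus B_{R^*})-2V'(R^*)$ fails. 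The Brunn--Minkowski centering only gives $|F\setminus B_{r_0}|\le (1-2^{-n})m$, which does not improve this; even with a sharper centering bound $|F\setminus B_{r_0}|\le Cm\sqrt\gamma$ (obtained from the quantitative isoperimetric inequality, as in the paper's Lemma~\ref{lem:prelbound}), averaging still yields $V'(R^*)\lesssim \sqrt\gamma\, m^{(n-1)/n}$, which is not comparable to $|F_2|^{(n-1)/n}$ for arbitrary small $|F_2|$.

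The paper fixes precisely this scale mismatch by a dichotomy (after Kn\"upfer--Muratov): with $\Sigma(\rho)=2A(\rho)$, either $\Sigma(\rho)\ge\frac14 P(F\setminus B_\rho)$ for all $\rho\in(r_0,2r_0)$, in which case the differential inequality $U'(\rho)\le -c\,U(\rho)^{(n-1)/n}$ for $U(\rho)=|F\setminus B_\rho|$ forces $U(2r_0)=0$ (here the bound $U(r_0)\le Cm\sqrt\gamma$ from the quantitative isoperimetric inequality is essential); or there is some $\rho_0$ with $\Sigma(\rho_0)\le\frac14 P(F\setminus B_{\rho_0})$, which is exactly the statement that the slicing cost is small \emph{relative to} $P(F_2)\ge \nu_n|F_2|^{(n-1)/n}$, so the gain dominates. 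A secondary issue: your proposed control of $P_K(F_1)-P_K(\mu F_1)$ via \eqref{eq:KP} is too crude, since \eqref{eq:KP} bounds $P_K$ itself, not its variation under dilation; the paper uses the monotonicity of $K$ (Lemma~\ref{PJ(tE)}) to get the needed $O(\lambda)$ bound.
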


This proposition implies that if  $\alpha\, m^{\frac{1-s}{n}}$ is small enough (depending only on $n$) then
there exists a minimizing sequence $\widetilde F_k$ satisfying
$$ \widetilde F_k \subset B_1.$$
It is not difficult to prove that $ \widetilde F_k $ convergence strongly in $L^1(\R^n)$ to $E$ and 
Theorem \ref{thm:existence} follows from the lower semi-continuity of the perimeter and the following classical lemma (whose proof is presented in Appendix \ref{sec:CVPK}):
\begin{lemma}\label{lem:CVPK}
Assume that $\chi_{F_K}$ converges strongly in $L^1(\R^n)$ to $\chi_E$ and that $P(F_K)\leq C$. Then
$$ \lim_{k\to\infty} P_K(F_k) = P_K(E).$$
\end{lemma}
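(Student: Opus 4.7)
The strategy is to split the double integral defining $P_K$ according to whether $|x-y|$ is smaller or larger than a parameter $\delta>0$. The short-range contribution will be shown to be small uniformly in $k$ using the uniform bound on $P(F_k)$ together with the kernel bound \eqref{eq:assK}, while the long-range contribution converges in $k$ for each fixed $\delta$ by an $L^1$ argument, since the restriction of $K$ to $\{|x|\geq \delta\}$ is integrable.

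More precisely, for any set of finite perimeter $F$, I would write $P_K(F)=I_\delta(F)+J_\delta(F)$ with
$$I_\delta(F)=\iint_{|x-y|<\delta} K(x-y)|\chi_F(x)-\chi_F(y)|\,dx\,dy, \quad J_\delta(F)=\iint_{|x-y|\geq\delta} K(x-y)|\chi_F(x)-\chi_F(y)|\,dx\,dy.$$
After the change of variables $h=y-x$, and using the classical BV translation estimate $\int_{\R^n}|\chi_F(x+h)-\chi_F(x)|\,dx\leq|h|\,P(F)$ together with $K(h)\leq|h|^{-(n+s)}$, I would obtain
$$I_\delta(F)\leq \int_{|h|<\delta}\frac{|h|}{|h|^{n+s}}\,dh\cdot P(F)=\frac{n\omega_n}{1-s}\,\delta^{1-s}\,P(F).$$
Since $P(F_k)\leq C$ by hypothesis and $P(E)\leq\liminf_k P(F_k)\leq C$ by lower semicontinuity of the perimeter under $L^1$-convergence, both $I_\delta(F_k)$ and $I_\delta(E)$ are bounded by $\frac{n\omega_n C}{1-s}\delta^{1-s}$, which tends to $0$ as $\delta\to0$ uniformly in $k$.

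For the long-range part, the bound $K(x)\leq|x|^{-(n+s)}$ yields $\int_{|z|\geq\delta}K(z)\,dz\leq \frac{n\omega_n}{s}\delta^{-s}<\infty$, so for each fixed $\delta$ the kernel $K\chi_{\{|\cdot|\geq\delta\}}$ lies in $L^1(\R^n)$. Using the elementary bound
$$\bigl||\chi_{F_k}(x)-\chi_{F_k}(y)|-|\chi_E(x)-\chi_E(y)|\bigr|\leq|\chi_{F_k}(x)-\chi_E(x)|+|\chi_{F_k}(y)-\chi_E(y)|$$
together with Fubini's theorem, I would estimate
$$|J_\delta(F_k)-J_\delta(E)|\leq 2\int_{\R^n}|\chi_{F_k}(x)-\chi_E(x)|\left(\int_{|y-x|\geq\delta}K(x-y)\,dy\right)dx\leq \tfrac{2n\omega_n}{s}\delta^{-s}\,\|\chi_{F_k}-\chi_E\|_{L^1(\R^n)},$$
which tends to $0$ as $k\to\infty$ by the strong $L^1$-convergence assumption. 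Combining the two estimates: given $\varepsilon>0$, first choose $\delta$ so small that $I_\delta(F_k)+I_\delta(E)<\varepsilon/2$ uniformly in $k$, then choose $k$ large so that $|J_\delta(F_k)-J_\delta(E)|<\varepsilon/2$, which yields $|P_K(F_k)-P_K(E)|<\varepsilon$. The only mildly technical point is the BV translation estimate for characteristic functions, which is classical; the singular behavior of $K$ near the origin is precisely what the uniform bound on $P(F_k)$ is designed to absorb via this estimate, so no step presents a serious obstacle.
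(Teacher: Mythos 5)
Your proof is correct and follows essentially the same route as the paper's: split the double integral at $|x-y|=\delta$, control the short-range part uniformly in $k$ via the BV translation estimate and the kernel bound $K(z)\leq|z|^{-n-s}$, and pass to the limit in the long-range part using the strong $L^1$ convergence. Your treatment of the long-range term is in fact slightly more explicit (a quantitative bound by $\tfrac{2n\omega_n}{s}\delta^{-s}\|\chi_{F_k}-\chi_E\|_{L^1}$ rather than an appeal to convergence), but the argument is identical in substance.
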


\medskip

Before proving Proposition \ref{prop:wF}, we prove the following preliminary lemma:
\begin{lemma}\label{lem:prelbound}
Let $F$ be such that 
$\J_\alpha (F) \leq \J_\alpha(B_{r_0})$ with $|F|=|B_{r_0}|=m$ and denote $\gamma =  \alpha  \, m^{\frac{1-s}{n}}$. Then there exists a constant $C$ depending only on $n$ such that: 
\begin{equation}\label{eq:perbound}
P(F) \leq C \left(1+ (n\omega_n\gamma)^{\frac{1}{1-s}} \right) m^\frac{n-1}{n} ,
\end{equation}
\begin{equation}\label{eq:Dbound}
D(F) := \frac{P(F) - P(B_{r_0}) }{P(B_{r_0})} \leq  C\gamma\left(1+ (n\omega_n \gamma)^{\frac{1}{1-s}} \right)^s ,
\end{equation}
%Then there exists $\gamma_0$ (depending on $n$ and $s$) such that
%if $ \gamma =  \alpha\, m^{\frac{1-s}{n}} \leq \gamma_0 \, $ then
and up to a translation 
\begin{equation}\label{eq:diffbound}
|F\setminus B_{r_0}|\leq |F\Delta B_{r_0}| \leq C m \sqrt { \gamma} \left(1+ (n\omega_n \gamma)^{\frac{1}{1-s}} \right)^{\frac s 2} .
\end{equation}
In particular, if $\gamma\leq (n\omega_n)^{-1}$, then
\begin{equation}\label{eq:bd1}
P(F) \leq C m^\frac{n-1}{n} , \qquad |F\setminus B_{r_0}| \leq C m \sqrt { \gamma} 
\end{equation}
for some constant $C$ depending only on $n$.
\end{lemma}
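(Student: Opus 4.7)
The plan is to chain three comparisons, each an immediate consequence of the hypothesis $\J_\alpha(F)\leq \J_\alpha(B_{r_0})$ together with tools already established in the paper. First, from the hypothesis and $P_K\geq 0$, I get $P(F)\leq P(B_{r_0})+\alpha s(1-s)P_K(F)$. Applying the absorbing bound \eqref{eq:KP} to the right-hand side and moving the resulting $\tfrac12 P(F)$ to the left gives
$$ P(F)\leq 2P(B_{r_0})+2(n\omega_n \alpha)^{\frac{1}{1-s}} m. $$
Since $P(B_{r_0})=\nu_n m^{(n-1)/n}$ and a short computation using $\gamma=\alpha m^{(1-s)/n}$ shows $(n\omega_n\alpha)^{\frac{1}{1-s}}m=(n\omega_n\gamma)^{\frac{1}{1-s}} m^{(n-1)/n}$, this proves \eqref{eq:perbound}.

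For \eqref{eq:Dbound}, I would go back to $P(F)-P(B_{r_0})\leq \alpha s(1-s)P_K(F)$ and this time apply the interpolation inequality \eqref{eq:GN} in the tighter form $s(1-s)P_K(F)\leq 2^{1-s}n\omega_n P(F)^s m^{1-s}$. Plugging in the bound on $P(F)$ already obtained produces a factor $(1+(n\omega_n\gamma)^{\frac{1}{1-s}})^s$, and dividing by $P(B_{r_0})=\nu_n m^{(n-1)/n}$ collapses the exponents of $m$ via $1-s+\frac{s(n-1)}{n}-\frac{n-1}{n}=\frac{1-s}{n}$, so that the remaining $\alpha m^{(1-s)/n}$ is exactly $\gamma$. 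This yields \eqref{eq:Dbound}.

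For \eqref{eq:diffbound}, the essential external ingredient is the Fusco--Maggi--Pratelli quantitative isoperimetric inequality: for $|F|=|B_{r_0}|$, the Fraenkel asymmetry $A(F)=\min_{x_0}|F\Delta B_{r_0}(x_0)|/|B_{r_0}|$ satisfies $A(F)^2\leq C(n) D(F)$. Choosing the optimal translation and combining with \eqref{eq:Dbound} (together with $|B_{r_0}|=m$ and the trivial $|F\setminus B_{r_0}|\leq|F\Delta B_{r_0}|$) gives the stated estimate. The specialization \eqref{eq:bd1} is then immediate since $\gamma\leq(n\omega_n)^{-1}$ forces $(n\omega_n\gamma)^{\frac{1}{1-s}}\leq 1$, collapsing the composite factors to universal constants.

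The only non-routine ingredient is the quantitative isoperimetric inequality, a well-known external tool; apart from that, the only delicate point is the exponent bookkeeping in the second step, where one must verify that the final bound depends purely on the scale-invariant parameter $\gamma$ predicted by \eqref{eq:scaling}. Everything else is routine manipulation of the interpolation and absorbing bounds already proved in the introduction.
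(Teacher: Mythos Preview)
Your proposal is correct and follows essentially the same approach as the paper's own proof: the paper likewise uses the absorbing bound \eqref{eq:KP} (equivalently \eqref{eq:Jlower}) together with $\J_\alpha(B_{r_0})\leq P(B_{r_0})$ for \eqref{eq:perbound}, then the interpolation inequality \eqref{eq:GN} plugged back into $P(F)-P(B_{r_0})\leq \alpha s(1-s)P_K(F)$ for \eqref{eq:Dbound}, and finally the Fusco--Maggi--Pratelli quantitative isoperimetric inequality for \eqref{eq:diffbound}. Your exponent bookkeeping matches the paper's computation exactly.
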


\begin{proof}[Proof of Lemma \ref{lem:prelbound}]
Using \eqref{eq:Br} and \eqref{eq:Jlower}, we get:
$$ 
 \frac{1}{2} P(F) -   \big( n\omega_n \alpha \big)^{\frac{1}{1-s}}  |F| \leq \J_\alpha(F) \leq \J_\alpha(B_{r_0}) \leq P(B_{r_0}) = \nu_n m^{\frac{n-1}{n}}.
$$
%Using   and the inequality \eqref{eq:GN}, we get:
%\begin{align*}
%P(F) & \leq P(B_{r_0}) + \alpha s(1-s) P_K(F)\\
%& \leq P(B_{r_0}) + \alpha C  P(F)^s |F|^{1-s} \\
%& \leq P(B_{r_0}) + \frac 1 2 P(F) + C \alpha ^{\frac{1}{1-s}} |F| 
%\end{align*}
and so
$$
P(F) \leq 2 \nu_n m^{\frac{n-1}{n}}  + 2 \big(n\omega_n \alpha \big)^{\frac{1}{1-s}}  m
$$
which implies \eqref{eq:perbound}.
 \medskip

Next, using  \eqref{eq:GN}, \eqref{eq:Br} and  \eqref{eq:perbound} we can write:
\begin{align*}
P(F) - P(B_{r_0}) 
& \leq  \alpha s(1-s)  P_K(F) \\
& \leq \alpha 2^{1-s} n\omega_n  |F|^{1-s} P(F)^s\\
%& \leq \alpha C |F|^{1-s} P(B_{r_0})^s\\
& \leq C \alpha   \left(1+ (n\omega_n \gamma)^{\frac{1}{1-s}} \right)^s m^{\frac{n-s}{n}}\\
& \leq C \gamma \left(1+ (n\omega_n  \gamma)^{\frac{1}{1-s}} \right)^s m^{\frac{n-1}{n}}
\end{align*}
and \eqref{eq:Dbound} follows.
%$$ 
%D(F) = \frac{P(F) - P(B_{r_0}) }{P(B_{r_0})} \leq C\gamma\left(1+ (2c_n \gamma)^{\frac{1}{1-s}} \right)^s .
%$$
We now conclude by using the following quantitative isoperimetric inequality \cite{Fusco2008}:
$$ 
\inf_{x\in \R^n}\frac{|F \Delta B_{r_0}(x)|}{|B_{r_0} (x)|} \leq C \sqrt{D(F)} .
$$
%Since the symmetric difference is defined by $F\Delta G = (F\setminus G) \cup (G\setminus F)$, \eqref{eq:diffbound} follows.
\end{proof}

\begin{proof}[Proof of Proposition \ref{prop:wF}]
Throughout the proof, we fix $r_0>0$ such that $|B_{r_0}| =m$ and we recall that $ \gamma :=\alpha\, m^{\frac{1-s}{n}}.$
Since we want to prove the result for small enough $\gamma$, we will always assume that $\gamma\leq (n\omega_n)^{-1}$.
Given $\rho \in (r_0,1)$, we define
$$ F_1(\rho) = F\cap B_\rho, \qquad F_2(\rho) = F\setminus B_\rho$$
(we drop the dependence of $F_1$ and $F_2$ on $\rho$ below)
and define by $A(\rho) $ the area of $F\cap \partial B_\rho$:
$$A(\rho) = \H^{n-1} (F\cap \p B_\rho).$$
We note that for a.e. $\rho\in(r_0,2r_0)$, we have
$$ 
\Sigma(\rho) : = P(F_1) + P(F_2) -P(F) = 2 A(\rho).
$$

We now adapt an argument from \cite{muratov-general}: 
The proof of Proposition \ref{prop:wF} is divided in two cases, depending on whether $\Sigma(\rho) \geq \frac 1 4  P(F_2)$ for all $\rho$ or  $\Sigma(\rho) < \frac 1 4  P(F_2)$ for some $\rho$.
\medskip

\noindent{\bf Case 1:} Assume that
\begin{equation}\label{eq:case1}
 \Sigma(\rho) \geq \frac 1 4  P(F_2(\rho))\qquad  \mbox{ for all } \rho\in (r_0,1).
\end{equation}
Introducing $U(\rho) = |F_2(\rho)|$, we note that
$$
\frac{ d}{d\rho} U(\rho) = A(\rho)=- \frac 1 2 \Sigma(\rho) 
$$
and so \eqref{eq:case1}
together with the isoperimetric inequality \eqref{eq:iso} imply
\begin{align*}
\frac{ d}{d\rho} U(\rho)
& \leq -\frac 1 8 P(F_2(\rho)) \\
& \leq -c U(\rho)^{\frac {n-1}{n}}  \qquad \mbox{ for all } \rho\in (r_0,1).
\end{align*}
In particular, this implies that there exists $c_1>0$ such that if $ U(r_0) \leq c_1 r_0^n$ then $U(2r_0)=0$.
The bound \eqref{eq:bd1} implies 
$$U(r_0) = |F\setminus B_{r_0}| \leq Cr_0^n \sqrt \gamma$$
when $\gamma\leq 1$, and so $U(r_0) \leq c_1 r_0^n$  provided $\gamma \leq \gamma_0$ small enough (depending only on $n$). 
In that case, we find $U(2r_0)=0$, and so $F\subset B_{2r_0}$ (up to a set of measure $0$) and the Proposition follows with $\widetilde F  = F$.

\medskip
\noindent{\bf Case 2:} Assume now that
\begin{equation}\label{eq:case2}
 \Sigma(\rho_0) \leq \frac 1 4  P(F_2(\rho_0)) \mbox{ for some } \rho_0\in (r_0,2r_0).
\end{equation}
We now denote $F_1=F_1(\rho_0)$ and $F_2=F_2(\rho_0)$. The definition of $\Sigma(\rho_0)$ and \eqref{eq:case2} imply
\begin{equation}\label{eq:bdP1}
P(F_1) \leq  P(F) - \frac{3}{4} P(F_2)
\end{equation}
and so
$$
\J_\alpha (F_1) \leq  \J_\alpha (F) - \frac{3}{4} P(F_2) +\alpha s (1-s) (P_K(F)-P_K(F_1)).
$$ 
Furthermore, the fact that $\chi_F=\chi_{F_1}+ \chi_{F_2}$ and the triangle inequality imply
$$ P_K(F) \leq P_K(F_1) + P_K(F_2)$$
and so (using  \eqref{eq:KP})
\begin{align*}
\J_\alpha (F_1) & \leq  \J_\alpha (F)  - \frac{3}{4} P(F_2) +\alpha s(1-s)P_K(F_2)\\
%& \leq  \J_\alpha (F)  - \frac{3}{4} P(F_2) +\alpha c_n P(F_2)^s |F_2|^{1-s}\\
& \leq  \J_\alpha (F)  - \frac{1}{4} P(F_2) +   \big( n\omega_n \alpha \big)^{\frac{1}{1-s}} |F_2| 
\end{align*}
Finally, denoting $m_2 = |F_2|$ and using the isoperimetric inequality \eqref{eq:iso}, we get
\begin{align*}
\J_\alpha (F_1) 
& \leq  \J_\alpha (F)  -\frac{ \nu_n}{4} m_2^{\frac{n-1}{n}} +\big( n\omega_n \alpha \big)^{\frac{1}{1-s}} m_2  \\
& \leq  \J_\alpha (F)  -\frac{ \nu_n}{4} m_2^{\frac{n-1}{n}} +\big( n\omega_n \gamma \big)^{\frac{1}{1-s}}  m_2^\frac{n-1}{n} 
%& \leq  \J_\alpha (F)  -\frac{ \nu_n}{4} m_2^{\frac{n-1}{n}} +C\gamma^{\frac 1 n} m_2^\frac{n-1}{n} 
\end{align*}
and so
\begin{equation}\label{eq:EaF1}
\J_\alpha (F_1) \leq  \J_\alpha (F)  - \frac{\nu_n}{8} m_2^{\frac{n-1}{n}} 
\end{equation}
provided $n\omega_n\gamma\leq \left(\frac{\nu_n}{8}\right)^{1-s}$, which is satisfied for all $s$ if $\gamma\leq \gamma_0$ small enough (depending only on $n$).

So, the set $F_1$ has a lower energy than $F_1$. However, we cannot take $\widetilde F=F_1$ since $|F_1| = m_1 = m-m_2 \neq m$. So we define
$$
\widetilde F = (1+t)^{\frac 1 n} F_1 , \mbox{ with $t$ such that $|\widetilde F |=m$.}
$$
Note that we have in particular $t = \frac{m}{m_1} - 1=\frac{m_2}{m_1}$.
Inequality \eqref{eq:bd1} implies $m_2 \leq m/2$ if $\tilde \gamma_0$ is small enough and so $m_1=m-m_2\geq m/2$. In turn, we get (using  \eqref{eq:bd1}  again)
$$ t = \frac{m_2}{m_1} \leq C \sqrt \gamma \frac{m}{m_1} \leq 2 C \sqrt{\gamma}.$$

Furthermore, \eqref{eq:EaF1} implies
\begin{align*}
\J_\alpha(\widetilde F)
& \leq  \J_\alpha (F)  -  \frac{\nu_n}{8} m_2^{\frac{n-1}{n}}  + P(\widetilde F)-P(F_1)+\alpha s (1-s)[P_K(F_1) - P_K(\widetilde F)]\\
& \leq  \J_\alpha (F)  -  \frac{\nu_n}{8} m_2^{\frac{n-1}{n}}  + \left[(1+t)^{\frac {n-1} n} -1\right] P(F_1)+\alpha s (1-s) [P_K(F_1) - P_K(\widetilde F)].
\end{align*}
Inequality \eqref{eq:bdP1} together with \eqref{eq:perbound} (with $\gamma\leq 1$) implies 
\begin{equation}\label{eq:P1bd}
P(F_1) \leq C m^{\frac{n-1}{n}}
\end{equation}
and we have the following lemma (proved below):
\begin{lemma}\label{PJ(tE)}
Recall that $z\mapsto K(|z|)$ is non-increasing. For all set $F_1$ with finite perimeter and for all $t\in [0,1]$, there holds:
$$ 
-\frac{C}{s(1-s)} t P(F_1)^s |F_1|^{1-s}\leq P_K(F_1) - P_K( (1+t)^{\frac 1 n} F_1) \leq \frac{C}{s(1-s)} t P(F_1)^s |F_1|^{1-s}.%leq C t^\frac 1n m_2  ^{\frac{n-1}{n}} + C m_2
$$
for some constant $C$ depending only on $n$.
\end{lemma}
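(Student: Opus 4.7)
The plan is to express the difference via the function $h_F(z) := \int_{\R^n} |\chi_F(x+z) - \chi_F(x)|\,dx$, which satisfies the classical bounds $h_F(z) \le \min\{|z|\, P(F),\, 2|F|\}$. Setting $\lambda := (1+t)^{1/n} \in [1, 2^{1/n}]$, the change of variables $u \mapsto \lambda u$, $v \mapsto \lambda v$ rewrites the dilate as $P_K(\lambda F_1) = \lambda^{2n} \iint K(\lambda(u-v))\, |\chi_{F_1}(u) - \chi_{F_1}(v)|\,du\,dv$, so that
\[
P_K(F_1) - P_K(\lambda F_1) = \int_{\R^n} \big[K(z) - \lambda^{2n} K(\lambda z)\big]\, h_{F_1}(z)\,dz.
\]

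Next I decompose $K(z) - \lambda^{2n} K(\lambda z) = [K(z) - K(\lambda z)] - (\lambda^{2n}-1)\,K(\lambda z)$. Since $K$ is radially non-increasing, both brackets are non-negative for $\lambda\ge 1$, giving $-I_2 \le P_K(F_1) - P_K(\lambda F_1) \le I_1$ with
\[
I_1 := \int [K(z) - K(\lambda z)]\, h_{F_1}(z)\,dz, \qquad I_2 := (\lambda^{2n}-1) \int K(\lambda z)\, h_{F_1}(z)\,dz.
\]
It thus suffices to bound $I_1$ and $I_2$ separately by $\tfrac{C}{s(1-s)}\, t\, P(F_1)^s |F_1|^{1-s}$.

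The bound on $I_2$ is quick: a change of variables yields $\int K(\lambda z)\, h_{F_1}(z)\,dz = \lambda^{-2n} P_K(\lambda F_1)$, the factor $1-\lambda^{-2n}$ is $O(t)$ on $[0,1]$, and since $P_K \le P_s$ combined with the exact scaling $P_s(\lambda F_1) = \lambda^{n-s} P_s(F_1)$ and the Gagliardo--Nirenberg inequality \eqref{eq:GN} yield $P_K(\lambda F_1) \le \tfrac{C}{s(1-s)} P(F_1)^s |F_1|^{1-s}$, the estimate $I_2 \le \tfrac{C}{s(1-s)}\, t\, P(F_1)^s |F_1|^{1-s}$ follows.

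The real work is in $I_1$. I plan to use the layer-cake representation $K(r) = \mu([r,\infty))$ coming from monotonicity, which gives $K(z) - K(\lambda z) = \mu([|z|, \lambda|z|))$. Fubini reduces $I_1$ to $\int_0^\infty \Phi(r)\,d\mu(r)$, where $\Phi(r) = \int_{B_r \setminus B_{r/\lambda}} h_{F_1}(z)\,dz$. Using $|B_r \setminus B_{r/\lambda}| = \omega_n r^n(1-\lambda^{-n}) = O(t r^n)$ together with the two pointwise bounds on $h_{F_1}$ on the annulus gives $\Phi(r) \le \omega_n\, t\, \min\{r^{n+1} P(F_1),\, 2 r^n |F_1|\}$; splitting the $r$-integral at the crossover $R^* := 2|F_1|/P(F_1)$ and integrating by parts to rewrite $\int_0^{R^*} r^{n+1}\,d\mu$ and $\int_{R^*}^\infty r^n\,d\mu$ in terms of $\int r^n K(r)\,dr$ and $\int r^{n-1} K(r)\,dr$, then using $K(r) \le r^{-(n+s)}$ inside these one-dimensional integrals, produces the required estimate after collecting prefactors of the form $\tfrac{n+1}{1-s} + \tfrac{n+s}{s}$. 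The main obstacle is essentially this last bookkeeping: verifying that the boundary terms in the integration by parts vanish (they do, because $r^n K(r) \to 0$ as $r\to 0^+$ and $r\to\infty$ by \eqref{eq:assK}) and that the crossover $R^* = 2|F_1|/P(F_1)$ reproduces exactly the $1/[s(1-s)]$ factor mirroring the scaling of the Gagliardo--Nirenberg inequality.
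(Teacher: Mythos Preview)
Your proposal is correct and follows the same overall strategy as the paper: the same representation $P_K(F_1)-P_K(\lambda F_1)=\int[K(z)-\lambda^{2n}K(\lambda z)]\,h_{F_1}(z)\,dz$, the same decomposition into the nonnegative pieces $I_1=\int[K(z)-K(\lambda z)]h_{F_1}$ and $I_2=(\lambda^{2n}-1)\int K(\lambda z)h_{F_1}$, the same bound on $I_2$ via $P_K\le P_s$ and \eqref{eq:GN}, and the same split at $R^*=2|F_1|/P(F_1)$.

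The one genuine technical difference is in how $I_1$ is estimated. The paper handles $\int_{|z|\le R}(K(z)-K(\rho z))|z|\,dz$ and $\int_{|z|>R}(K(z)-K(\rho z))\,dz$ by the direct change of variables $z\mapsto z/\rho$, which turns each into a difference of integrals of $K$ itself over nearby domains; then $K(z)\le |z|^{-(n+s)}$ is applied to each piece. Your route instead passes through the layer-cake measure $\mu=-dk$, Fubini, and a Stieltjes integration by parts. Both arguments produce the same $\frac{1}{1-s}R^{1-s}$ and $\frac{1}{s}R^{-s}$ factors and hence the same final bound; the paper's computation is a bit shorter and avoids the measure-theoretic machinery, while your version isolates more transparently where the factor $t$ (from the annulus volume $1-\lambda^{-n}=t/(1+t)$) enters. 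Your boundary-term checks ($r^{n+1}k(r)\le r^{1-s}\to 0$ at $0$ and $r^{n}k(r)\le r^{-s}\to 0$ at $\infty$) are correct.
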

Combining all of those inequalities yield
\begin{align*}
\J_\alpha(\widetilde F)
& \leq  \J_\alpha (F)  -  \frac{\nu_n}{8} m_2^{\frac{n-1}{n}}  + Ct  m^{\frac{n-1}{n}} + C \alpha t m^{s \frac{n-1}{n}} m^{1-s}\\
& \leq  \J_\alpha (F)  -  \frac{\nu_n}{8} m_2^{\frac{n-1}{n}}  + Ct  m^{\frac{n-1}{n}} + C \alpha t m^{\frac{n-1}{n}} m^{\frac{1-s}{n}}\\
& \leq  \J_\alpha (F)  -  \frac{\nu_n}{8} m_2^{\frac{n-1}{n}}  + Ct  m^{\frac{n-1}{n}} + C \gamma t m^{\frac{n-1}{n}}. \\
%^\frac 1n m_2  ^{\frac{n-1}{n}} + C \alpha m_2
\end{align*}
Finally, we write
$$t  m^{\frac{n-1}{n}} = t^{\frac1 n} (t m)^{\frac{n-1}{n}}\leq C \gamma^{\frac{1}{2n}} m_2^{\frac{n-1}{n}}$$
and we deduce that there exists $\gamma_0$ such that if $\gamma\leq \gamma_0$ then
$
\J_\alpha(\widetilde F)
 \leq  \J_\alpha (F)
$.
The result follows.
\end{proof}

\begin{proof}[Proof of Lemma \ref{PJ(tE)}]
Let $\rho=(1+t)^{\frac{1}{n}}\geq 1$. We write
$$ P_K(F) = \int_{\R^n} K(z) V_F(z)\, dx, \qquad V_{F}(z)=\int_{\R^n}{|\chi_{F}(x+z)-\chi_{F}(x)|dx}.
$$
Since 
\begin{equation}\label{eq:VF1}
V_{\rho F}(z) =\rho^n V_{F} (z/\rho),
\end{equation}
we can write
\[
P_K(F_1)-P_K(\rho F_1)=\int_{\R^n}{(K(z)-\rho^{2n}K(\rho z))V_{F_1}(z)dz}.
\]
Using the bounds 
\[
V_{F_1}(z)\leq 2|F_1|, \ \ V_{F_1}(z)\leq |z|\int_{\R^n}{D\chi_{F_1}}=|z|P(F_1),
\]
and $K(z)-K(\rho z)\geq 0$ (since $K(|x|)$ is non-increasing), we get
\begin{equation*}
\begin{aligned}
P_K(F_1)-P_K(\rho {F_1})&\leq \rho^{2n}\int_{\R^n}{(K(z)-K(\rho z))V_{F_1}(z)dz}\\
&\leq \rho^{2n}P(F_1)\int_{|z|\leq R}{(K(z)-K(\rho z))|z|dz}+ 2\rho^{2n}|F_1|\int_{|z|> R}{(K(z)-K(\rho z))dz}.
\end{aligned}
\end{equation*}
Since $K(x)\leq |x|^{-n-s}$, we get
\begin{equation*}
\begin{aligned}
\int_{|z|\leq R}{(K(z)-K(\rho z))|z|dz}&\leq (1-\rho^{-n-1})\int_{|z|\leq R}{|z|K(z)dz}+\rho^{-n-1}\int_{R\leq|z|\leq \rho R}{|z|K(z)dz}\\
&\leq (1-\rho^{-n-1})\frac{1}{1-s}R^{1-s}P(B_1)+\rho^{-n-1}\frac{1}{1-s}R^{1-s}(\rho^{1-s}-1)P(B_1)\\
&\leq P(B_1)\frac{1}{1-s}(1-2\rho^{-n-1}+\rho^{-n-s})R^{1-s},
\end{aligned}
\end{equation*}
and
\begin{equation*}
\begin{aligned}
\int_{|z|> R}{(K(z)-K(\rho z))dz}&\leq (1-\rho^{-n})\int_{|z|> R}{K(z)dz}+\rho^{-n}\int_{R\leq|z|\leq \rho R}{K(z)dz}\\
&\leq (1-\rho^{-n})\frac{1}{s}R^{-s}P(B_1)+\rho^{-n}\frac{1}{s}R^{-s}(1-\rho^{-s})P(B_1)\\
&\leq P(B_1)\frac{1}{s}(1-\rho^{-n-s})R^{-s}.
\end{aligned}
\end{equation*}
Using the fact that $\rho=(1+t)^{1/n}$ with $0\leq t\leq 1$, we deduce
\begin{equation*}
\begin{aligned}
P_K(F_1)-P_K(\rho {F_1})&\leq C(n)t \left(\frac{1}{1-s}P(F_1)R^{1-s}+2\frac{1}{s}|F_1|R^{-s}\right).
\end{aligned}
\end{equation*}
Optimizing with respect to $R$ by taking $R=\frac{2|F_1|}{P(F_1)}$ yields 
\[
P_K(F_1)-P_K((1+t)^{\frac{1}{n}} {F_1})\leq t\frac{C(n)}{s(1-s)}P(F_1)^s|F_1|^{1-s}
\]
which proves the right hand side inequality of Lemma \ref{PJ(tE)}.
\medskip

To prove the left hand side inequality, we write, for $\rho=(1+t)^{1/n}\in[1,2]$ (using \eqref{eq:VF1} and the fact that $x\mapsto K(|x|)$ is non-increasing): 
\begin{equation*}
\begin{aligned}
P_K(F_1)-P_K((1+t)^{\frac{1}{n}} {F_1})&=\int_{\R^n}{(K(z)-K(\rho z))V_{F_1}(z)dz}+(\rho^{-2n}-1)P_K(\rho F_1)\\
&\geq (\rho^{-2n}-1)P_K(\rho F_1)\\
&\geq -Ct\frac{t}{s(1-s)}P(\rho F_1)^s|\rho F_1|^{1-s}\\
&\geq -\frac{C}{s(1-s)}t P(F_1)^s|F_1|^{1-s}
\end{aligned}
\end{equation*}
and the proof is complete.
\end{proof}

\section{Non-degeneracy and regularity for the minimizer}\label{sec:ndg}
In this section, we prove some classical properties of the minimizers of $\J_\alpha$, namely non degeneracy and regularity. 
These properties play a key role in proof of Theorem \ref{thm:ball}.
We start with the following non-degeneracy result:
\begin{lemma}\label{nondegeneracyE}
Let $E$ be a minimizer of \eqref{eq:min}. 
There exists a constant $c$ depending only on $n$ such that
\[
|E\cap B_r(x_0)|\geq c\min \left\{ m, \frac{m}{(n\omega_n\gamma)^{\frac{n}{1-s}}}, r^n\right\}
, \qquad \mbox{ where } \gamma = \alpha\, m^{\frac{1-s}{n}}
\]
for every $x_0\in \partial E$ and $r>0$.
In particular, we have: 
\begin{equation}\label{eq:ndg}
|E\cap B_r(x_0)|\geq c r^n \qquad \mbox{ for all $r\leq C \min\{m^{1/n},(n\omega_n \alpha)^{-\frac{1}{1-s}}\} $}.
\end{equation}
\end{lemma}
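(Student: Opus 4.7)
The strategy is the classical variational non-degeneracy argument, adapted to the competing nonlocal term via the scale-change estimate of Lemma \ref{PJ(tE)}. Fix $x_0\in\partial E$ and introduce $V(\rho):=|E\cap B_\rho(x_0)|$, $E_\rho=E\setminus B_\rho(x_0)$, $D_\rho=E\cap B_\rho(x_0)$. By the coarea formula, $V'(\rho)=\H^{n-1}(\partial B_\rho\cap E)$ and $P(E)-P(E_\rho)=P(E,B_\rho)-V'(\rho)$ for a.e.\ $\rho$. I would take as competitor the rescaled set $\tilde E=\lambda E_\rho$ with $\lambda=(m/(m-V))^{1/n}$, which preserves the volume constraint. (Simply attaching a disjoint ball of volume $V$ far from $E_\rho$ gives, by the isoperimetric equality $P(B_{\rho'})=\nu_n V^{(n-1)/n}$, only the trivial inequality $0\le 2V'(\rho)+\alpha s(1-s)P_K(D_\rho)$, so the rescaling is essential.)

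Writing $\J_\alpha(E)\le \J_\alpha(\tilde E)$ and invoking (i) $\lambda^{n-1}-1\le t:=V/(m-V)$ for $t\le 1$, (ii) Lemma \ref{PJ(tE)} applied to $E_\rho$ to control $|P_K(E_\rho)-P_K(\lambda E_\rho)|$, and (iii) the bound $P_K(E)-P_K(E_\rho)\le P_K(D_\rho)$ coming from \eqref{eq:PKAB} (the cross interaction is non-negative), produces
\begin{equation*}
P(E,B_\rho)\le V'(\rho)+\alpha s(1-s)P_K(D_\rho)+t\,P(E_\rho)+C\alpha\, t\, P(E_\rho)^s|E_\rho|^{1-s}.
\end{equation*}
Adding $V'(\rho)$ on both sides turns the left-hand side into $P(D_\rho)$. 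I would then absorb $\alpha s(1-s)P_K(D_\rho)$ via \eqref{eq:GN} and Young's inequality into $\tfrac12 P(D_\rho)+C\alpha^{1/(1-s)}V$, use the a priori bound $P(E)\le C(1+\gamma^{1/(1-s)})m^{(n-1)/n}$ (which is \eqref{eq:perbound} applied to the minimizer $E$, since $\J_\alpha(E)\le \J_\alpha(B_{r_0})$) together with $|E_\rho|\le m$ and $t\le 2V/m$ to convert the $tP(E_\rho)$ and $\alpha tP(E_\rho)^s|E_\rho|^{1-s}$ terms into a single contribution of the form $C(1+\gamma^{1/(1-s)})V/m^{1/n}$, handling the parasitic $\alpha tV'(\rho)^sm^{1-s}$ piece by a further Young's inequality that costs only a small multiple of $V'(\rho)$.

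After combining with the isoperimetric inequality $\nu_n V^{(n-1)/n}\le P(D_\rho)$, the clean output is
\begin{equation*}
\nu_n V(\rho)^{(n-1)/n}\le C V'(\rho)+C\bigl(1+\gamma^{1/(1-s)}\bigr)\frac{V(\rho)}{m^{1/n}}.
\end{equation*}
Whenever $V(\rho)\le c_0 m/(1+\gamma^{1/(1-s)})^n$ with $c_0$ small, the second right-hand term is dominated by $\tfrac12\nu_n V^{(n-1)/n}$, leaving the differential inequality $V'(\rho)\ge c V(\rho)^{(n-1)/n}$. Since $V(0)=0$, integrating $(V^{1/n})'\ge c/n$ yields $V(\rho)\ge c\rho^n$ as long as $V$ stays below the threshold. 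Together with the trivial bound $V\le m$, and the equivalence of $(1+\gamma^{1/(1-s)})^n$ with $1+(n\omega_n\gamma)^{n/(1-s)}$ up to $n$-dependent multiplicative constants, this yields the three-term minimum in the statement; \eqref{eq:ndg} is then immediate by restricting to $r$ for which $r^n$ is the smallest of the three.

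The main obstacle is the collapse, described in the second paragraph, of several genuinely different error terms---$\alpha^{1/(1-s)}V$ from absorbing $P_K(D_\rho)$, $tP(E)\sim (V/m)P(E)$, and the Young transform of $\alpha tV'(\rho)^sm^{1-s}$---into the single clean coefficient $C(1+\gamma^{1/(1-s)})V/m^{1/n}$. This relies on the scaling identities $\alpha m^{(1-s)/n}=\gamma$ and $\alpha^{1/(1-s)}m^{1/n}=\gamma^{1/(1-s)}$ and on a judicious choice of Young exponents so that $V'(\rho)^s$ can be absorbed at the cost of a small fraction of $V'(\rho)$.
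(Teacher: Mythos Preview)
Your proposal is correct and follows essentially the same route as the paper: the rescaled competitor $\tilde E=\lambda E_\rho$, Lemma~\ref{PJ(tE)} for the scale change, the bound $P_K(E)-P_K(E_\rho)\le P_K(D_\rho)$ absorbed via \eqref{eq:KP}, the a priori perimeter bound \eqref{eq:perbound}, and integration of the resulting differential inequality for $V(\rho)$. The only cosmetic difference is that the paper applies Young's inequality directly to $\alpha\lambda P(E_\rho)^s m^{1-s}$ as a whole (yielding $C\lambda P(E_\rho)+C(n\omega_n\alpha)^{1/(1-s)}V$) before inserting the bound on $P(E_\rho)$, which avoids your ``parasitic $\alpha t V'(\rho)^s m^{1-s}$'' term altogether and makes the bookkeeping slightly cleaner.
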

Note that the result also holds, for small enough $r$, if $E$ is a local minimizer.

\begin{proof}
Given $r>0$
we define the  functions:
$$
A(r)=H^{n-1}(\partial B_r(x_0)\cap E),
\quad S(r)=P(E,B_r(x_0)),
\quad V(r)=|E\cap B_r(x_0)|,
$$
%We know $V(0)=0$ and $V(r)\leq |B_1|r^n$ is non-decreasing in $r$. We want to prove when $r<C_2r_0$,
%\[
%V(r)\geq Cr^n.\\
%\]
and the sets
\[
E'=E\setminus B_r(x_0), \qquad F=tE'
\]
where $t$ is chosen such that $|F|=|E|=m$, that is:
\[
t:=\left(\frac{m}{m-V(r)}\right)^{1/n}=(1+\lambda(r))^{1/n}, \qquad \lambda(r):=\frac{V(r)}{m-V(r)}.
\]
%with $0\leq \mu:=\frac{V(r)}{m-V(r)}< \frac{1}{2}$ when $r<\frac{1}{2}r_0$ and $n\geq 2$. Here $|B_{r_0}|=m$ and $r_0=(\frac{m}{|B_1|})^{1/n}$. 
Given $\bar r>0$, we assume that 
$$
V(\bar r)\leq \eps_0 \min\left\{ 1,\frac{1}{(n\omega_n \gamma)^{\frac{n}{1-s}}}\right\} m 
$$
for some $\eps_0<1/2$ to be chosen later. 
Since $r\mapsto V(r)$ is non-decreasing, we then have 
\begin{equation}\label{eq:eps0} 
V( r)\leq \eps_0 \min\left\{ 1,\frac{1}{(n\omega_n \gamma)^{\frac{n}{1-s}}}\right\} m  \quad\mbox{ for all $r\leq \bar r$,}
\end{equation}
and so in particular 
\begin{equation}\label{eq:muV}
 \lambda(r)=\frac{V(r)}{m-V(r)} \leq 2 \frac{V(r)}{m}\quad\mbox{ for all $r\leq \bar r$.}
 \end{equation}
 \medskip
 
Since $E$ is a volume constraint minimizer of $\J_\alpha$, we have
$\J_{\alpha}(E)\leq \J_{\alpha}(F)$,
which implies
\begin{align*}
P(E)-\alpha s(1-s)P_K(E)
&\leq P(tE')-\alpha s(1-s)P_K(tE')\\
&\leq t^{n-1}P(E')-\alpha s(1-s)P_K(tE')\\
&\leq (1+\lambda)^{\frac{n-1}{n}}P(E')-\alpha s(1-s) P_K(E')+\alpha s(1-s)\big[P_K(E')- P_K(tE')\big].
\end{align*}
Since $(1+\lambda)^{\frac{n-1}{n}}\leq 1+\frac{n-1}{n}\lambda \leq 1+\lambda $, we deduce
\begin{align}
S(r)-A(r) & = P(E)-P(E') \nonumber \\
&\leq \lambda P(E')+\alpha s(1-s)[ P_K(E)- P_K(E')]+\alpha s(1-s)\big[P_K(E')- P_K(tE')\big].\label{eq:mult}
\end{align}
We thus need to bound the last two term in the right hand side.
First, using Lemma \ref{PJ(tE)}, Young's inequality and  \eqref{eq:muV} we can write
\begin{align}
\alpha s(1-s) [P_K(E')- P_K(tE')] & = \alpha s(1-s) \left[P_K(E')- P_K\left((1+\lambda)^{\frac 1 n}E'\right) \right] \nonumber \\
& \leq C\alpha \lambda P(E')^s|E'|^{1-s} \nonumber \\
& \leq C \alpha \lambda P(E')^sm^{1-s}\nonumber  \\
&\leq C \lambda P(E')+C \lambda(n\omega_n \alpha ) ^{\frac{1}{1-s}}m\label{eq:PKEL}   \\
&\leq C \lambda P(E')+C  (n\omega_n  \alpha ) ^{\frac{1}{1-s}}V(r). \nonumber
\end{align}
Next, we write (using the definition of $P_K$ and \eqref{eq:KP}):
\begin{align*}
\alpha s(1-s)(P_K(E)-P_K(E'))&=2 \alpha s(1-s)\left(\int_{E}{\int_{E^c}{K(x-y)dxdy}}-\int_{E\setminus B_r}{\int_{(E\setminus B_r)^c}{K(x-y)dxdy}}\right)\\
&\leq 2 \alpha s(1-s)\int_{B_r\cap E}{\int_{(B_r\cap E)^c}{K(x-y)dxdy}}\\
&\leq \alpha s(1-s) P_K(B_r\cap E)\\
& \leq  \frac{1}{2} P(B_r\cap E) +   \big( n\omega_n \alpha \big)^{\frac{1}{1-s}} |B_r\cap E| \\
& \leq  \frac{1}{2}\big(A(r)+S(r)\big)  + \big( n\omega_n \alpha \big)^{\frac{1}{1-s}}V(r) .
\end{align*} 
Going back to \eqref{eq:mult}, we deduce:
$$
S(r)-A(r)
\leq C\lambda P(E')+  \frac{1}{2}\big(A(r)+S(r)\big) + C \big( n\omega_n  \alpha \big)^{\frac{1}{1-s}}V(r) 
$$
and so
%Plugging this inequality into \eqref{eq:SAS} implies:
%\begin{equation*}
%S(r)-A(r)
%\leq  2\lambda P(E')+\frac{1}{2}(S(r)+A(r))+ \big( C \alpha \big)^{\frac{1}{1-s}}V(r)
%\end{equation*} 
%that is 
$$
 S(r)
\leq  C\lambda P(E')+C A(r)+  C \big(n\omega_n  \alpha \big)^{\frac{1}{1-s}}V(r) 
$$
Finally, we have (using \eqref{eq:perbound}):
\[
P(E')= P(E)-S(r)+A(r)\leq C \left(1+ (n\omega_n  \gamma)^{\frac{1}{1-s}} \right) m^\frac{n-1}{n} 
+A(r),
 \]
and so
\begin{equation*}
\begin{aligned}
 S(r)
%&\leq 3\mu (Cm^{\frac{n-1}{n}}-S(r)+A(r))+C\mu \alpha^{\frac{1}{1-s}}m\\
%&+\frac{1}{2}(S(r)+A(r))+C_4\alpha^{\frac{1}{1-s}}V(r)\\
&\leq C A(r) +C \big( n\omega_n  \alpha \big)^{\frac{1}{1-s}}V(r)+C\lambda \left(1+ (n\omega_n  \gamma)^{\frac{1}{1-s}} \right) m^\frac{n-1}{n} \\
&\leq CA(r) +  C\left(1+ (n\omega_n  \gamma)^{\frac{1}{1-s}} \right) m^\frac{-1}{n} V(r)
\end{aligned}
\end{equation*} 
(where we used \eqref{eq:muV} and the fact that $\gamma = \alpha m^{\frac{1-s}{n}}$).
Using \eqref{eq:eps0}, this implies
\begin{equation}\label{eq:SA}
S(r) 
 \leq CA(r)+C\eps_0^{1/n} V(r)^\frac{n-1}{n} .
\end{equation}
 
In order to conclude, we combine \eqref{eq:SA} with the isoperimetric inequality \eqref{eq:iso}, which gives
\[
S(r)+A(r)\geq \nu_n V(r)^{\frac{n-1}{n}},
\]
and the fact that
\[
V'(r)=A(r) \qquad\mbox{for a.e. $r>0$},
\]
to conclude:
\begin{align*}
\nu_nV(r)^{\frac{n-1}{n}}&\leq CA(r)+C\eps_0^{1/n} V(r)^\frac{n-1}{n}  \\
&\leq CV'(r)+C\eps_0^{1/n} V(r)^\frac{n-1}{n}.
\end{align*}
%Since $\mu=\frac{V(r)}{m-V(r)}$, then when $r<C_2r_0$ with $C_2=\min\{\frac{C_n}{48C},\frac{1}{2}\}$, we have $V(r)<C_2^nm$ and thus
%\[
%6C\mu m^{\frac{n-1}{n}}\leq\frac{1}{4}C_nV(r)^{\frac{n-1}{n}}.
%\] 
%Also, when $\alpha m^{\frac{1-s}{n}}<\gamma_2$,
%\[
%2C \mu \alpha^{\frac{1}{1-s}}m\leq \frac{1}{12C}\alpha^{\frac{1}{1-s}}m^{\frac{1}{n}}C_nV(r)^{\frac{n-1}{n}}\leq \frac{1}{4}C_nV(r)^{\frac{n-1}{n}}.
%\]
%When $r<C_2r_0$ and $\alpha m^{\frac{1-s}{n}}<\gamma_2$,
%\[
%2C_4\alpha^{\frac{1}{1-s}}V(r)\leq \frac{1}{4}C_nV(r)^{\frac{n-1}{n}}.
%\]
%Thus from \eqref{V'(r)geq} we obtain the following ODE for $V(r)$ when $\gamma<\gamma_2$ and $0<r<C_2r_0$,
Choosing $\eps_0$ small enough, we thus have
$$
V'(r)\geq \frac{\nu_n}{2} V(r)^{\frac{n-1}{n}},\qquad\mbox{for a.e. $0<r<\bar r$}
$$
with $V(0)=0$, which implies
\[
V(\bar r)\geq \left(\frac{\nu_n}{2n}\right)^n \,{\bar r}^n 
\]
and the result follows.
\end{proof}

Proceeding similarly, we can also prove the following non-degeneracy lemma for $E^c$.
\begin{lemma}\label{nondegeneracyEC}
Let $E$ be a minimizer of \eqref{eq:min}. There exists a constant $c$ depending only on $n$ such that
\[
|E^c \cap B_r(x_0)|\geq c \min \left\{m, \frac{m}{(n\omega_n \gamma)^\frac{n}{1-s}}  ,  r^n  \right\} 
\]
for every $x_0\in \partial E$ and $r>0$. 
\end{lemma}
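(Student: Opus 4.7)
The plan is to mirror the proof of Lemma \ref{nondegeneracyE} with the roles of $E$ and $E^c$ swapped: instead of removing $E\cap B_r(x_0)$ and \emph{dilating} to restore the volume, we fill in the complementary portion and \emph{contract}. Specifically, set $V(r)=|E^c\cap B_r(x_0)|$, $\bar A(r)=\H^{n-1}(\partial B_r(x_0)\cap E^c)$, $S(r)=P(E,B_r(x_0))$, $\lambda(r)=V(r)/m$, and $E'=E\cup B_r(x_0)$, so that $P(E')=P(E)-S(r)+\bar A(r)$ (the interface inside $B_r$ is replaced by the portion of $\partial B_r$ lying in $E^c$). Choose $t=(m/(m+V(r)))^{1/n}\leq 1$ so that $|tE'|=m$. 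Because $t\leq 1$, the perimeter scaling is now favourable ($P(tE')=t^{n-1}P(E')\leq P(E')$), and the minimality inequality $\J_\alpha(E)\leq\J_\alpha(tE')$ yields directly
\[
S(r)\leq \bar A(r)+\alpha s(1-s)\bigl[P_K(E)-P_K(E')\bigr]+\alpha s(1-s)\bigl[P_K(E')-P_K(tE')\bigr].
\]
Note that no term $\lambda P(E')$ appears at this stage from the perimeter scaling itself, unlike in the original argument.

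The two $P_K$ error terms are then estimated exactly as in the proof of Lemma \ref{nondegeneracyE}. A direct computation, expanding the definitions with $A=B_r\cap E^c$ and $E'=E\sqcup A$, gives
\[
P_K(E)-P_K(E')=P_K(B_r\cap E^c)-4\int_{B_r\cap E^c}\!\int_{E^c\setminus B_r}K\,dx\,dy\leq P_K(B_r\cap E^c),
\]
the analogue of the bound used in the original proof. Combined with \eqref{eq:KP} this produces the bound $\tfrac12(\bar A(r)+S(r))+(n\omega_n\alpha)^{1/(1-s)}V(r)$. For the second error term, Lemma \ref{PJ(tE)} applied with dilation factor $1/t=(1+\lambda)^{1/n}$ gives $\alpha s(1-s)|P_K(E')-P_K(tE')|\leq C\alpha\lambda P(E')^s m^{1-s}$; Young's inequality together with \eqref{eq:perbound} then reduces this to $C(1+(n\omega_n\gamma)^{1/(1-s)})m^{-1/n}V(r)$ plus a term $C\lambda\bar A(r)$ that is absorbed into the main $\bar A(r)$ term for small $\lambda$.

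Collecting everything and moving $\tfrac12 S(r)$ to the left leaves $S(r)\leq C\bar A(r)+C(1+(n\omega_n\gamma)^{1/(1-s)})m^{-1/n}V(r)$, identical in form to the intermediate estimate reached in the proof of Lemma \ref{nondegeneracyE}. From there the rest of the argument is unchanged: assuming $V(\bar r)\leq\eps_0\min\{1,(n\omega_n\gamma)^{-n/(1-s)}\}m$ for a suitably small $\eps_0=\eps_0(n)$, this becomes $S(r)\leq C\bar A(r)+C\eps_0^{1/n}V(r)^{(n-1)/n}$, and combining with the isoperimetric inequality applied to $E^c\cap B_r(x_0)$ (which gives $\nu_n V(r)^{(n-1)/n}\leq \bar A(r)+S(r)$) together with $V'(r)=\bar A(r)$ yields the ODE $V'(r)\geq cV(r)^{(n-1)/n}$, which integrates from $V(0)=0$ to $V(\bar r)\geq c\bar r^n$. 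The only real point requiring care is verifying that the cross-term in the identity for $P_K(E)-P_K(E')$ has the correct sign even though now $E\subseteq E'$ rather than $E'\subseteq E$; the explicit calculation above shows that the extra piece one picks up is nonnegative and can simply be dropped, so there is no serious obstacle.
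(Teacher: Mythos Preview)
Your proposal is correct and follows essentially the same approach as the paper's proof: define $E'=E\cup B_r(x_0)$, contract by $t=(m/(m+V))^{1/n}\le 1$, use minimality, bound $P_K(E)-P_K(E')\le P_K(E^c\cap B_r)$ via \eqref{eq:KP}, and control $P_K(E')-P_K(tE')$ through Lemma~\ref{PJ(tE)} combined with Young's inequality and \eqref{eq:perbound}, arriving at the same differential inequality $V'(r)\ge cV(r)^{(n-1)/n}$. Your explicit identity $P_K(E)-P_K(E')=P_K(B_r\cap E^c)-4\int_{B_r\cap E^c}\int_{E^c\setminus B_r}K$ and your observation that applying Lemma~\ref{PJ(tE)} to $tE'$ with dilation factor $1/t=(1+\lambda)^{1/n}$ are both correct and make the argument slightly more transparent than the paper's terse ``with Lemma~\ref{PJ(tE)} (for $t\le 1$)''.
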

Note that the result also holds, for small enough $r$, if $E$ is a local minimizer.

\begin{proof}
Given $r>0$, we define
$$
A(r)=H^{n-1}(\partial B_r(x_0)\cap E^c),
\quad
S(r)=P(E,B_r(x_0)),
\quad
V(r)=|E^c\cap B_r(x_0)|,
$$
%We know $V(0)=0$ and $V(r)\leq |B_1|r^n$ is monotone non-decreasing in $r$. We want to prove when $r<\frac{1}{2}r_0$,
%\[
%V(r)\geq Cr^n.\\
%\]
and define the set
\[
E'=E\cup B_r(x_0),
\quad F=tE'
\]
with $t$ chosen so that $|F|=|E|=m$, that is
$t=(\frac{m}{m+V(r)})^{1/n}\leq 1$. Let $t=(1-\lambda(r))^{1/n}$ with $\lambda(r):=\frac{V(r)}{m+V(r)}$.
Since $E$ is a minimizer of $\J_\alpha$, we have
\[
\J_{\alpha}(E)\leq \J_{\alpha}(F),
\]
which implies

\begin{equation*}
\begin{aligned}
P(E)-\alpha s(1-s ) P_K(E)&\leq t^{\frac{n-1}{n}}P(E')-\alpha s(1-s) P_K(tE')\\
&\leq P(E')-\alpha s(1-s) P_K(E')+\alpha s(1-s) [P_K(E')-  P_K(tE')].\\
\end{aligned}
\end{equation*} 
With Lemma \ref{PJ(tE)} (for $t\leq 1$) and $\lambda(r)\leq \frac{V(r)}{m}$ we have
\begin{equation*}
\begin{aligned}
\alpha s(1-s)(P_K(E')- P_K(tE'))&\leq C\alpha \lambda P(E')^s|E'|^{1-s}\\
&\leq C\lambda P(E')+C\lambda (n\omega_n\alpha)^{\frac{1}{1-s}}m\\
&\leq C\lambda P(E')+C (n\omega_n\alpha)^{\frac{1}{1-s}}V(r).
\end{aligned}
\end{equation*} 

Therefore, using the definitions of $S(r), A(r)$ and $V(r)$,
\begin{equation*}
\begin{aligned}
S(r)-A(r)&=P(E)-P(E')\\
&\leq \alpha s(1-s)(P_K(E)- P_K(E'))+\alpha s(1-s)(P_K(E')- P_K(tE'))\\
&\leq \alpha s(1-s)P_K(E^c\cap B_r(0))+C\lambda P(E')+C (n\omega_n\alpha)^{\frac{1}{1-s}}V(r)\\
%&\leq \alpha s(1-s) P_s(E^C\cap B_r(0))\\
%&\leq \alpha C(A(r)+S(r))^sV(r)^{1-s}\\
&\leq \frac{1}{2}(A(r)+S(r))+C\lambda P(E')+C(n\omega_n \alpha)^{\frac{1}{1-s}}V(r).
\end{aligned}
\end{equation*} 
Thus,
\begin{equation}\label{P(E,B_r)}
\begin{aligned}
S(r)+A(r)&\leq 4A(r)+C\lambda P(E')+C(n\omega_n\alpha)^{\frac{1}{1-s}}V(r).
\end{aligned}
\end{equation} 
Finally, we have (using \eqref{eq:perbound}):
\[
P(E')= P(E)-S(r)+A(r)\leq C \left(1+ (n\omega_n \gamma)^{\frac{1}{1-s}} \right) m^\frac{n-1}{n} 
+A(r),
 \]
and so
\begin{equation*}
\begin{aligned}
S(r)+A(r)&\leq CA(r)+C\left(1+ (n\omega_n \gamma)^{\frac{1}{1-s}} \right) m^\frac{-1}{n} V(r),
\end{aligned}
\end{equation*} 
(where we used $\lambda(r)\leq \frac{V(r)}{m}$ and the fact that $\gamma = \alpha m^{\frac{1-s}{n}}$).

By the isoperimetric inequality and using the fact that $V'(r)=A(r)$, we obtain
$$
\nu_nV(r)^{\frac{n-1}{n}}\leq S(r)+A(r)\leq CV'(r)+C\left(1+ (n\omega_n \gamma)^{\frac{1}{1-s}} \right) m^\frac{-1}{n} V(r).
$$
If $V(r)\leq \eps_0\min\left\{ m, \frac{m}{(n\omega_n\gamma)^{\frac{n}{1-s}}} \right\}$, we deduce (since $r\to V(r)$ is non-decreasing)
$$
\nu_nV(s)^{\frac{n-1}{n}}\leq CV'(s)+C\eps_0^{1/n}V(s)^{\frac{n-1}{n}} \quad\mbox{ for all $s\leq r$}
$$
and so choosing $\eps_0$ small enough
$$
 V'(s) \geq \frac{\nu_n}{2}V(s)^{\frac{n-1}{n}}\quad\mbox{ for all $s\leq r$}
$$
which implies that $V(r) \geq cr^n$. We thus have, for all $r$, 
$$ 
V(r) \geq \min \left\{ c  r^n ,  \eps_0 m, \eps_0\frac{m}{(n\omega_n\gamma)^{\frac{n}{1-s}}}  \right\}
$$
and the result follows.
\end{proof}

The non-degeneracy estimate allows us to prove:
\begin{proposition}\label{lem:indecomposable}
Let $E$ be a minimizer of \eqref{eq:min}. 
Then $E$ is bounded, indecomposable and its diameter satisfies
\begin{equation}\label{eq:diam}
\mathrm{diam}(E) \leq  C\max\left\{1,(n\omega_n\gamma)^{\frac{1}{1-s}}\right\}^{n-1}  m^{1/n}
\end{equation}
\end{proposition}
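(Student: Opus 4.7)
The plan is to prove the three claims in the order (1) indecomposability, (2) boundedness, (3) diameter bound, with (2) and (3) relying on the non-degeneracy estimate of Lemma~\ref{nondegeneracyE} combined with classical BV-slicing identities.

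For (1), I argue by contradiction. Assuming $E = E_1 \sqcup E_2$ with $|E_1|,|E_2|>0$ and $P(E) = P(E_1) + P(E_2)$, I translate $E_2$ by $v \in \R^n$ and set $E_v := E_1 \cup (E_2 + v)$. For $|v|$ large enough, $E_1, E_2+v$ are disjoint with $P(E_v) = P(E_1)+P(E_2) = P(E)$, and by \eqref{eq:PKAB},
$$\J_\alpha(E_v) - \J_\alpha(E) = 4\alpha s(1-s)\bigl(I_v - I_0\bigr),\quad I_v := \int_{\R^n}\int_{\R^n} K(x-y)\chi_{E_1}(x)\chi_{E_2+v}(y)\,dx\,dy.$$
Unless $K\equiv 0$ (in which case $\J_\alpha = P$ and the unique minimizer is a ball, hence indecomposable), $K$ is strictly positive on some ball around the origin, so $I_0 > 0$; and since $K(x) \le |x|^{-n-s}$ is integrable away from the origin while $|E_i|<\infty$, $I_v \to 0$ as $|v|\to\infty$ by dominated convergence. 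For $|v|$ large this gives $\J_\alpha(E_v) < \J_\alpha(E)$, contradicting minimality.

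For (2), define $r_0 := c_0 \min\{m^{1/n},(n\omega_n\alpha)^{-1/(1-s)}\}$ with $c_0$ small enough that Lemma~\ref{nondegeneracyE} gives $|E\cap B_{r_0}(x_0)|\ge c\,r_0^n$ at every $x_0\in\partial E$. If $E$ were unbounded, $\partial E$ would be unbounded as well (since $|E|<\infty$ precludes $E\supset\R^n\setminus B_R$), so a maximal packing argument produces an infinite sequence $\{x_k\}\subset\partial E$ with $|x_k-x_j|\ge 2r_0$ for $k\neq j$. The balls $B_{r_0}(x_k)$ are then pairwise disjoint and each contains mass $\ge c\, r_0^n$ of $E$, forcing $m=|E|=\infty$, a contradiction.

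For (3), let $d=\mathrm{diam}(E)$ and pick $e\in S^{n-1}$ with $L := \sup_{x\in E}x\cdot e - \inf_{x\in E}x\cdot e \ge d/2$; after translation, assume $\inf = 0$. Set $H_t := \{x\cdot e = t\}$, $E_t^\pm := \{x\in E : \pm(x\cdot e-t)>0\}$, and $A(t) := \H^{n-1}(E\cap H_t)$. The standard slicing identity $P(E_t^+)+P(E_t^-) = P(E)+2A(t)$ holds for a.e.\ $t$; since $E$ is indecomposable and $|E_t^\pm|>0$ for $t\in(0,L)$, we must have $A(t)>0$ for a.e.\ $t\in(0,L)$ (otherwise $\{E_t^+,E_t^-\}$ would be a nontrivial decomposition with $P(E)=P(E_t^+)+P(E_t^-)$). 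For $i=0,1,\ldots,N-1$ with $N=\lfloor L/(2r_0)\rfloor$, pick $t_i\in(2ir_0,(2i+1)r_0)$ with $A(t_i)>0$; since $E$ is bounded (by step (2)) and $A(t_i)>0$, the slice $E\cap H_{t_i}$ is a bounded subset of $H_{t_i}$ of positive $\H^{n-1}$-measure, whose topological boundary in $H_{t_i}$ is non-empty and contained in $\partial E$. Picking $x_i$ in this boundary, the balls $B_{r_0}(x_i)$ are pairwise disjoint (centers in hyperplanes at distance $\ge 2r_0$), so
$$m=|E|\ge\sum_{i=0}^{N-1}|E\cap B_{r_0}(x_i)|\ge Nc\,r_0^n\ge c'L r_0^{n-1},$$
yielding $d\le 2L\le Cm/r_0^{n-1} = Cm^{1/n}\max\{1,(n\omega_n\gamma)^{1/(1-s)}\}^{n-1}$, as claimed.

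The main delicate point I anticipate is the measure-theoretic justification for selecting a point $x_i$ on the slice $H_{t_i}$ that truly lies in $\partial E$ in a sense for which Lemma~\ref{nondegeneracyE} applies. This should follow from standard BV-slicing theory by choosing $t_i$ outside several null sets, so that the slice of $\chi_E$ on $H_{t_i}$ is the characteristic function of a set of finite perimeter in $H_{t_i}$ whose topological boundary is contained in $\partial E$.
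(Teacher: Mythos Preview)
Your overall strategy is sound and close to the paper's, but the order of steps (1) and (2) must be swapped. In your indecomposability argument you assert that for $|v|$ large enough $E_1$ and $E_2+v$ are disjoint with $P(E_v)=P(E_1)+P(E_2)$; this requires $E_1,E_2$ to be bounded, which you only establish in step (2). Without boundedness one only gets $|E_1\cap(E_2+v)|\to 0$, so $|E_v|$ need not equal $m$ exactly, and the competitor is not admissible without a rescaling that introduces extra terms. The paper proceeds exactly by proving boundedness first (via the non-degeneracy lemma, just as you do) and then using it in the translation argument for indecomposability. Since your step (2) does not use indecomposability, the fix is immediate.

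For the diameter bound the paper takes a shorter route: having covered $E$ by at most $N=C\max\{1,(n\omega_n\gamma)^{1/(1-s)}\}^n$ balls of radius $\bar r_0=\min\{1,(n\omega_n\gamma)^{-1/(1-s)}\}m^{1/n}$, and knowing $E$ is indecomposable, it invokes a chaining lemma (Lemma~7.2 in \cite{muratov-general}) to conclude $\mathrm{diam}(E)\le CN\bar r_0$, which gives \eqref{eq:diam}. Your slicing argument is a legitimate self-contained alternative leading to the same bound, with two small repairs. First, with $t_i\in(2ir_0,(2i+1)r_0)$ one only has $t_{i+1}-t_i>r_0$, not $\ge 2r_0$, so either use balls of radius $r_0/2$ or space the $t_i$ further apart; this only affects constants. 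Second, the point you flag is genuinely delicate as written (passing from the boundary of a slice in $H_{t_i}$ to $\partial E$ in $\R^n$), but can be bypassed cleanly: instead of working on a hyperplane, note that each open slab $S_i$ between suitably spaced hyperplanes satisfies $0<|E\cap S_i|<|S_i|=\infty$, hence $\chi_E$ is not a.e.\ constant on the connected open set $S_i$, so $P(E,S_i)=\H^{n-1}(\partial^*E\cap S_i)>0$ and you may pick $x_i\in\partial^*E\cap S_i$ directly, to which Lemma~\ref{nondegeneracyE} applies.
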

\begin{proof}
Let $\bar r_0 = \min\{1,(n\omega_n \gamma)^{-\frac{1}{1-s}}\} m^{1/n}$, then the non-degeneracy estimate \eqref{eq:ndg}
implies
$$
|E\cap B_{\bar r_0}(x_0)|\geq c \bar r_0^n  \qquad \mbox{ for all } x_0\in E.
$$
In particular we can cover $E$ with at most $N:=C \frac{m}{\bar r_0^n} = C \max\{1,(n\omega_n \gamma)^{\frac{1}{1-s}}\}^n  $ ball $B_{\bar r_0}$ and $E$ is thus bounded.

To prove that the minimizer $E$ is indecomposable, suppose the opposite is true and that there exist two sets of finite perimeter $E_1$ and $E_2$ such that $E_1\cap E_2=\emptyset$ and $E=E_1\cup E_2$ with $P(E) = P(E_1) + P(E_2)$. Since $E$ is bounded when $\gamma<\gamma_0$, we have $E_1$ and $E_2$ are bounded. Define $E_R=E_1\cap (E_2+e_1 R)$. We have $|E_R|=m$ and $P(E_R) = P(E)$ for $R>0$ sufficiently large, and the nonlocal energy decreases
\[
P_K(E_R)=P_K(E_1)+P_K(E_2)-2\int_{E_1}{\int_{E_2+e_1R}K(x-y)dxdy},
\]
which contradicts the minimizing property of $E$.

Finally, proceeding as  in \cite{muratov-general} Lemma 7.2 we can show that the diameter of $E$ is bounded by $CN \bar r_0$ which leads to \eqref{eq:diam}.

\end{proof}

Classically, the non-degeneracy results also imply the following bound, which will be useful in the proof of Theorem \ref{regularity} below:
\begin{corollary}\label{P(E,B_r)bdd}
If $E$ is a minimizer of the energy $\J_{\alpha}(F)= P(F)-\alpha s(1-s)P_K(F) $, with the constraint $|F|=m$, then
, then there exists a universal constant $C_3>0$ so that for every $x\in \partial E$,
\[
P(E,B_r(x))\leq  C\left(1+(n\omega_n \gamma)^\frac{1}{1-s} \right) r^{n-1} , \mbox{ for all $r>0$.}
\]
\end{corollary}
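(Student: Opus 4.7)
My plan is to recycle the outer-competitor argument from the proof of Lemma~\ref{nondegeneracyE}, stopping short of the isoperimetric/ODE closing step. Fix $x \in \partial E$ and $r > 0$, and set $V(r) = |E \cap B_r(x)|$, $A(r) = H^{n-1}(\partial B_r(x) \cap E)$, and $S(r) = P(E, B_r(x))$. I would split on whether $V(r) \leq m/2$ or $V(r) > m/2$.

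In the first regime, I would take the competitor $E' = E \setminus B_r(x)$, rescale it to $F = (1+\lambda)^{1/n} E'$ with $\lambda = V(r)/(m - V(r)) \leq 2 V(r)/m$, and use $\J_\alpha(E) \leq \J_\alpha(F)$. Reproducing verbatim the chain of estimates in the proof of Lemma~\ref{nondegeneracyE} (invoking Lemma~\ref{PJ(tE)} for $P_K(E')-P_K(tE')$ and the Gagliardo--Nirenberg type bound \eqref{eq:KP} for $P_K(E)-P_K(E')$), and combining the two nonlocal scaling contributions through the identity $(n\omega_n \alpha)^{1/(1-s)} = (n\omega_n \gamma)^{1/(1-s)} m^{-1/n}$, I would extract the intermediate inequality
\[
S(r) \leq C A(r) + C \bigl(1 + (n\omega_n \gamma)^{\frac{1}{1-s}}\bigr) m^{-\frac{1}{n}} V(r).
\]
From here I would substitute the trivial geometric bound $A(r) \leq n \omega_n r^{n-1}$ and examine two sub-cases: for $r \leq m^{1/n}$, $V(r) \leq \omega_n r^n \leq \omega_n m^{1/n} r^{n-1}$; for $r \geq m^{1/n}$, $V(r) \leq m/2 \leq (m^{1/n}/2) r^{n-1}$. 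In either case $V(r) \leq C m^{1/n} r^{n-1}$, so both terms on the right collapse to $C(1 + (n\omega_n \gamma)^{1/(1-s)}) r^{n-1}$, which is the claim.

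In the second regime, $V(r) > m/2$ forces $\omega_n r^n \geq V(r) > m/2$, hence $r \geq (m/(2\omega_n))^{1/n}$, so $m^{(n-1)/n} \leq C r^{n-1}$. The a priori perimeter estimate \eqref{eq:perbound} from Lemma~\ref{lem:prelbound} (which applies to $E$ since $\J_\alpha(E) \leq \J_\alpha(B_{r_0})$) then gives $S(r) \leq P(E) \leq C(1 + (n\omega_n \gamma)^{1/(1-s)}) m^{(n-1)/n} \leq C(1 + (n\omega_n \gamma)^{1/(1-s)}) r^{n-1}$, as required.

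The only delicate point is the bookkeeping needed to convert the mixed factor $(n\omega_n\alpha)^{1/(1-s)} V(r)$, which a priori scales like $r^n$, into the desired $r^{n-1}$ via the constraint $V(r) \leq C m^{1/n} r^{n-1}$. The genuinely new ingredient beyond Lemma~\ref{nondegeneracyE} is simply that the differential inequality $V' \geq c V^{(n-1)/n}$ (used there to extract non-degeneracy from below) is replaced by the pointwise upper bound $A(r) \leq n \omega_n r^{n-1}$; this is enough because we now want an upper bound on $S$ rather than a lower bound on $V$.
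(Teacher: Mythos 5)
Your proof is correct and is essentially the same argument the paper uses: stop the competitor estimate from the non-degeneracy lemmas at the intermediate inequality
\[
S(r)\leq C A(r)+C\bigl(1+(n\omega_n\gamma)^{\frac{1}{1-s}}\bigr)m^{-1/n}V(r),
\]
then replace the isoperimetric/ODE step by the trivial bounds $A(r)\leq n\omega_n r^{n-1}$ and $V(r)\leq C\min\{m,|B_r|\}\leq Cm^{1/n}r^{n-1}$.

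The one substantive difference from the paper is which competitor you run. You reuse the removal competitor $E'=E\setminus B_r(x)$ from Lemma~\ref{nondegeneracyE}, where $\lambda=V(r)/(m-V(r))$ blows up as $V(r)\to m$; this forces the case split at $V(r)=m/2$ (also needed so that Lemma~\ref{PJ(tE)} with $t=\lambda\le1$ applies), with the complementary regime handled directly via \eqref{eq:perbound}. The paper instead invokes \eqref{P(E,B_r)} from Lemma~\ref{nondegeneracyEC}, the filling competitor $E'=E\cup B_r(x)$, where $\lambda=V(r)/(m+V(r))\leq 1$ automatically, so no split is needed. Either way works, and your handling of the $\lambda P(E')$ term is actually more explicit than the paper's: they quote \eqref{P(E,B_r)} but silently drop the $C\lambda P(E')$ contribution in the displayed chain, whereas you bound it by $CA(r)+C(1+(n\omega_n\gamma)^{1/(1-s)})m^{-1/n}V(r)$ using $\lambda\leq 2V(r)/m$ and \eqref{eq:perbound}. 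So your version fills in a small gap in the paper's exposition while arriving at the same conclusion.
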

\begin{proof}
Using \eqref{P(E,B_r)}, we obtain (using the fact that $V(r)\leq m$ and $V(r)\leq |B_r|$):
\begin{align*}
P(E,B_r(x))=S(r) 
& \leq 3A(r)+(n\omega_n\alpha)^{\frac{1}{1-s}}V(r)\\
& \leq 3P(B_r)+(n\omega_n \alpha)^{\frac{1}{1-s}} m^{\frac 1 n} |B_r|^{\frac{n-1}{n}}\\
& \leq C r^{n-1} + C(n\omega_n \gamma)^\frac{1}{1-s}   r^{n-1}.
\end{align*}
and the result follows.
\end{proof}

Finally, we end this section with the following regularity result:
\begin{theorem}\label{regularity}
Let  $E$ be a minimizer of \eqref{eq:min} with $|E|=m$ and let $r_0$ be such that $|B_{r_0}| =m$.
%There exists $\gamma_1'>0$ depending only on $n$ and $s$ such that if $ \alpha\, m^{\frac{1-s}{n}} \leq \gamma_1'$,
Then the reduced boundary $\partial^*E$ is a $C^{1,\frac{1}{2}(1-s)}$ hypersurface in $\mathbb{R}^n$, and
\[
\mathcal H^k[\partial E\setminus \partial^*E]=0
\]
for all $k>n-8$ (where $\mathcal H^k$ denotes the $k-$Hausdorff measure). 

\item Furthermore, when $n\leq 7$ and $ \alpha\, m^{\frac{1-s}{n}} \leq \gamma_1'$ (small enough, but depending only on $n$ and $s$),
we can write
\[
\partial E=\{x_0+r_0(1+u(x))x:x\in \partial B_1\} 
\]
for some function $u\in C^{1,\frac{1}{2}(1-s)}(\partial B_1)$ with the $C^{1,\frac{1}{2}(1-s)}$ regularity constant depending on $n$ and $s$ only. 
\end{theorem}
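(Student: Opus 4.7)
The approach is classical: I will show that $E$ is an almost-minimizer (an $\omega$-minimizer in Tamanini's sense) of the perimeter with modulus $\omega(r) = C\alpha\, r^{1-s}$, and then invoke the standard interior regularity theorem. Concretely, I want to establish that for every $x_0 \in \R^n$, every sufficiently small $r > 0$, and every competitor $F \subset \R^n$ with $F \Delta E \Subset B_r(x_0)$,
\[
 P(E, B_r(x_0)) \leq P(F, B_r(x_0)) + C\alpha\, r^{n-s}.
\]
The volume constraint $|F| = m$ is dispensed with by perturbing $F$ far from $B_r(x_0)$ (for instance by translating a small auxiliary ball located at distance $\gg r$), at a cost of order $r^n$, which is sub-leading compared to $r^{n-1}$.

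The main calculation is to bound the nonlocal deficit $\alpha s(1-s)|P_K(E) - P_K(F)|$. Starting from the pointwise triangle inequality
\[
\bigl|\,|\chi_E(x) - \chi_E(y)| - |\chi_F(x) - \chi_F(y)|\,\bigr| \leq |\chi_{E \setminus F}(x) - \chi_{E \setminus F}(y)| + |\chi_{F \setminus E}(x) - \chi_{F \setminus E}(y)|,
\]
I get $|P_K(E) - P_K(F)| \leq P_K(E \setminus F) + P_K(F \setminus E)$. The sets $A := E \setminus F$ and $F \setminus E$ are contained in $B_r(x_0)$, so $|A| \leq \omega_n r^n$; using Corollary \ref{P(E,B_r)bdd} and a standard WLOG reduction (the case $P(F, B_r) \geq 2 P(E, B_r)$ makes the desired inequality trivial, and otherwise one can pass to a good radius $r' \in (r, 2r)$) I also secure $P(A) \leq C r^{n-1}$. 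The interpolation inequality \eqref{eq:GN} then yields
\[
\alpha s(1-s) P_K(A) \leq C \alpha\, P(A)^s |A|^{1-s} \leq C\alpha\, r^{(n-1)s + n(1-s)} = C\alpha\, r^{n-s},
\]
closing the quasi-minimality estimate.

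With $E$ identified as an $\omega$-minimizer of the perimeter with $\omega(r) = C\alpha\, r^{1-s}$, Tamanini's regularity theorem delivers at once that $\partial^* E$ is locally $C^{1, (1-s)/2}$ (the Hölder exponent being half the power of $r$ in $\omega$) and that $\mathcal H^k(\partial E \setminus \partial^* E) = 0$ for every $k > n - 8$, proving the first part of the theorem. For the second part, the condition $n \leq 7$ eliminates the singular set and $\partial E = \partial^* E$ is globally $C^{1, (1-s)/2}$. Lemma \ref{lem:prelbound} provides, for $\gamma$ small, a translation such that $|E \Delta B_{r_0}(x_0)| \leq C m \sqrt{\gamma}$; a standard compactness argument for $\omega$-minimizers combined with this $L^1$-closeness forces $\partial E$ to be $C^1$-close to $\partial B_{r_0}(x_0)$, which then permits the normal-graph representation $\{x_0 + r_0(1+u(x))x : x \in \partial B_1\}$, with the uniform $C^{1, (1-s)/2}$ bound on $\partial E$ transferring to $u$.

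The main obstacle I anticipate is the last step: promoting $L^1$-closeness and uniform $C^{1,\sigma}$-regularity of the boundary to a quantitative $C^1$-closeness that is uniform in $\gamma \leq \gamma_1'$ and allows the global graph representation. This is by now standard (see e.g.\ the treatment of quasi-minimal clusters and the liquid drop problem), but requires care so that the $C^{1,(1-s)/2}$ constant of $u$ ends up depending only on $n$ and $s$—the natural scaling invariant regime once one has normalized $|E|=1$.
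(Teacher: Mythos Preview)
Your proposal is correct and follows essentially the same route as the paper: bound the nonlocal deficit $|P_K(E)-P_K(F)|$ via the interpolation inequality \eqref{eq:GN} to show that $E$ is an almost-minimizer of the perimeter with excess $\lesssim \rho^{n-s}$, invoke Tamanini's theorem, and then combine the $L^1$-closeness from Lemma~\ref{lem:prelbound} with an $\omega$-minimizer compactness argument (the paper cites \cite{muratov-general}, Lemma~6.4) for the normal-graph representation. The only difference is how the volume constraint is removed---the paper restores it by globally rescaling the competitor to $tF$ and controlling $P_K(F)-P_K(tF)$ via Lemma~\ref{PJ(tE)}, whereas you sketch a local volume-fixing perturbation far from $B_r(x_0)$; both devices are standard and produce the same $O(r^{n-s})$ error.
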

\begin{proof}
Let $E^*=\frac{1}{r_0}E$ so that $|E^*|=|B_1|$. For every $x^*\in \partial E^*$, define
\[
\Psi(E^*,B_{\rho}(x^*))=P(E^*,B_{\rho}(x^*))-\inf\{P(F^*,B_{\rho}(x^*)),E^*\Delta F^*\subset B_{\rho}(x^*)\}.
\]
The first part of Theorem \ref{regularity} follows from Theorem 1 in \cite{Tamanini1982} if we prove that
\begin{equation}\label{Psi(E^*,B_rho)}
 \Psi(E^*,B_{\rho}(x^*))\leq C\rho^{n-s}=C\rho^{n-1+2(\frac{1}{2}(1-s))}.
 \end{equation} 
%for every $x^*\in \partial E^*$ and $\rho\in(0,\frac{1}{2})$, with the constant $C$ being a universal constant. 
%implies the reduced boundary $\partial^*E^*$ is $C^{1,\frac{1}{2}(1-s)}$ hypersurface in $\mathbb{R}^n$, and
%\[
%H_k[E^*\setminus \partial^*E^*]=0
%\]
%for all $k>n-8$. Therefore, our aim is 
In order to prove \eqref{Psi(E^*,B_rho)}, we first note that 
%for every $x\in \partial E$, define
%\[
%\Psi(E,B_{\rho}(x))=P(E,B_{\rho}(x))-\inf\{P(F,B_{\rho}(x)),E\Delta F\subset B_{\rho}(x)\},
%\]
%and 
we have the following scaling property
\begin{equation}\label{eq:scaling2}
\Psi(E^*,B_{\frac{\rho}{r_0}}(x^*))=\frac{1}{r_0^{n-1}}\Psi(E,B_{\rho}(x))
\end{equation}
with $x=r_0x^*$. 
We then fix a set $F\subset \mathbb{R}^n$ such that $E\Delta F\subset B_{\rho}(x)$. Without loss of generality, we may assume   
\begin{equation}\label{WLOGF<E}
P(F,B_{\rho}(x))\leq P(E,B_{\rho}(x))
\end{equation}
since otherwise \eqref{Psi(E^*,B_rho)} is trivial. We now set $F'=tF$ with $t$ chosen so that
$|F'|=|tF|=|E|=m$. In particular, we have
\begin{equation}\label{eq:tn}
|t^n -1 | \leq 2\left(\frac \rho {r_0}\right)^{n} \qquad \mbox{ if } \rho< \frac{1}{2}r_0
\end{equation}
and since $E$ is a minimizer of $\J_{\alpha}$ in $\E_m$,
we can write:
\[
P(E)-P(tF)\leq \alpha s(1-s)(P_K(E)-P_K(tF)).
\]
%The proof of \eqref{Psi(E^*,B_rho)} now depends on whether $t\leq 1$ or $t>1$.
%\medskip
%\noindent {\it Case 1:} If $t\leq 1$, then for every $\rho\in(0,\frac{1}{2}r_0)$, we have:
and so
\begin{equation*}
\begin{aligned}
P(E,B_{\rho}(x))-P(F,B_{\rho}(x))&=P(E)-P(tF)+P(tF)-P(F)\\
&= P(E)-P(tF)+(t^{n-1}-1)P(F)\\
&\leq \alpha s(1-s)(P_K(E)-P_K(tF))+(t^{n-1}-1)P(F)\\
&\leq \alpha s(1-s)(P_K(E)-P_K(F))+\alpha s(1-s)(P_K(F)-P_K(tF))+(t^{n-1}-1)P(F)
\end{aligned}
\end{equation*}
For the first term, we use \eqref{WLOGF<E} and Corollary \ref{P(E,B_r)bdd} to get
\begin{equation*}
\begin{aligned}
\alpha s(1-s) (P_K(E)-P_K(F))
&\leq \alpha C P(E\Delta F)^s|E\Delta F|^{1-s}\\
&\leq \alpha C (P(E,B_{\rho}(x))+P(F,B_{\rho}(x))+C\rho^{n-1})^{s}\rho^{n(1-s)}\\
&\leq C\alpha \left(1+ \gamma^\frac{1}{1-s} \right)^s \rho^{n-s}
\end{aligned}
\end{equation*}
for some constant $C$ depending on $n$ and $s$.
The second term is bounded by Lemma \ref{PJ(tE)} and \eqref{eq:tn}:
\begin{equation*}
\begin{aligned}
\alpha s(1-s) (P_K(F)-P_K(tF))
&\leq C \alpha |1-t^n|P(F)^s|F|^{1-s}\\
&\leq C \alpha \frac{\rho^n}{ m}P(E)^s\left(\frac{|E|}{t^n}\right)^{1-s}\\
&\leq C\alpha \left(1+  \gamma^\frac{1}{1-s} \right)^s \frac{\rho^n}{m}m^{\frac{n-1}{n}s}m^{1-s}t^{-n(1-s)}\\
&\leq C\alpha  \left(1+ \gamma^\frac{1}{1-s}  \right)^s\rho^{n-s}\rho^sr_0^{-s}\\
&\leq C\alpha  \left(1+  \gamma^\frac{1}{1-s} \right)^s\rho^{n-s}.
\end{aligned}
\end{equation*}
The last term satisfies
$$ (t^{n-1}-1)P(F) \leq C P(E) \left(\frac \rho {r_0}\right)^{n} \leq C\left(1+  \gamma^\frac{1}{1-s} \right) r_0^{n-1} \left(\frac \rho {r_0}\right)^{n} \leq C\rho^nr_0^{-1} .$$

We deduce:
\[
P(E,B_{\rho}(x))-P(F,B_{\rho}(x))\leq C\alpha \left(1+  \gamma^\frac{1}{1-s} \right)^s \rho^{n-s}+C \left(1+ \gamma^\frac{1}{1-s} \right)\rho^nr_0^{-1},
\]
and since this holds for all $F\subset \mathbb{R}^n$ such that $E\Delta F\subset B_{\rho}(x)$, we get
$$ 
\Psi(E,B_{\rho}(x)) \leq C\alpha  \left(1+  \gamma^\frac{1}{1-s} \right)^s \rho^{n-s} +C \left(1+  \gamma^\frac{1}{1-s} \right) \rho^nr_0^{-1} \qquad \forall \rho < \frac{r_0}{2}.
$$
The scaling property \eqref{eq:scaling2} implies 
\begin{equation}\label{Psi_t>1}
\begin{aligned}
\Psi(E^*,B_{\frac{\rho}{r_0}}(x^*))&=\frac{1}{r_0^{n-1}}\Psi(E,B_{\rho}(x))\\
%&\leq Cr_0^{-n+1}(\alpha\left(1+  \gamma^\frac{1}{1-s} \right)^s \rho^{n-s}+\left(1+  \gamma^\frac{1}{1-s} \right)\rho^nr_0^{-1})\\
&\leq C\alpha r_0^{1-s} \left(1+  \gamma^\frac{1}{1-s} \right)^s \left(\frac{\rho}{r_0}\right)^{n-s}+C\left(1+  \gamma^\frac{1}{1-s} \right) \left(\frac{\rho}{r_0}\right)^{n}\\
&\leq C\gamma \left(1+  \gamma^\frac{1}{1-s} \right)^s \left(\frac{\rho}{r_0}\right)^{n-s}+C\left(1+  \gamma^\frac{1}{1-s} \right) \left(\frac{\rho}{r_0}\right)^{n-s}\\
%&\leq C(1+\alpha r_0^{1-s})\left(\frac{\rho}{r_0}\right)^{n-s}\\
&\leq C\left(1+  \gamma^\frac{1}{1-s} \right)\left(\frac{\rho}{r_0}\right)^{n-s} \qquad \forall \rho < \frac{r_0}{2}
\end{aligned}
\end{equation}
which completes the proof of \eqref{Psi(E^*,B_rho)} and the first part of the theorem  follows from Theorem 1 in \cite{Tamanini1982}.

\medskip

% and it implies the reduced boundary $\partial^* E^*$ is $C^{1,\frac{1}{2}(1-s)}$ with the regularity constant depending on $n$ and $s$ only, and
%\[
%H_k[E^*\setminus \partial^*E^*]=0
%\]
%for all $k>n-8$. 

Next, we note that since $E$ is the minimizer, we have (using \eqref{eq:Dbound} with $\gamma\leq (n\omega_n)^{-1}$):
\[
D(E^*)=D(E)=\frac{P(E)-P(B_{r_0})}{P(B_{r_0})}\leq C \gamma
\]
which implies
\[
|E^*\Delta B_1|\leq C\sqrt{ \gamma}
\]
Therefore, when $n\leq 7$, we can apply Lemma 6.4 in \cite{muratov-general} to prove that there exists $\gamma_1'>0$ so that when $\gamma<\gamma_1'$,
\[
\partial {}E^*=\{x_0+(1+u(x))x:x\in \partial B_1\} 
\]
for $x_0\in \mathbb{R}^n$ the barycenter of $E^*$ and some function $u\in C^{1,\frac{1}{2}(1-s)}(\partial B_1)$ with the $C^{1,\frac{1}{2}(1-s)}$ regularity constant depending on $n$ and $s$ only. This implies that
\[
\partial E=\{x_0+r_0(1+u(x))x:x\in \partial B_1\} 
\]
and concludes the proof of Theorem \ref{regularity}. 
\end{proof}

\section{Proof of Theorem \ref{thm:ball}}\label{sec:ball}
Throughout this section, we assume that $E$ is a minimizer of $\J_\alpha$ with $|E|=m$ and that $\gamma =\alpha\, m^{\frac{1-s}{n}} \leq \gamma_1'$ so that we can write
\[
\partial E=\{r_0(1+u(x))x:x\in \partial B_1\} 
\]
for some function $u\in C^{1,\frac{1}{2}(1-s)}(\partial B_1)$ with the $C^{1,\frac{1}{2}(1-s)}$ regularity constant depending on $n$ and $s$ only (see Theorem \ref{regularity}).

We note that when $\gamma  \leq (n\omega_n)^{-1}$, \eqref{eq:Dbound} and \eqref{eq:diffbound} implies
\[
D(E):=\frac{P(E)-P(B_{r_0})}{P(B_{r_0})}\leq C\gamma \leq C \alpha m^{\frac{1-s}{n}},
\]
and 
\begin{equation}\label{eq:L1convergence}
|E\Delta B_{r_0}|\leq Cm\sqrt{\gamma}\leq Cm\sqrt{\alpha m^{\frac{1-s}{n}}}.
\end{equation}
Using this, we can prove the following estimates on $u$ in $W^{1,\infty}(\partial  B_1)$:
\begin{lemma}\label{Linfinitybound}
Let $E$ be a minimizer of \eqref{eq:min} such that
\[
\partial E=\{r_0(1+u(x))x:x\in \partial B_1\} 
\]
for some function $u\in C^{1,\frac{1}{2}(1-s)}(\partial B_1)$. Then there exists a constant $\gamma_1''>0$ depending only on $n$ and $s$, such that when $\gamma=\alpha m^{\frac{1-s}{n}}\leq \gamma_1''$,
\begin{equation}\label{eq:Linfty}
\| u\|_{L^{\infty}(\partial B_1)}\leq \frac{3}{20n},
\end{equation}
and
\begin{equation}\label{eq:gLinfty}
\| \nabla u\|_{L^{\infty}(\partial B_1)}\leq \frac{1}{2}.
\end{equation}
\end{lemma}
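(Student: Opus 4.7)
The plan is to combine the uniform $C^{1,(1-s)/2}$ regularity of $u$ given by Theorem \ref{regularity} with the $L^1$-closeness \eqref{eq:L1convergence}, and extract the pointwise bounds \eqref{eq:Linfty}--\eqref{eq:gLinfty} by interpolating between these two scales. It is convenient to rescale to $E^* = E/r_0$, so that $|E^*| = \omega_n$ and $\partial E^* = \{(1+u(x))x : x\in\partial B_1\}$ is parametrized by the same function $u$. The scaling of \eqref{eq:L1convergence} then yields
\[
|E^*\Delta B_1| = r_0^{-n}\,|E\Delta B_{r_0}|\leq C\sqrt{\gamma},
\]
while Theorem \ref{regularity} supplies a uniform bound $\|u\|_{C^{1,(1-s)/2}(\partial B_1)}\leq M$ with $M=M(n,s)$.

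First I would convert the symmetric-difference bound into an $L^1$ bound on $u$. Writing the volume in polar coordinates,
\[
|E^*\Delta B_1| = \frac{1}{n}\int_{\partial B_1} |(1+u(\sigma))^n-1|\, d\sigma,
\]
and using that $|u|\leq M$ uniformly (with $1+u>0$, which is necessary for the graph parametrization to be well-defined), the integrand dominates a constant multiple $c(n,M)|u|$, giving $\|u\|_{L^1(\partial B_1)}\leq C(n,s)\sqrt{\gamma}$. A standard interpolation on the compact manifold $\partial B_1$ between $L^1$ and the uniform Lipschitz bound coming from the $C^{1,(1-s)/2}$ regularity then gives
\[
\|u\|_{L^\infty(\partial B_1)} \leq C(n,s)\,\|u\|_{L^1}^{1/n}\,M^{(n-1)/n} \leq C(n,s)\gamma^{1/(2n)},
\]
which can be made smaller than $3/(20n)$ by choosing $\gamma_1''$ sufficiently small.

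The gradient estimate follows from a second interpolation, this time between $L^\infty$ (now small) and $C^{1,\beta}$ (uniformly bounded) with $\beta=(1-s)/2$. The standard inequality
\[
\|\nabla u\|_{L^\infty} \leq C\,\|u\|_{L^\infty}^{\beta/(1+\beta)}\,\|u\|_{C^{1,\beta}}^{1/(1+\beta)}
\]
produces $\|\nabla u\|_{L^\infty}\leq C(n,s)\gamma^{\beta/(2n(1+\beta))}$, which falls below $1/2$ after a further shrinking of $\gamma_1''$.

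The only real subtlety I anticipate is the first step: turning $|E^*\Delta B_1|$ small into $\|u\|_{L^1}$ small requires that $1+u$ be bounded away from $0$, which in turn relies on $\|u\|_\infty$ being a priori bounded --- a fact we have for free from the uniform $C^{1,\beta}$ bound, once $u$ is known to be the graph function over the barycenter of $E^*$ as in Theorem \ref{regularity}. If one wishes to avoid quantitative interpolation altogether, an equivalent contradiction/compactness argument works: any sequence $\gamma_k\to 0$ with $u_k$ violating \eqref{eq:Linfty} or \eqref{eq:gLinfty} would admit, via the uniform $C^{1,\beta}$ bound and Arzel\`a--Ascoli, a $C^1$-convergent subsequence whose limit $u_\infty$ would parametrize a set $E^*_\infty$ with $|E^*_\infty\Delta B_1|=0$, forcing $u_\infty\equiv 0$ and contradicting the hypothesized lower bound on $\|u_k\|_\infty$ or $\|\nabla u_k\|_\infty$.
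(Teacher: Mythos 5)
Your argument is correct, but for the $L^\infty$ bound it takes a genuinely different route from the paper. The paper bounds $|u(x_0)|$ pointwise by applying the non-degeneracy Lemmas \ref{nondegeneracyE} and \ref{nondegeneracyEC} to the ball $B_{\rho/2}(y_0)$ with $y_0=r_0(1+u(x_0))x_0\in\partial E$ and $\rho=r_0|u(x_0)|$: since $B_{\rho/2}(y_0)$ lies entirely inside $E\setminus B_{r_0}$ (resp.\ $E^c\cap B_{r_0}$), non-degeneracy gives $c\rho^n\leq|E\Delta B_{r_0}|\leq Cm\sqrt\gamma$ and hence $|u(x_0)|\leq C\gamma^{1/(2n)}$. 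Your approach instead deduces $\|u\|_{L^1(\partial B_1)}\leq C\sqrt\gamma$ directly from the symmetric-difference bound (indeed $|(1+u)^n-1|\geq|u|$ holds for \emph{every} $u>-1$, so the worry you raise about $1+u$ being bounded below is not actually needed for this step) and then interpolates against the uniform $C^1$ bound from Theorem \ref{regularity}. What your route buys is that it bypasses the non-degeneracy lemmas entirely for \eqref{eq:Linfty}, trading them for the graph regularity of Theorem \ref{regularity}, which is already hypothesized in the lemma. What the paper's route buys is that the pointwise non-degeneracy argument is somewhat more elementary once those lemmas are in hand, and it is the same machinery used repeatedly elsewhere in Section \ref{sec:ndg}. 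For the gradient bound \eqref{eq:gLinfty} both you and the paper do essentially the same thing: the paper's explicit contradiction argument (if $|\nabla u(x_0)|>1/2$, expand $u$ to first order on an annular sector, deduce $|u|\gtrsim 1$ on a set of fixed $\mathcal H^{n-1}$-measure, contradict smallness of $|E\Delta B_{r_0}|$) is exactly what the interpolation $\|\nabla u\|_\infty\lesssim\|u\|_\infty^{\beta/(1+\beta)}\|u\|_{C^{1,\beta}}^{1/(1+\beta)}$ encodes. One small care point in your $L^\infty$ interpolation: on the compact manifold $\partial B_1$ one should allow for the degenerate case where the geodesic ball of radius $\|u\|_\infty/(2\|\nabla u\|_\infty)$ exceeds $\partial B_1$ (which can happen if $\nabla u$ is tiny), but in that regime $\|u\|_\infty\lesssim\|u\|_{L^1}$ trivially, so the conclusion stands.
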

\begin{proof}
Given $x_0\in \partial B_1$, we set $y_0=(1+u(x_0))x_0\in \partial E$. If $\rho=r_0u(x_0)>0$, then consider the set $E\cap B_{\rho/2}(y_0)\subset E\Delta B_{r_0}$. Lemma \ref{nondegeneracyE} implies,
\[
|E\cap B_{\rho/2}(y_0)|\geq C(\rho/2)^n=Cr_0^nu(x_0)^n,
\] 
and so \eqref{eq:L1convergence} yields:
\[
Cr_0^nu(x_0)^n\leq Cm\sqrt{\alpha m^{\frac{1-s}{n}}},
\]
which implies
\[
u(x_0)\leq C(\alpha m^{\frac{1-s}{n}})^{\frac{1}{2n}}\leq \frac{3}{20n},
\]
when $\gamma=\alpha m^{\frac{1-s}{n}} \leq \gamma_1''$ small enough (only depending on $n$ and $s$).
If $\rho=r_0u(x_0)\leq 0$, then consider the set $E^C \cap B_{\rho/2}(y_0)\subset E\Delta B_{r_0}$. By Lemma \ref{nondegeneracyEC} we have:
\[
|E^c \cap B_{\rho/2}(y_0)|\geq C(\rho/2)^n=Cr_0^n|u(x_0)|^n,
\] 
and  \eqref{eq:L1convergence}  gives:
\[
Cr_0^n|u(x_0)|^n\leq Cm\sqrt{\alpha m^{\frac{1-s}{n}}},
\]
which implies
\[
|u(x_0)|\leq C(\alpha m^{\frac{1-s}{n}})^{\frac{1}{2n}}\leq \frac{3}{20n},
\] 
when $\gamma\leq \gamma_1''$ small enough.
This proves \eqref{eq:Linfty}.

\medskip

To prove \eqref{eq:gLinfty}, we proceed by contradiction by  assuming that there exists $x_0\in \partial B_1$ such that $|\nabla u(x_0)|>\frac{1}{2}$. By Theorem \ref{regularity} we have $u\in C^{1,\frac{1-s}{2}}(\partial B_1)$ with the regularity constant depending on $n$ and $s$ only.

If $u(x_0)\geq 0$, then there exists a unit vector $e$ and universal constant $C>0$ such that in the set $\{x\in \partial B_1, e\cdot(x-x_0)\geq \frac{1}{2}|x-x_0| \}$, we have
\begin{equation}
\begin{aligned}
u(x)&\geq u(x_0)+\frac{1}{2}e\cdot(x-x_0)+O(|x-x_0|^{1+\frac{1-s}{2}})\\
&\geq \frac{1}{4}|x-x_0|-C|x-x_0|^{1+\frac{1-s}{2}}.
\end{aligned}
\end{equation}
Then there exists universal constant $\rho_0>0$ such that when $|x-x_0|<\rho_0$ and $e\cdot(x-x_0)\geq \frac{1}{2}|x-x_0|$,
\[
u(x)\geq \frac{1}{8}|x-x_0|.
\]
In the set $\Lambda=\{x\in \partial B_1, e\cdot(x-x_0)\geq \frac{1}{2}|x-x_0|, \rho_0/2<|x-x_0|<\rho_0\}$, we have
\[
u(x)\geq \frac{1}{16}\rho_0.
\] 
If $u(x_0)<0$, then there exists unit vector $e$, universal constants $C,\rho_0>0$ such that in the set $\Lambda=\{x\in \partial B_1, e\cdot(x-x_0)\leq -\frac{1}{2}|x-x_0|, \rho_0/2<|x-x_0|<\rho_0\}$,
\[
u(x)\leq u(x_0)-\frac{1}{4}|x-x_0|+C|x-x_0|^{1+\frac{1-s}{2}}\leq -\frac{1}{8}|x-x_0|\leq -\frac{1}{16}\rho_0,
\]

In both cases, we can thus write
\begin{equation}
\begin{aligned}
Cm\sqrt{\alpha m^{\frac{1-s}{n}}}\geq|E\Delta B_{r_0}|&\geq \int_{\partial B_1}{r_0^n|(1+u(x))^n-1|d\mathcal{H}^{n-1}(x)}\\
&\geq Cm\int_{\Lambda}{|u(x)|d\mathcal{H}^{n-1}(x)}\\
&\geq Cm |\Lambda|\frac{1}{16}\rho_0\\
&\geq Cm\rho_0^{n}
\end{aligned}
\end{equation}
%and
%\begin{equation}
%\begin{aligned}
%Cm\sqrt{\alpha m^{\frac{1-s}{n}}}\geq|E\Delta B_{r_0}|&\geq \int_{\partial B_1}{r_0^n|(1+u(x))^n-1|dH^{n-1}(x)}\\
%&\geq Cm\int_{\Lambda'}{|u(x)|d\mathcal{H}^{n-1}(x)}\\
%&\geq Cm |\Lambda'|\frac{1}{16}\rho_0\\
%&\geq Cm\rho_0^{n}.
%\end{aligned}
%\end{equation}
which implies:
\[
C\sqrt{\alpha m^{\frac{1-s}{n}}}=C\sqrt{\gamma}\geq C\rho_0^{n}
\]
and leads to a contradiction if 
$\gamma <\gamma_1''$ small constant (depending only on $n$ and $s$).
This completes the proof of \eqref{eq:gLinfty}.
\end{proof}

With Lemma \ref{Linfinitybound}, we can apply Theorem 2.1 in \cite{Fuglede} and obtain 
\begin{proposition}\label{Fuglede}
Let $E$ be a minimizer of \eqref{eq:min}
such that
\[
\partial E=\{r_0(1+u(x))x:x\in \partial B_1\} 
\]
for some function $u\in C^{1,\alpha}(\partial B_1)$. There exists a universal constant $\bar \gamma_1>0$ so that when $\gamma<\bar \gamma_1$,
\begin{equation}
D(E)\geq C(\|\nabla u\|^2_{L^{2}(\partial B_1)}+\| u\|^2_{L^{2}(\partial B_1)}).
\end{equation}

\end{proposition}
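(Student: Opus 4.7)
The plan is to apply Fuglede's nearly-spherical-set inequality (Theorem 2.1 in \cite{Fuglede}) to the rescaled set $E^* := E/r_0$ and then transfer the conclusion back to $E$. First I would note that $|E^*|=|B_1|$ and $\partial E^*=\{(1+u(x))x:x\in\partial B_1\}$, and that both the isoperimetric deficit $D$ and the $L^2$ and $\dot H^1$ norms of $u$ on $\partial B_1$ are invariant under this scaling, so the statement for $E$ is equivalent to the same inequality for $E^*$.

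Fuglede's theorem asserts the existence of a dimensional threshold $\delta_0=\delta_0(n)>0$ such that, for any set $E^*$ with $|E^*|=|B_1|$, barycenter at the origin, and boundary of the form $\{(1+u(x))x:x\in\partial B_1\}$ with $\|u\|_{C^1(\partial B_1)}<\delta_0$, one has
\[
D(E^*)\geq C\bigl(\|\nabla u\|_{L^2(\partial B_1)}^2+\|u\|_{L^2(\partial B_1)}^2\bigr).
\]
The barycenter condition is harmless: the energy $\J_\alpha$ is translation invariant, so up to a translation we may assume the barycenter of $E^*$ lies at the origin (indeed Theorem \ref{regularity} already writes $\partial E^*$ as a graph over the sphere centered at its barycenter).

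The second step is to verify that $\|u\|_{C^1(\partial B_1)}<\delta_0$ provided $\gamma$ is small enough. From Lemma \ref{Linfinitybound} (and its proof) we already have $\|u\|_{L^\infty}\leq C\gamma^{1/(2n)}$, which tends to $0$ as $\gamma\to 0$. From Theorem \ref{regularity} we have the uniform estimate $\|u\|_{C^{1,(1-s)/2}(\partial B_1)}\leq C(n,s)$. Interpolating between these two bounds yields
\[
\|\nabla u\|_{L^\infty(\partial B_1)}\leq C\|u\|_{L^\infty}^{\theta}\|u\|_{C^{1,(1-s)/2}}^{1-\theta}
\leq C\gamma^{\theta/(2n)}
\]
for some $\theta=\theta(n,s)\in(0,1)$, so both $\|u\|_{L^\infty}$ and $\|\nabla u\|_{L^\infty}$ can be made smaller than $\delta_0/2$ by choosing $\bar\gamma_1\leq\gamma_1''$ sufficiently small, depending only on $n$ and $s$.

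The main thing to check is thus the $C^1$ smallness of $u$, which is not immediate from Lemma \ref{Linfinitybound} alone (that lemma only gives $\|\nabla u\|_{L^\infty}\leq 1/2$, not smallness) but is obtained cleanly via the interpolation argument above combined with the uniform H\"older regularity from Theorem \ref{regularity}. Once this is established, Fuglede's theorem applies directly to $E^*$, and the scale invariance of $D$ and of the Sobolev norms of $u$ on the unit sphere gives the claimed inequality for $E$.
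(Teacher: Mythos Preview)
Your proposal is correct and follows essentially the same route as the paper: verify the hypotheses of Fuglede's Theorem 2.1 and invoke it. The paper's entire argument is the one-line remark ``With Lemma \ref{Linfinitybound}, we can apply Theorem 2.1 in \cite{Fuglede},'' so you have in fact written out more detail than the paper itself.

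One small remark: your interpolation step to make $\|\nabla u\|_{L^\infty}$ arbitrarily small is valid but not needed. The version of Fuglede's inequality the paper relies on requires only $\|u\|_{L^\infty}$ small and $\|\nabla u\|_{L^\infty}$ bounded by a fixed dimensional constant (this is precisely why Lemma \ref{Linfinitybound} is stated with the specific thresholds $3/(20n)$ and $1/2$). So Lemma \ref{Linfinitybound} alone already gives the hypotheses, and the paper simply cites it without further work. Your interpolation argument is a correct alternative, just slightly more than necessary.
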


In order to get an upper bound on $D(E)$, we will show the following proposition:
\begin{proposition}\label{prop:1+snorm}
If $F\subset \R^n$ is an open set such that $|F|=m=|B_{r_0}|$ and
\[
\partial F=\{r_0(1+u(x))x:x\in \partial B_1\} 
\]
for some function $u\in C^{1,\frac{1}{2}(1-s)}(\partial B_1)$, then
\[
P_K(F)-P_K(B_{r_0})\leq C m^{\frac{n-s}{n}}\left([u]^2_{H^{\frac{1+s}{2}} (\partial B_1)}+\frac{1}{s(1-s)} \|u\|^2_{L^2(\partial B_1)}\right)
\]
for some constant $C$ depending only on $n$.
\end{proposition}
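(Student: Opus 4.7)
The plan is to compute $P_K(F)-P_K(B_{r_0})$ via a polarized quadratic-form expansion in $w=\chi_F-\chi_{B_{r_0}}$, pass to spherical coordinates, and use the pointwise bound $K(z)\le|z|^{-n-s}$ to reduce to estimates on $\partial B_1\times\partial B_1$ controlled by the Gagliardo seminorm $[u]_{H^{(1+s)/2}(\partial B_1)}$ and the $L^2$ norm of $u$.

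First, the change of variable $x\mapsto x/r_0$ together with the rescaled kernel $\tilde K(z)=r_0^{n+s}K(r_0z)$ (which still satisfies $\tilde K(z)\le|z|^{-n-s}$) gives $P_K(F)=r_0^{n-s}P_{\tilde K}(r_0^{-1}F)$ with the profile $u$ unchanged. Since $m=\omega_n r_0^n$, this absorbs the factor $m^{(n-s)/n}$ and reduces the problem to $r_0=1$. Next, using $|a-b|=(a-b)^2$ for $\{0,1\}$-valued functions, $P_K(E)$ is a positive-semidefinite quadratic form, and polarization with $w=\chi_F-\chi_{B_1}$ yields
\[
P_K(F)-P_K(B_1)=2\iint K(x-y)(\chi_{B_1}(x)-\chi_{B_1}(y))(w(x)-w(y))\,dx\,dy+\iint K(x-y)(w(x)-w(y))^2\,dx\,dy,
\]
both integrals finite because the integrands vanish on the diagonal.

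In spherical coordinates $x=r_1\omega_1$, $y=r_2\omega_2$, the function $w(r\omega)$ equals $\pm 1$ for $r$ between $1$ and $1+u(\omega)$ and vanishes otherwise, so the radial integration produces factors of order $|u(\omega_i)|$ in each variable. Lemma~\ref{Linfinitybound} ensures $r_i\in[1/2,3/2]$, so $|r_1\omega_1-r_2\omega_2|\gtrsim|\omega_1-\omega_2|$ and $K(r_1\omega_1-r_2\omega_2)\le C|\omega_1-\omega_2|^{-n-s}$. The linear-in-$w$ cross term, after Taylor-expanding its radial integrand around $r=1$, reduces at leading order to a constant times $\int_{\partial B_1}u\,d\omega$; the volume constraint $\int(1+u)^n\,d\omega=n\omega_n$ forces $\int u=-\tfrac{n-1}{2}\int u^2+O(\|u\|_\infty\|u\|_{L^2}^2)$, converting this piece into a $C\|u\|_{L^2}^2$ contribution. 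The higher-order radial correction is bounded by $\int_1^{1+u}r^{-s-1}r^n\,dr\sim|u|^2/(1-s)$, producing the factor $1/(1-s)$.

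The heart of the argument is the quadratic term. After radial integration it involves integrands roughly of the form $u(\omega_1)u(\omega_2)|\omega_1-\omega_2|^{-n-s}$ on $\partial B_1\times\partial B_1$. The main obstacle is that the naive bound $|u_1u_2|\le\tfrac12(u_1^2+u_2^2)$ produces a divergent integral, since $\int_{\partial B_1}|\omega_1-\omega_2|^{-n-s}\,d\omega_2=+\infty$. The way around this is the algebraic identity $2u_1u_2=u_1^2+u_2^2-(u_1-u_2)^2$: the $(u_1-u_2)^2$ piece integrates to exactly the Gagliardo seminorm $[u]_{H^{(1+s)/2}(\partial B_1)}^2$, while the ``diagonal'' $u_1^2+u_2^2$ piece must be handled by exploiting the cancellation built into the original quadratic integrand $(w_1-w_2)^2$, which vanishes on the codimension-$n$ singular set $r_1\omega_1=r_2\omega_2$. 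Only the far-field tail $\int_{|\omega_1-\omega_2|>\delta}|\omega_1-\omega_2|^{-n-s}\,d\omega_2\sim 1/s$ survives for the $L^2$ contribution, and combined with the radial $1/(1-s)$ factor above gives the $1/(s(1-s))$ coefficient in the statement. Higher-order Taylor terms are of relative size $\|u\|_\infty\le 3/(20n)$ (from Lemma~\ref{Linfinitybound}) and do not affect the leading estimate.
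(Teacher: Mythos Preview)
Your polarization approach differs from the paper's, and the sketch contains a genuine gap in the treatment of the cross term.

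Write the cross term as $4\int_{\partial B_1}\int_1^{1+u(\omega)}G(r)r^{n-1}\,dr\,d\omega$ where
\[
G(r)=\int_{\R^n}K(r\omega-y)\bigl(\chi_{B_1}(r\omega)-\chi_{B_1}(y)\bigr)\,dy.
\]
You claim this integrand Taylor-expands around $r=1$ so that the cross term is $\beta\int u+O(\|u\|_{L^2}^2)$. But $G$ is \emph{not} continuous at $r=1$: for $r=1+\epsilon$ with $\epsilon>0$ one has $G(r)=-\int_{B_1}K(r\omega-y)\,dy$, and with $K(z)\le|z|^{-n-s}$ allowed to attain this bound, a flattening computation gives $G(1+\epsilon)\sim -c\,\epsilon^{-s}$ (and $G(1-\epsilon)\sim +c\,\epsilon^{-s}$). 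Hence $\int_1^{1+u}G(r)r^{n-1}\,dr\sim -c\,|u|^{1-s}/(1-s)$ in both sign cases, and the cross term carries a leading contribution of order $\int_{\partial B_1}|u|^{1-s}$, not $\int u$ or $\|u\|_{L^2}^2$. This $|u|^{1-s}$ piece must cancel against an equal and opposite singular contribution hidden in the quadratic term, but your analysis of the quadratic term (``roughly $u_1u_2|\omega_1-\omega_2|^{-n-s}$ after radial integration'') neither identifies this piece nor carries out the cancellation; indeed the quadratic integrand $(w_x-w_y)^2$ is not supported on shell$\times$shell, so radial integration does not simply produce a factor $|u(\omega_1)||u(\omega_2)|$.

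The paper sidesteps this difficulty entirely by a different decomposition. Starting from $P_K(F)=2\int_F\int_{F^c}K$ in spherical coordinates and applying the identity
\[
\int_0^b\!\!\int_a^\infty+\int_0^a\!\!\int_b^\infty=\int_a^b\!\!\int_a^b+\int_0^a\!\!\int_a^\infty+\int_0^b\!\!\int_b^\infty
\]
with $a=r_0(1+u(x))$, $b=r_0(1+u(y))$, one obtains $P_K(F)=I_1+I_2$ where $I_1$ is directly bounded by $Cr_0^{n-s}[u]^2_{H^{(1+s)/2}}$ via $K(z)\le|z|^{-n-s}$, and $I_2=\int_{\partial B_1}\psi(1+u)$ with $\psi(\eta)=P_K(B_{r_0\eta})/P(B_1)$. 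The point is that $\psi$, being built from $P_K$ of balls, is genuinely $C^2$ in $\eta$ (this is the content of Lemma~\ref{lem:psi}), so $\psi(1+u)-\psi(1)-\beta u=O(u^2/(s(1-s)))$ and the volume constraint converts $\int\beta u$ into $O(\|u\|_{L^2}^2)$. The singular $|u|^{1-s}$ pieces that plague the polarization never appear, because the decomposition already packages the cancellation into the smooth object $\psi$.
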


Before turning to the proof of this proposition, we will use Propositions  \ref{Fuglede}  and  \ref{prop:1+snorm} to prove Theorem \ref{thm:ball}:
\begin{proof}[Proof of Theorem \ref{thm:ball}]
When $\gamma\leq  \gamma_1'$, Theorem \ref{regularity} implies $\partial E\in C^{1,\frac{1}{2}(1-s)}$ and
\[
\partial E=\{r_0(1+u(x))x:x\in \partial B_1\}.
\] 
Furthermore, we have
\[
\J_{\alpha}(E)\leq\J_{\alpha}(B_{r_0}),
\]
and thus by Proposition \ref{prop:1+snorm},
\begin{align*}
D(E)=\frac{P(E)-P(B_{r_0})}{P(B_{r_0})} & \leq \alpha s(1-s)\frac{P_K(E)-P_K(B_{r_0})}{r_0^{n-1}P(B_1)}\\
&\leq C\alpha m^{\frac{1-s}{n}}(s(1-s)[u]^2_{H^{\frac{1+s}{2}}}+\|u\|^2_{L^2(\partial B_1)}).
\end{align*}
On the other hand,   Proposition \ref{Fuglede} gives the lower bound:
\[
D(E)\geq C(\|u\|_{L^2(\partial B_{1})}^2+[u]_{H^1(\partial B_{1})}^2)
\]
as long as $\gamma<\bar \gamma_1$.
Thus, we obtain
\[
C(\|u\|_{L^2(\partial B_{1})}^2+[u]_{H^1(\partial B_{1})}^2)\leq D(E)\leq C\alpha m^{\frac{1-s}{n}}(s(1-s)[u]^2_{H^{\frac{1+s}{2}}(\partial B_1)}+\|u\|^2_{L^2(\partial B_1)}).
\]
When $\gamma=\alpha m^{\frac{1-s}{n}}\leq \gamma_1$
with $\gamma_1$ small enough (but depending only on $n$ and $s$), this inequality, together with Sobolev embedding implies $u=0$ and so $E=B_{r_0}$.
\end{proof}

\begin{proof}[Proof of Proposition \ref{prop:1+snorm}]
The proof follows similar computations as in \cite{figalli}, although the fact that $P_K\neq P_s$ complicates things a bit.
First, we write $P_K(F)$ as
\begin{align*}
P_K(F)&=2\int_{F}{\int_{F^c} {K(x-y)dy}dx}\\
&=2 \int_{\partial B_1}{d\mathcal{H}^{n-1}(x)\int_{\partial B_1}{d\mathcal{H}^{n-1}(y)\int_{0}^{r_0(1+u(x))}{dr\int_{r_0(1+u(y))}^{\infty}{ f_{|x-y|}(r,\rho)d\rho}}}}\\
\end{align*}
with
\[
f_{|x-y|}(r,\rho)=r^{n-1}\rho^{n-1}K(\sqrt{(r-\rho)^2+r\rho|x-y|^2}).
\]
By using the identity  
\[
\int_0^b{\int_a^{\infty}}+\int_0^a{\int_b^{\infty}}=\int_a^b{\int_a^{b}}+\int_0^a{\int_a^{\infty}}+\int_0^b{\int_b^{\infty}},
\]
and the symmetric property of $f$, we have 
\begin{align}
P_K(F)&= \int_{\partial B_1}{d\mathcal{H}^{n-1}(x)\int_{\partial B_1}{d\mathcal{H}^{n-1}(y)\int_{r_0(1+u(x))}^{r_0(1+u(y))}{dr\int_{r_0(1+u(x))}^{r_0(1+u(y))}{ f_{|x-y|}(r,\rho)d\rho}}}}\nonumber \\
&\quad +2\int_{\partial B_1}{d\mathcal{H}^{n-1}(x)\int_{\partial B_1}{d\mathcal{H}^{n-1}(y)\int_{0}^{r_0(1+u(x))}{dr\int_{r_0(1+u(x))}^{\infty}{ f_{|x-y|}(r,\rho)d\rho}}}}\nonumber \\
&=I_1+I_2.\label{eq:I1I2}
\end{align}
For the first integral, we use the fact that for every $x,y\in \partial B_1$ we have
\[
f_{|x-y|}(r,\rho)\leq \frac{r^{n-1}\rho^{n-1}}{(r\rho|x-y|^2)^{\frac{n+s}{2}}}= \frac{r^{\frac{n-s}{2}-1}\rho^{\frac{n-s}{2}-1}}{|x-y|^{n+s}}
\]
since $0\leq K(z)\leq \frac{1}{|z|^{n+s}}$ and $K(|z|)$ is non-increasing. This implies
\begin{align*}
&\int_{r_0(1+u(x))}^{r_0(1+u(y))}{dr\int_{r_0(1+u(x))}^{r_0(1+u(y))}{ f_{|x-y|}(r,\rho)d\rho}}\\
&\qquad \leq \frac{1}{|x-y|^{n+s}}\int_{r_0(1+u(x))}^{r_0(1+u(y))}{r^{\frac{n-s}{2}-1}dr}\int_{r_0(1+u(x))}^{r_0(1+u(y))}{\rho^{\frac{n-s}{2}-1}d\rho}\\
&\qquad \leq \frac{4}{(n-s)^2}r_0^{n-s}\frac{((1+u(y))^{\frac{n+s}{2}}-(1+u(x))^{\frac{n+s}{2}})^2}{|x-y|^{n+s}}\\
&\qquad \leq Cr_0^{n-s}\frac{|u(x)-u(y)|^2}{|x-y|^{n+s}},
\end{align*}
and therefore,
\begin{equation}\label{eq:I1}
I_1\leq Cr_0^{n-s}\int_{\partial B_1}{\int_{\partial B_1}{\frac{|u(x)-u(y)|^2}{|x-y|^{n+s}}d\mathcal{H}^{n-1}(y)}d\mathcal{H}^{n-1}(x)}
=Cr_0^{n-s}[u]^2_{H^{\frac{1+s}{2}}(\partial B_1)}.
\end{equation}

For the second integral, fix $x\in \partial B_1$, and by the symmetric properties of $K$, the integral
\[
\int_{\partial B_1}{d\mathcal{H}^{n-1}(y)\int_{0}^{r_0(1+u(x))}{dr\int_{r_0(1+u(x))}^{\infty}{ f_{|x-y|}(r,\rho)d\rho}}}
\]
is a function of $u(x)$ only, so for every $\eta>0$, we define
\begin{equation}\label{psifunction}
\psi(\eta):=\int_{\partial B_1}{d\mathcal{H}^{n-1}(y)}{\int_{0}^{r_0\eta}{dr\int_{r_0\eta}^{\infty}{ f_{|x-y|}(r,\rho)d\rho}}}
\end{equation}
and write
\[
I_2=\int_{\partial B_1}{\psi(1+u(x))d\mathcal{H}^{n-1}(x)}.
\]
Note that \eqref{eq:I1I2} implies
\begin{equation}\label{P_J-psi}
P_K(B_{r_0\eta})=\int_{\partial B_1}{\psi(\eta)d\mathcal{H}^{n-1}(x)},\qquad \psi(\eta)=\frac{P_K(B_{r_0\eta})}{P(B_1)}.
\end{equation}

Together with \eqref{eq:I1}, \eqref{eq:I1I2} now implies
\[
P_K(F)=I_1+I_2\leq Cr_0^{n-s}[u]^2_{H^{\frac{1+s}{2}}(\partial B_1)}+\int_{\partial B_1}{\psi(1+u(x))d\mathcal{H}^{n-1}(x)}
\]
and using \eqref{psifunction}, we deduce
\begin{equation}\label{eq:PKFPKB}
P_K(F)-P_K(B_{r_0})\leq Cr_0^{n-s}[u]^2_{H^{\frac{1+s}{2}}(\partial B_1)}+\int_{\partial B_1}{(\psi(1+u(x))-\psi(1))d\mathcal{H}^{n-1}(x)}.
\end{equation}
To complete the proof, we use the following technical lemma, which is proved in Appendix \ref{sec:psi}:
\begin{lemma}\label{lem:psi}
There exists $\beta \in \mathbb{R}$ with $|\beta|\leq \frac{C}{s(1-s)}r_0^{n-s}$ such that
\[
|\psi(1+\lambda)-\psi(1)-\beta \lambda|\leq \frac{C}{s(1-s)}r_0^{n-s}\lambda^2\]
for all $|\lambda|<1/2$.
\end{lemma}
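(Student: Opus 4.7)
The plan is to reduce Lemma \ref{lem:psi} to a second-order Taylor expansion for the map $R\mapsto P_K(B_R)$ at $R=r_0$. By the identity \eqref{P_J-psi}, $\psi(\eta)$ is proportional to $P_K(B_{r_0\eta})$, so the lemma follows once we establish $\psi\in C^{1,1}$ on $[1/2,3/2]$ with $|\psi'(1)|\leq Cr_0^{n-s}/(s(1-s))$ and the same bound on the second derivative.

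First I would rewrite the nonlocal perimeter in a scaling-friendly form. Starting from the definition of $P_K$, the successive substitutions $x=Rx'$, $y=Ry'$ and then $w=x'-y'$ yield
\[
P_K(B_R) = R^n \int_{\R^n} K(u)\,\mu(u/R)\,du,\qquad \mu(z):=|B_1\Delta(B_1+z)|,
\]
where $\mu$ is radial, supported in $\{|z|\leq 2\}$, satisfies $\mu(z)\leq \min\{|z|P(B_1),\,2|B_1|\}$, and is Lipschitz with $|\nabla\mu|\leq P(B_1)$. Thus $\psi(\eta)$ is a constant multiple (depending only on $n$) of $R^n\int K(u)\mu(u/R)\,du$ evaluated at $R=r_0\eta$.

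Next I would differentiate under the integral sign. Writing $\mu(z)=\bar\mu(|z|)$, a direct computation gives
\[
\tfrac{d}{dR}\bigl[R^n\bar\mu(|u|/R)\bigr]=nR^{n-1}\bar\mu(|u|/R)-R^{n-2}|u|\bar\mu'(|u|/R),
\]
and splitting the resulting integral at $|u|=R$ and using $K(u)\leq|u|^{-n-s}$ together with the bounds on $\mu$ and $\bar\mu'$ gives $|\psi'(\eta)|\leq Cr_0R^{n-1-s}/(s(1-s))$, hence $|\beta|:=|\psi'(1)|\leq Cr_0^{n-s}/(s(1-s))$. The same scheme applied to the second derivative produces three terms involving $\bar\mu$, $\bar\mu'$ and $\bar\mu''$ weighted by $R^{n-2}$, $R^{n-3}|u|$ and $R^{n-4}|u|^2$ respectively; splitting again at $|u|=R$ bounds each by $Cr_0^{n-s}/(s(1-s))$ uniformly on $[1/2,3/2]$. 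Taylor's theorem at $\eta=1$ then yields the stated estimate with $\beta=\psi'(1)$.

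The main obstacle is the rigorous handling of $\bar\mu''$: since $\bar\mu(r)=P(B_1)r+O(r^2)$ near $r=0$, the function $\bar\mu'$ jumps from $0$ to $P(B_1)$ there and $\bar\mu''$ contains a Dirac mass at the origin. The key observation that rescues the argument is that in the second derivative of $R^n\bar\mu(|u|/R)$, the coefficient of $\bar\mu''(|u|/R)$ is exactly $|u|^2$, which vanishes at $|u|=0$ and annihilates the Dirac contribution; outside the origin $\bar\mu\in C^\infty((0,2))$ with bounded second derivative, so the estimate goes through. A cleaner way to sidestep distributions entirely is to estimate the symmetric second difference $\psi(1+\lambda)+\psi(1-\lambda)-2\psi(1)$ directly, using the pointwise bound on $R\mapsto R^n\bar\mu(|u|/R)$ for $R$ in a compact range away from zero before integrating against $K$.
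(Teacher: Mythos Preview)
Your approach is essentially the same as the paper's: both write $P_K(B_R)=R^n\int K(u)\,\mu(u/R)\,du$ via the overlap function of balls, set $\beta=\psi'(1)$, and bound the first and second $R$-derivatives term by term by splitting the integral and using $K(u)\le|u|^{-n-s}$. The paper carries this out by computing $\bar\mu'$ and $\bar\mu''$ explicitly rather than appealing to qualitative properties, but the structure is identical.

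Two small points. First, the Dirac-mass worry is a red herring: in the second derivative the argument of $\bar\mu''$ is $|u|/R>0$ for every $u\neq0$, and the integrand vanishes identically at $u=0$, so you never leave the smooth region and no distributional interpretation is needed. Second, and more substantively, your claim that $\bar\mu''$ is bounded on $(0,2)$ is false when $n=2$: from the explicit formula $\bar\mu''(t)=-c\,t(1-t^2/4)^{(n-3)/2}$ one sees a $(2-t)^{-1/2}$ blow-up at the endpoint $t=2$. This does not break the argument, since the singularity is integrable and the prefactor $|u|^2K(u)$ is bounded near $|u|=2R$, but you need to treat this endpoint separately (as the paper does) rather than asserting a uniform bound on $\bar\mu''$.
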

With $\beta$ as in Lemma \ref{lem:psi}, we get
\begin{equation}\label{claim}
\begin{aligned}
\int_{\partial B_1}{|\psi(1+u(x))-\psi(1)-\beta u(x)|d\mathcal{H}^{n-1}(x)}\leq \frac{C}{s(1-s)}r_0^{n-s}\int_{\partial B_1}{u^2(x)d\mathcal{H}^{n-1}(x)}.
\end{aligned}
\end{equation}
Furthermore,  the volume constraint $\int_{\partial B_1}{((1+u(x))^n-1)d\mathcal{H}^{n-1}(x)}=0$ implies 
\[
\left|\int_{\partial B_1}{\beta u(x)d\mathcal{H}^{n-1}(x)}\right| \leq C|\beta|\int_{\partial B_1}{ u^2(x)d\mathcal{H}^{n-1}(x)}.
\]
We thus have
\begin{equation*}
\begin{aligned}
\int_{\partial B_1}{(\psi(1+u(x))-\psi(1))d\mathcal{H}^{n-1}(x)}&\leq \int_{\partial B_1}{|\psi(1+u(x))-\psi(1)-\beta u(x)|d\mathcal{H}^{n-1}(x)}\\
&\qquad +\left|\int_{\partial B_1}{\beta u(x)d\mathcal{H}^{n-1}(x)}\right|\\
&\leq \frac{C}{s(1-s)}r_0^{n-s}\int_{\partial B_1}{u^2(x)d\mathcal{H}^{n-1}(x)}.
\end{aligned}
\end{equation*}
The result now follows from \eqref{eq:PKFPKB}.
%and thus it implies
%\[
%P_K(F)-P_K(B_{r_0})\leq Cr_0^{n-s}([u]^2_{H^{\frac{1+s}{2}}(\partial B_1)}+\|u\|^2_{L^2(\partial B_1)}).
%\]
\end{proof}

\section{The ball is not a minimizers for large $\alpha m^{\frac{1-s}{n}}$} \label{sec:nonball}
\subsection{Proof of Theorem \ref{thm:nonball}}
Theorem \ref{thm:nonball} states in particular that the ball is not a global minimizer for large $\alpha m^{\frac{1-s}{n}}$, but also gives an upper bound on the width the minimizer. 
To prove the result, we assume, by contradiction that there exists $x_0\in\mathbb{R}^n$ such that $B_{\rho/2}(x_0)\subset B_{\rho}(x_0)\subset E$ (and so $|B_{\rho}(x_0)\setminus E|=0$) for some $\rho\geq 4\rho_0$.
We then construct a competitor $F$ by removing the ball $B_{\rho/2}(x_0)$ from $E$ and placing it "at infinity".
More precisely, given $y_0$ such that $|x_0-y_0|=R\gg1$, and $B_{\rho/2}(y_0)\cap E=\emptyset$, 
we consider the set
\[
F_R=\big( E\setminus B_{\rho/2}(x_0)\big)\cup B_{\rho/2}(y_0)
\]
Using \eqref{eq:PKAB}, we get
$$
P_K(F_R) = P_K(E\setminus B_{\rho/2}(x_0)) + P_K(B_{\rho/2}(y_0)) - 4 \int_{\R^n} \int_{|R^n} K(x-y) \chi_{E\setminus B_{\rho/2}(x_0)} (x) \chi_{B_{\rho/2}(y_0)}(y)\, dx\,dy
$$
and 
$$
P_K(E) = P_K(E\setminus B_{\rho/2}(x_0)) + P_K(B_{\rho/2}(x_0)) - 4 \int_{\R^n} \int_{|R^n} K(x-y) \chi_{E\setminus B_{\rho/2}(x_0)} (x) \chi_{B_{\rho/2}(x_0)}(y)\, dx\,dy
$$
and therefore (using the fact that $K(x)\to0 $ as $|x|\to \infty$):
\begin{align*}
P_K(F_R) - P_K(E) & =  4 \int_{\R^n} \int_{|R^n} K(x-y) \chi_{E\setminus B_{\rho/2}(x_0)} (x) \chi_{B_{\rho/2}(x_0)}(y)\, dx\,dy \\
& \quad - 4 \int_{\R^n} \int_{|R^n} K(x-y) \chi_{E\setminus B_{\rho/2}(x_0)} (x) \chi_{B_{\rho/2}(y_0)}(y)\, dx\,dy \\
& \geq   4 \int_{\R^n} \int_{|R^n} K(x-y) \chi_{E\setminus B_{\rho/2}(x_0)} (x) \chi_{B_{\rho/2}(x_0)}(y)\, dx\,dy   + o(1)\qquad  \mbox{ as } R\to \infty.
\end{align*}
Since $|F_R|=|E|=m$, we must have $\J_{\alpha}(F_R)\geq \J_{\alpha}(E)$ and so
\begin{align*}
C\rho^{n-1} \geq 2 P(B_{\rho/2})&\geq \alpha s(1-s)   (P_K(F_R)-P_K(E))\\
&\geq 4  \alpha s(1-s) \int_{B_{\rho}(x)\setminus B_{\frac{3}{4}\rho}(x)}{\int_{B_{\frac{1}{4}\rho}(x)}{K(\xi-\eta)d\eta}d\xi} + o(1)\\
&\geq \alpha s(1-s) C\rho^{2n}\frac{1}{\rho^{n+s}}+ o(1)
\end{align*}
where we used the lower bound \eqref{eq:assK2} and the fact that $\rho\geq 4\rho_0$.
Passing to the limit $R\to \infty$, this implies that
\[
\rho\leq (C\alpha s (1-s))^{\frac{-1}{1-s}}
\]
and the result follows.
\medskip

In particular, if the ball of volume $m$ is a minimizer, we must have
\[
r_0=\left(\frac{m}{|B_1|}\right)^{1/n}\leq\max \{ 4 \rho_0,(C\alpha s (1-s))^{\frac{-1}{1-s}} \} 
%> \tilde{\gamma}_3\alpha^{\frac{-1}{1-s}},
\]
which is equivalent to
$$
\alpha m^{\frac{1-s}{n}} \leq \max \{\gamma_2, c \alpha \rho_0^{1-s}\} 
$$
for some $\gamma_2$ and $c$ depending on $s$ and $n$.
\medskip

\subsection{Proof of Theorem \ref{thm:beta1beta2}}
%Finally, we further characterize the role of the ball.
We note that $E$ is a minimizer $\F_{s,\alpha}$ with the constraint $|E|=m$, if and only if $E_m=\frac{1}{m^{1/n}}E$ is a minimizer of energy $\F_{s,\beta}$ under the constraint $|F|=|B_1|$ with 
\begin{equation}\label{eq:beta}
\beta:=\alpha (m/|B_1|) ^{\frac{1-s}{n}}
\end{equation}
The first part of the theorem relies on the following lemma:
\begin{lemma}\label{globalintervalbeta}
If $B_1$ is the global minimizer of $\F_{s,\beta}(F)$ with constraint $|F|=|B_1|$ for $\beta=\beta_1$, then it is the global minimizer of $\F_{s,\beta}(F)$ for all $\beta<\beta_1$.
\end{lemma}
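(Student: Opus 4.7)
The plan is to exploit the fact that, for any fixed $E$ with $|E|=|B_1|$, the map
$$\beta \;\longmapsto\; \F_{s,\beta}(E)-\F_{s,\beta}(B_1) \;=\; \bigl[P(E)-P(B_1)\bigr] - \beta\, s(1-s)\bigl[P_s(E)-P_s(B_1)\bigr]$$
is affine in $\beta$. Its slope is $-s(1-s)\bigl[P_s(E)-P_s(B_1)\bigr]$, so the whole question reduces to determining the sign of $P_s(E)-P_s(B_1)$ for a hypothetical competitor $E$ that beats $B_1$.

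I would argue by contradiction. Suppose that $B_1$ is a global minimizer of $\F_{s,\beta_1}$ under the constraint $|F|=|B_1|$, yet $B_1$ fails to be a global minimizer of $\F_{s,\beta}$ for some $\beta<\beta_1$. Then there exists a set $E$ with $|E|=|B_1|$ such that $\F_{s,\beta}(E)<\F_{s,\beta}(B_1)$, which rewrites as
$$P(E)-P(B_1) \;<\; \beta\, s(1-s)\bigl(P_s(E)-P_s(B_1)\bigr).$$

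The next step is to combine this strict inequality with the classical isoperimetric inequality \eqref{eq:iso}, which gives $P(E)-P(B_1)\geq 0$. Hence the right-hand side must be strictly positive, and since $\beta>0$, we deduce the key sign information $P_s(E)>P_s(B_1)$. Evaluating the same affine function at $\beta_1>\beta$ then yields
\begin{align*}
\F_{s,\beta_1}(E)-\F_{s,\beta_1}(B_1)
&= \bigl[P(E)-P(B_1)\bigr] - \beta_1\, s(1-s)\bigl(P_s(E)-P_s(B_1)\bigr)\\
&< (\beta-\beta_1)\, s(1-s)\bigl(P_s(E)-P_s(B_1)\bigr) < 0,
\end{align*}
where the first inequality uses the displayed strict bound and the second uses $\beta<\beta_1$ together with $P_s(E)>P_s(B_1)$. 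This contradicts the minimality of $B_1$ at $\beta=\beta_1$.

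There is no real obstacle here: the only external ingredient is the classical isoperimetric inequality $P(E)\geq P(B_1)$, and the monotonicity argument is immediate once the affine-in-$\beta$ structure of $\F_{s,\beta}$ is noted. I expect the author's proof to be essentially identical and to occupy only a handful of lines.
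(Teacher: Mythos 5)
Your proof is correct, but it takes a slightly different route from the paper's. Both proofs exploit the fact that $\beta\mapsto\F_{s,\beta}(E)-\F_{s,\beta}(B_1)$ is affine, so minimality at two values of $\beta$ pins down the sign of the slope $-s(1-s)[P_s(E)-P_s(B_1)]$. Where you diverge is in which isoperimetric inequality does the work. You argue by contradiction: the strict inequality $\F_{s,\beta}(E)<\F_{s,\beta}(B_1)$ plus the \emph{classical} isoperimetric inequality $P(E)\geq P(B_1)$ forces $P_s(E)>P_s(B_1)$, and then evaluating the affine function at $\beta_1$ produces the contradiction. The paper instead argues directly: from $\F_{s,\beta}(E)\leq\F_{s,\beta}(B_1)$ and minimality of $B_1$ at $\beta_1$, subtracting the two inequalities gives $(\beta_1-\beta)s(1-s)P_s(E)\leq(\beta_1-\beta)s(1-s)P_s(B_1)$, i.e.\ $P_s(E)\leq P_s(B_1)$, and then the \emph{fractional} isoperimetric inequality forces $E=B_1$. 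So the paper's version yields the slightly stronger conclusion that $B_1$ is the unique minimizer for $\beta<\beta_1$, at the cost of invoking the fractional isoperimetric inequality (with its rigidity), whereas your version uses only the classical one and establishes that $B_1$ remains a global minimizer --- which is all the lemma actually claims and all that is needed to conclude that the set of $\beta$ for which $B_1$ is a global minimizer is an interval $[0,\beta_1^*]$.
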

\begin{proof}
Let $E$ be such that $|E|=|B_1|$ and 
\[
P(E)-\beta s(1-s)P_s(E)\leq P(B_1)-\beta s(1-s)P_s(B_1)
\]
for some $\beta<\beta_1$. This implies
\begin{equation}\label{beta0inequality}
P(E)-\beta_1 s(1-s) P_s(E)+(\beta_1-\beta)s(1-s)P_s(E)\leq P(B_1)-\beta_1 s(1-s)P_s(B_1)+(\beta_1-\beta)s(1-s)P_s(B_1).
\end{equation}

Since $B_1$ is the global minimizer of $\F_{\beta_1}$, we have
\[
P(E)-\beta_1 s(1-s)P_s(E)\geq P(B_1)-\beta_1 s(1-s) P_s(B_1),
\]
so \eqref{beta0inequality} (and the fact that $\beta_1-\beta>0$) yields
\[
P_s(E)\leq P_s(B_1).
\]
The isoperimetric inequality for the fractional perimeter thus implies that $E=B_1$ and the lemma follows.
\end{proof}

Lemma \ref{globalintervalbeta}, together with Theorems \ref{thm:ball} and \ref{thm:nonball} imply that there exists $\beta_1^*\in(0,\infty)$ such that
$$\{\beta \, ;\, B_1  \mbox{ is a global minimizer of }\F_{s,\beta}\} = [0,\beta_1^*]$$ 
and thus prove the first part of Theorem \ref{thm:beta1beta2}.
\medskip
\medskip

The proof of the second part of the Theorem \ref{thm:beta1beta2} (local minimizers) relies on an analysis of the first and second variations of the energy $\F_{s,\beta}$.
Following \cite{figalli}, given a vector field $X\in C_c^{\infty}(\R^n;\R^n)$ we denote by $\{\Phi_t\}_{t\in \R}$ the flow induced by $X$ (solution of $\partial _t\Phi_t(x) = X(\Phi_t(x))$, $\Phi_0(x)=x$) and we then define $E_t := \Phi_t(E)$.
We say that
$X$ induces a volume-preserving flow on $E$ if $|E_t| =|E|$ for small $t$. This implies in particular
\begin{equation}\label{eq:vol}
\frac{d}{dt}|E_t| _{|_{t=0}} = \int_{\partial E} \zeta\, d\H^{n-1} = 0,  \qquad \frac{d^2}{dt^2}|E_t| _{|_{t=0}} = \int_{\partial E} (\div X)\, \zeta\, d\H^{n-1} = 0
\end{equation}
where we denoted 
$$ \zeta  := X\cdot \nu$$
with $\nu$ normal unit vector to $\partial E$.
With these notations, we have the following 
 formula for the first and second variation of the perimeters along a volume-preserving flow  (see for example Section 6 in \cite{figalli}):
\begin{align*}
 \delta P(E)[X]& =  \int_{\partial E } H_{\partial E}\,  \zeta\, d\H^{n-1}\\
 \delta^2P(E)[X] & =\int_{\partial E} |\nabla_{\tau}\zeta|^2 - c_{\partial E}^2\zeta^2d\mathcal{H}^{n-1}  \\
& \qquad+ \int_{\partial E} H_{\partial E} \left( (\div X) \zeta - \div_\tau (\zeta X_\tau)\right) d\H^{n-1}
\end{align*}
We used here the following classical notations: $H_{\partial E}$ denotes the scalar mean-curvature of $\partial E$, $c^2_{\partial E}$ denotes the sum of the squares of the principal curvatures of $\partial E$, $X_\tau=X-\zeta \nu$ is the tangential projection of $X$ along $\partial E$ and $\nabla_\tau$, $\div_\tau$ denotes the tangential gradient and divergence operators.

When $E=B_1$, all curvatures are constant and using \eqref{eq:vol}, we deduce:
\begin{align}
 \delta P(B_1)[X]& =  0\label{eq:var1P} \\
 \delta^2P(B_1)[X] & =\int_{\partial B_1}{|\nabla_{\tau}\zeta|^2d\mathcal{H}^{n-1}}-\int_{\partial B_1}{c_{\partial B_1}^2\zeta^2d\mathcal{H}^{n-1}}\label{eq:var2P} 
\end{align}

Similar formula are derived in \cite{figalli} for nonlocal perimeters. In particular we can write:
\begin{align*}
\delta P_s(E)[X] & = \int_{\partial E } H_{s,\partial E}\,  \zeta\, d\H^{n-1}\\
\delta^2 P_s(E)[X] & = \int\!\!\!\int_{\partial E\times\partial E} \frac{|\zeta(x)-\zeta(y)|^2}{|x-y|}^{n+s}\, d\H^{n-1}_x d\H^{n-1}_y- \int_{\partial E} c_{s,\partial E}^2\zeta^2d\mathcal{H}^{n-1}  \\
& \qquad+ \int_{\partial E} H_{s,\partial E} \left( (\div X) \zeta - \div_\tau (\zeta X_\tau)\right) d\H^{n-1}
\end{align*}
where $H_{s,\partial E}$ denotes the $s-$curvature defined by
$$ H_{s,\partial E}(x): = \mathrm{p.v.} \int_{\R^n} \frac{\chi_{E^c}(y)-\chi_E(y)}{|x-y|^{n+s}} dy$$
and we set
$$ c_{s,\partial E}^2(x) := \int_{\partial E} \frac{|\nu(x)-\nu(y)|^2} {|x-y|^{n+s}} d\H^{n-1}_y.$$

When $E=B_1$, using \eqref{eq:vol}, we get:
\begin{align}
& \delta P_s(B_1)[X]=0 \label{eq:var1Ps}  \\
& \delta^2 P_s(B_1)[X] = \int_{\partial B_1}{\int_{\partial B_1}{\frac{|\zeta(x)-\zeta(y)|^2}{|x-y|^{n+s}}d\mathcal{H}^{n-1}(x)}d\mathcal{H}^{n-1}(y)}-\int_{\partial B_1}{c_{s,\partial B_1}^2\zeta^2d\mathcal{H}^{n-1}}\label{eq:var2Ps} 
\end{align}

Using  \eqref{eq:var1P}  and \eqref{eq:var1Ps},   we see that the unit ball $B_1$ is always a 
critical point of the energy $\F_{s,\beta}$ for volume preserving variation, since
\[
\delta \F_{s,\beta}(B_1)[X]=\delta P(B_1)[X]-\beta s(1-s)\delta P_s(B_1)[X].
\]
Furthermore, the second variation formula yield:
 \begin{equation}\label{secondvariation}
 \begin{aligned}
\delta^2 \F _{\beta}(B_1)[X]&=\int_{\partial B_1}{|\nabla_{\tau}\zeta|^2d\mathcal{H}^{n-1}}-c_{\partial B_1}^2 \int_{\partial B_1}{\zeta^2d\mathcal{H}^{n-1}}\\
&-\beta s(1-s)\left(\int\!\!\!\int_{\partial B_1\times \partial B_1} \frac{|\zeta(x)-\zeta(y)|^2}{|x-y|^{n+s}} d\H^{n-1}_xd\H^{n-1}_y -c_{s,\partial B_1}^2\int_{\partial B_1}{\zeta^2d\mathcal{H}^{n-1}}\right)
\end{aligned}
\end{equation}

We will say that $B_1$ is a volume-constrained stable set for $\F_{s,\beta}$ if $\delta^2 \F_{s,\beta}(B_1)[X]\geq 0$ for every $X$ inducing a volume-preserving flow on $B_1$.
We then have the following result:
\begin{proposition}\label{prop:stable}
The unit ball $B_1$ is a volume-constrained stable set for $\F_{s,\beta}$ if and only if $\beta \in [0,\beta_2^*]$ where
\begin{equation}\label{eq:beta2}
\beta_2^*=\frac{n+1}{s(n+s)}\frac{P(B_1)}{s(1-s)P_s(B_1)}.
\end{equation}
Moreover, as $s\to 1$, $\beta_2^*\to \frac{1}{2n\omega_n}$ and as $s\to 0$, $\beta_2\to \infty$.
\end{proposition}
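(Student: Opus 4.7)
The plan is to diagonalize the quadratic form \eqref{secondvariation} by expanding the normal velocity in spherical harmonics on $\partial B_1=S^{n-1}$. Write $\zeta=\sum_{k\geq 0}\sum_j a_{k,j}Y_{k,j}$ where $\{Y_{k,j}\}_j$ is an $L^2$-orthonormal basis of the degree-$k$ spherical harmonics; the first-order volume preservation from \eqref{eq:vol} kills the constant mode, so $a_{0,0}=0$. Using that the Laplace--Beltrami operator on $S^{n-1}$ has eigenvalue $k(k+n-2)$ on $Y_{k,j}$, and that all principal curvatures of $\partial B_1$ equal $1$ so $c^2_{\partial B_1}=n-1$, the local contribution in \eqref{secondvariation} diagonalizes as $\sum_{k\geq 1}\sum_j[k(k+n-2)-(n-1)]a_{k,j}^2$.

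For the nonlocal contribution, the key point is that the bilinear form $(\zeta,\eta)\mapsto\int\!\!\int_{\partial B_1\times\partial B_1}\tfrac{(\zeta(x)-\zeta(y))(\eta(x)-\eta(y))}{|x-y|^{n+s}}\,d\H^{n-1}_xd\H^{n-1}_y$ and the function $x\mapsto c^2_{s,\partial B_1}(x)$ are both invariant under the action of $O(n)$; hence each degree-$k$ subspace is an eigenspace of the associated self-adjoint operator with some eigenvalue $\mu_{k,s}$ depending only on $k$, and $c^2_{s,\partial B_1}$ is in fact a constant (using $|\nu(x)-\nu(y)|=|x-y|$ on the sphere). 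Consequently the nonlocal contribution decomposes as $\sum_{k\geq 1}\sum_j(\mu_{k,s}-c^2_{s,\partial B_1})a_{k,j}^2$. Translation invariance of $P$ and $P_s$ forces both brackets to vanish for $k=1$: the constant field $X\equiv e$ produces $\zeta=e\cdot x$, a degree-$1$ spherical harmonic, and leaves both perimeters unchanged, so $1\cdot(1+n-2)-(n-1)=0$ and $\mu_{1,s}=c^2_{s,\partial B_1}$. Thus stability of $B_1$ for $\F_{s,\beta}$ is equivalent to
\[
k(k+n-2)-(n-1)\;\geq\;\beta\,s(1-s)\,\bigl(\mu_{k,s}-c^2_{s,\partial B_1}\bigr)\qquad\text{for every }k\geq 2.
\]

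It then remains to verify $(i)$ that $\mu_{k,s}-c^2_{s,\partial B_1}>0$ for every $k\geq 2$, which comes from the spectral properties of the surface fractional operator on $S^{n-1}$ (expressible via Gegenbauer polynomial identities, as in Figalli--Fusco--Maggi--Millot--Morini), and $(ii)$ that the ratio $\beta_k:=\tfrac{k(k+n-2)-(n-1)}{s(1-s)(\mu_{k,s}-c^2_{s,\partial B_1})}$ is nondecreasing in $k$, so that the sharp threshold is attained at $k=2$; the asymptotics $\mu_{k,s}\sim k^s$ versus $k(k+n-2)\sim k^2$ already indicate that $\beta_k\to\infty$, making $k=2$ the candidate. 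The main technical calculation, which I expect to be the most delicate step, is the explicit identification
\[
s(1-s)\bigl(\mu_{2,s}-c^2_{s,\partial B_1}\bigr)=\frac{s(n+s)\,s(1-s)\,P_s(B_1)}{P(B_1)},
\]
from which the announced formula $\beta_2^*=\tfrac{n+1}{s(n+s)}\tfrac{P(B_1)}{s(1-s)P_s(B_1)}$ follows. I would carry this out by testing the nonlocal quadratic form on the degree-$2$ spherical harmonic $Y_2(x)=x_1^2-\tfrac{1}{n}$, writing the resulting double integral on $S^{n-1}$ and reducing it to $P_s(B_1)$ via symmetrization and the sphere identity $|x-y|^2=|\nu(x)-\nu(y)|^2$. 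Finally, the limits $s\to 0^+$ and $s\to 1^-$ are direct: insert the asymptotics $sP_s(E)\to 2n\omega_n|E|$ and $(1-s)P_s(E)\to 2\omega_{n-1}P(E)$ from the introduction into the closed form of $\beta_2^*$, noting that $\tfrac{n+1}{s(n+s)}\to\infty$ as $s\to 0$ drives $\beta_2^*\to\infty$ while at $s=1$ the prefactor is $1$ and the ratio reduces to the indicated finite value.
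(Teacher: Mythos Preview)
Your approach is essentially the same as the paper's: diagonalize \eqref{secondvariation} in spherical harmonics, use the volume constraint and translation invariance to discard $k=0,1$, and reduce stability to $\beta\leq\frac{1}{s(1-s)}\inf_{k\geq2}\frac{\lambda_k^1-\lambda_1^1}{\lambda_k^s-\lambda_1^s}$ with the infimum attained at $k=2$. The two places you leave as ``to be verified'' are exactly where the paper does the work: the monotonicity of the ratio in $k$ is proved by an explicit induction (Lemma~\ref{lemmabeta2}), not by the asymptotic heuristic you mention, and the value at $k=2$ is read off from formulas (2.11), (2.12) and Proposition~7.2 of \cite{figalli} rather than by the direct double-integral computation you propose.
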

Using this Proposition and proceeding as in \cite{figalli}, we can then prove:
\begin{theorem}
With $\beta_2^*$ defined as in \eqref{eq:beta2}, we have:
If $\beta< \beta_2^*$ then $B_1$ is a local volume-constrained  minimizer of $\F_{s,\beta}$. 
If $\beta> \beta_2^*$ then $B_1$ is not a local volume-constrained  minimizer of $\F_{s,\beta}$. 
\end{theorem}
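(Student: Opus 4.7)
The plan is to split the two statements and treat them separately. For the positive direction ($\beta<\beta_2^*$) the strategy is a second-order Taylor expansion of $\F_{s,\beta}$ around $B_1$ combined with the strict stability given by Proposition \ref{prop:stable}. For the negative direction ($\beta>\beta_2^*$) the strategy is to exhibit an explicit volume-preserving flow along which the second variation is negative, and then to check that the second-order term dominates the higher-order remainder.

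For $\beta<\beta_2^*$, I would proceed as follows. First, reduce to sets whose boundaries are normal graphs over $\partial B_1$: by the regularity theory of Section \ref{sec:ndg} (Theorem \ref{regularity}) and the quantitative isoperimetric inequality already used in Lemma \ref{lem:prelbound}, any set $E$ which is $L^1$-close to $B_1$, after a translation, can be written as $\partial E=\{(1+u(x))x:x\in\partial B_1\}$ with $u$ small in $C^{1,\frac{1-s}{2}}$. Next, realize such an $E$ as $E=\Phi_1(B_1)$ for the flow generated by a vector field $X$ with $\zeta=X\cdot\nu\simeq u$ on $\partial B_1$, and apply Taylor's formula
\[
\F_{s,\beta}(E)-\F_{s,\beta}(B_1)=\tfrac{1}{2}\delta^2\F_{s,\beta}(B_1)[X]+R(u),
\]
where $R(u)=o(\|u\|_{H^{(1+s)/2}}^2)$ is controlled by cubic-type terms in $u$ using the same kind of estimates that appeared in Proposition \ref{prop:1+snorm}. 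The strict inequality $\beta<\beta_2^*$ in Proposition \ref{prop:stable}, combined with the fact that the volume constraint \eqref{eq:vol} confines $\zeta$ to the space orthogonal to the constants and (after translation) the coordinate functions $x_i$, gives a coercive estimate
\[
\delta^2\F_{s,\beta}(B_1)[X]\geq c(\beta_2^*-\beta)\,\bigl(\|\zeta\|_{H^{(1+s)/2}(\partial B_1)}^2+\|\zeta\|_{L^2(\partial B_1)}^2\bigr),
\]
so the quadratic term dominates the remainder once $u$ is small enough. This gives $\F_{s,\beta}(E)\geq\F_{s,\beta}(B_1)$ in a $C^1$-neighbourhood, and standard $\eps$-regularity plus the classical strong-to-$L^1$ improvement (as in the Cicalese--Leonardi / Acerbi--Fusco--Morini selection principle, used in the local form also in \cite{figalli}) upgrades this to a genuine $L^1$-neighbourhood.

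For $\beta>\beta_2^*$, Proposition \ref{prop:stable} furnishes a volume-preserving vector field $X_0$ with $\delta^2\F_{s,\beta}(B_1)[X_0]<0$. Let $\Phi_t$ be its flow, and set $E_t=\Phi_t(B_1)$; the vector field is designed so that $|E_t|=|B_1|$ for all small $t$. A direct expansion
\[
\F_{s,\beta}(E_t)-\F_{s,\beta}(B_1)=\tfrac{t^2}{2}\delta^2\F_{s,\beta}(B_1)[X_0]+O(t^3)
\]
shows that for $t$ sufficiently small and non-zero, $\F_{s,\beta}(E_t)<\F_{s,\beta}(B_1)$. Since $E_t\to B_1$ in $L^1$ as $t\to 0$, the ball is not a local minimizer.

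The main obstacle is the first direction: translating the second-order stability into local minimality requires both the correct quantitative control of the remainder $R(u)$ in the Taylor expansion (which is where the nonlocality of $P_s$ is most delicate, and where one needs estimates of the type proved in Lemma \ref{PJ(tE)} and Proposition \ref{prop:1+snorm}) and the passage from $C^1$ to $L^1$ neighbourhoods, for which one typically has to run a contradiction/selection-principle argument combining regularity (Theorem \ref{regularity}), the density estimates of Section \ref{sec:ndg}, and the quantitative isoperimetric inequality. The second direction is essentially a one-line second-variation computation once Proposition \ref{prop:stable} is available.
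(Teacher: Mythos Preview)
Your proposal is correct and follows exactly the approach the paper has in mind: the paper does not give its own proof of this theorem but simply writes ``Using this Proposition and proceeding as in \cite{figalli}, we can then prove'' the result, and the argument you outline (strict second-variation coercivity plus a selection-principle/Fuglede argument for $\beta<\beta_2^*$, and an explicit destabilizing flow for $\beta>\beta_2^*$) is precisely the scheme of \cite{figalli}. One small refinement: the coercivity you state in $H^{(1+s)/2}$ is valid, but since the leading term here is the classical perimeter, the more useful form (and the one that controls the remainder for $P$) is coercivity in $H^1$, namely $\delta^2\F_{s,\beta}(B_1)[X]\geq c(1-\beta/\beta_2^*)\bigl([\zeta]_{H^1}^2-\lambda_1^1\|\zeta\|_{L^2}^2\bigr)$, which follows from the same spectral computation via Lemma \ref{lemmabeta2}.
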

Note that the definition of $\beta$, \eqref{eq:beta}, then yields
$$ \gamma^*_2 = \beta_2^* |B_1|^{\frac{1-s}{n}}.$$
\begin{proof}[Proof of Proposition \ref{prop:stable}]
We introduce the set
\[
\widetilde{H}^1(\partial B_1)=\left\{ \zeta\in H^1(\partial B_1), \int_{\partial B_1}{\zeta d\mathcal{H}^{n-1}}=0 \right\},
\]
and we denote
$$
[\zeta]_{H^1(\partial B_1}^2 = \int_{\partial B_1}{|\nabla_{\tau}\zeta|^2d\mathcal{H}^{n-1}}, \quad [\zeta]^2_{H^{\frac{1+s}{2}}(\partial B_1)} =\int\!\!\!\int_{\partial B_1\times \partial B_1} \frac{|\zeta(x)-\zeta(y)|^2}{|x-y|^{n+s}} d\H^{n-1}_xd\H^{n-1}_y.
$$
Using \eqref{secondvariation}, we see that the condition $\delta^2 \F_{s,\beta}(B_1)[X]\geq 0$ for all $X$ is equivalent to
 \begin{equation}\label{beta2}
\beta
\leq \beta_2^*:=\frac{1}{s(1-s)}\inf_{\zeta \in \tilde{H}^1(\partial B_1)}{\frac{[\zeta]_{H^1(\partial B_1}^2-c_{\partial B_1}^2\|\zeta\|_{L^2(\partial B_1)}^2}{[\zeta]^2_{H^{\frac{1+s}{2}}(\partial B_1)}-c_{s,\partial B_1}^2\|\zeta\|_{L^2(\partial B_1)}^2}}.
\end{equation}
%Then when $\beta>\beta_2$, $\delta^2J_{\beta}(B_1)[X]<0$ for some $X\in C_C^{\infty}(\R^n;\R^n)$ and $B_1$ is not a local minimizer; and when $\beta<\beta_2$, $\delta^2J_{\beta}(B_1)[X]\> 0$ for all $X$ and $B_1$ is a local minimizer. 
In order to find the value of $\beta$ given by \eqref{eq:beta2}, we follow \cite{figalli} again by introducing an orthogonal basis for $L^2(\partial B_1)$ given by 
$\left\{ \{ Y^i_k\}_{i=1}^{d(k)} \right\}_{k\in \N}$ (where $ \{ Y^i_k\}_{i=1}^{d(k)}$ is an orthogonal basis of where  the fine dimensional subspace of spherical harmonics of degree $k$).
Denoting $a_k^i(\zeta) = \int_{\partial  B_1} \zeta Y_k^i\, d\H^{n-1}$, we then have:
\begin{align*} 
\|\zeta\|_{L^2(\partial B_1)}^2 & = \sum_{k=0}^\infty \sum_{i=1}^{d(k)} a_k^i(\zeta)^2 \\
[\zeta]_{H^1(\partial B_1}^2& = \sum_{k=0}^\infty \sum_{i=1}^{d(k)} \lambda_k^1 a_k^i(\zeta)^2 \\
[\zeta]^2_{H^{\frac{1+s}{2}}(\partial B_1)} & = \sum_{k=0}^\infty \sum_{i=1}^{d(k)} \lambda_k^s a_k^i(\zeta)^2 
\end{align*}
which 
implies 
$$\beta_2^*=\frac{1}{s(1-s)}\inf_{k\geq 2}{\frac{\lambda_k^1-\lambda_1^1}{\lambda_k^s-\lambda_1^s}}.$$
Lemma \ref{lemmabeta2} below states that this minimum is reached when $k=2$ and so
\[
\beta_2^*=\frac{1}{s(1-s)}\frac{\lambda_2^1-\lambda_1^1}{\lambda_2^s-\lambda_1^s}.
\]
Finally, using formula (2.11), (2.12) and Proposition 7.2 in \cite{figalli}, we conclude:
\[
\beta_2^*=\frac{n+1}{s(n+s)}\frac{P(B_1)}{s(1-s)P_s(B_1)}.
\]
\end{proof}

\begin{lemma}\label{lemmabeta2}
For all $n$ and $s\in(0,1)$, we have:
\[
\inf_{k\geq 2}{\frac{\lambda_k^1-\lambda_1^1}{\lambda_k^s-\lambda_1^s}}=\frac{\lambda_2^1-\lambda_1^1}{\lambda_2^s-\lambda_1^s}.
\]
\end{lemma}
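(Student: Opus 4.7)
The plan is to show that the sequence $R(k) := (\lambda_k^1 - \lambda_1^1)/(\lambda_k^s - \lambda_1^s)$ is non-decreasing for $k\geq 2$, from which the lemma follows immediately since the infimum of a non-decreasing sequence is attained at its starting index. I would begin by recalling the explicit formulas from \cite{figalli}: on $S^{n-1}$ the Laplace--Beltrami eigenvalue on spherical harmonics of degree $k$ is $\lambda_k^1 = k(k+n-2)$, so
$$\lambda_k^1 - \lambda_1^1 = (k-1)(k+n-1),$$
while $\lambda_k^s$ admits a closed-form expression as a ratio of Gamma functions (e.g.\ (2.12) in \cite{figalli}). Both $\lambda_k^1$ and $\lambda_k^s$ are strictly increasing in $k$.

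Next, $R(k+1) \geq R(k)$ is equivalent to the cross-multiplied inequality
$$\frac{\lambda_{k+1}^1 - \lambda_1^1}{\lambda_k^1 - \lambda_1^1} \;\geq\; \frac{\lambda_{k+1}^s - \lambda_1^s}{\lambda_k^s - \lambda_1^s}.$$
By a standard telescoping/discrete Cauchy mean-value argument, this monotonicity follows provided the ratio of consecutive differences $(\lambda_{k+1}^1 - \lambda_k^1)/(\lambda_{k+1}^s - \lambda_k^s)$ is itself non-decreasing in $k$. Here the numerator equals $2k+n-1$, which is linear in $k$, while the denominator — computed via $\Gamma(z+1)=z\Gamma(z)$ from the Gamma-ratio formula — behaves like a power of $k$ with exponent smaller than $1$, in keeping with the asymptotic $\lambda_k^s \sim c\, k^{2s}$ versus $\lambda_k^1 \sim k^2$. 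A cleaner conceptual route is to exhibit a concave increasing function $g$ on a real interval satisfying $g(\lambda_k^1) = \lambda_k^s$ for all $k\in\mathbb{N}$, obtained by inverting $\mu = k(k+n-2)$ (via $k = \tfrac{-(n-2) + \sqrt{(n-2)^2+4\mu}}{2}$) and substituting into the closed-form expression for $\lambda_k^s$. For any concave $g$, the map $t\mapsto (t-\lambda_1^1)/(g(t)-g(\lambda_1^1))$ is non-decreasing on $t>\lambda_1^1$, since $g(t)-g(\lambda_1^1)\geq g'(t)(t-\lambda_1^1)$ by concavity; evaluating at $t=\lambda_k^1$ yields the monotonicity of $R$.

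The main obstacle is verifying concavity of $g$ (or equivalently the discrete inequality $R(k+1)\geq R(k)$) \emph{uniformly} across $n\geq 2$, $s\in(0,1)$, and all $k\geq 2$. The asymptotic relation $\lambda_k^s \sim C(\lambda_k^1)^s$ with $s<1$ gives the right qualitative picture, but finite-$k$ behavior requires an honest estimate. I would attempt this by differentiating $g$, expressing $g'(\mu)$ as a combination of digamma values $\psi=\Gamma'/\Gamma$, and invoking the standard monotonicity and convexity properties of $\psi$ on $(0,\infty)$. Should that calculation become unwieldy, a direct inductive verification of $R(k+1)\geq R(k)$ is a viable fallback: at each step the claim reduces to a polynomial-times-Gamma-ratio inequality in $k$ that simplifies, using $0<s<1$ and the shift $\Gamma(z+1)=z\Gamma(z)$, to an elementary inequality of the form $(2k+n-1)(\text{something}_k^s) \geq (\text{linear in } k)(\text{something}_{k+1}^s)$ which can be checked termwise.
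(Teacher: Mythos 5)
Your approach is correct but genuinely different from the paper's. The paper proves the weaker statement $R(k)\geq R(2)$ directly by induction: it sets $A_k := R(k)/R(2)$, expresses $\lambda_k^s-\lambda_1^s$ via the product $p_k=\prod_{j=1}^{k-1}\tfrac{j+(n+s)/2}{j+(n-2-s)/2}$, and uses the hypothesis $A_k\geq 1$ together with the one-step recursion $p_{k+1}=p_k\cdot\tfrac{k+(n+s)/2}{k+(n-2-s)/2}$ to reduce $A_{k+1}\geq 1$ to the inequality $(n+k)(k-1)(1-s)\geq 0$. You instead propose the stronger claim that $R(k)$ is non-decreasing, and you reduce it (via the mediant/discrete Cauchy observation) to monotonicity of the consecutive difference ratios $\Delta a_k/\Delta b_k$ where $a_k=\lambda_k^1-\lambda_1^1$, $b_k=\lambda_k^s-\lambda_1^s$. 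This reduction is clean: writing $a_{k+1}b_k-a_kb_{k+1}=\sum_{j<k}\Delta b_j\,\Delta b_k\left(\tfrac{\Delta a_k}{\Delta b_k}-\tfrac{\Delta a_j}{\Delta b_j}\right)$ shows the signs all cooperate once $\Delta a_j/\Delta b_j$ is non-decreasing. And carrying out the computation you left unfinished, $\Delta a_k=2k+n-1$ and $\Delta b_k\propto p_k\cdot\tfrac{1+s}{k+(n-2-s)/2}$, so $\Delta a_{k+1}/\Delta b_{k+1}\geq \Delta a_k/\Delta b_k$ reduces after clearing denominators to $(2k+n+1)\big(k+\tfrac{n-s}{2}\big)\geq(2k+n-1)\big(k+\tfrac{n+s}{2}\big)$, which simplifies to $(1-s)(2k+n)\geq 0$. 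So your route closes, yields a slightly stronger result (genuine monotonicity of $R$), and ends at an even cleaner elementary inequality than the paper's. Two remarks: first, you should commit to the discrete mediant argument rather than the continuous-concavity variant — constructing and verifying concavity of an interpolating $g$ on the whole real interval is strictly harder than the discrete statement you actually need, and you would just re-derive the same consecutive-difference inequality anyway. Second, the hand-waving about asymptotics (``behaves like a power of $k$ with exponent smaller than $1$'') should be replaced by the finite-$k$ computation above; the asymptotics alone, as you rightly note, do not settle small $k$.
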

\begin{proof}
To prove for every $k\geq 2$,
\[
\frac{\lambda_k^1-\lambda_1^1}{\lambda_k^s-\lambda_1^s}\geq \frac{\lambda_2^1-\lambda_1^1}{\lambda_2^s-\lambda_1^s},
\]
it is equivalent to prove
\[
A_k=\frac{k(k+n-2)-n+1}{n+1}\frac{\frac{1+\frac{n+s}{2}}{1+\frac{n-2-s}{2}}-1}{\frac{\prod_{j=1}^{k-1}{(j+\frac{n+s}{2})}}{\prod_{j=1}^{k-1}{(j+\frac{n-2-s}{2})}}-1}\geq 1
\]
for all $k\geq 2$.
We prove by deduction. When $k=2$, $A_2=1$. Assume $A_k\geq 1$ which implies
 \begin{equation}\label{Akexp}
\frac{\prod_{j=1}^{k-1}{(j+\frac{n+s}{2})}}{\prod_{j=1}^{k-1}{(j+\frac{n-2-s}{2})}}\leq \frac{(k-1)(n+k-1)}{n+1}\frac{1+s}{1+\frac{n-2-s}{2}}+1,
\end{equation}
and we aim to prove $A_{k+1}\geq 1$. With \eqref{Akexp} and 
\[
\frac{\prod_{j=1}^{k}{(j+\frac{n+s}{2})}}{\prod_{j=1}^{k}{(j+\frac{n-2-s}{2})}}=\frac{\prod_{j=1}^{k-1}{(j+\frac{n+s}{2})}}{\prod_{j=1}^{k-1}{(j+\frac{n-2-s}{2})}}\frac{k+\frac{n+s}{2}}{k+\frac{n-2-s}{2}},
\]
$A_{k+1}\geq 1$ is true if we can show
\[
\left(\frac{(k-1)(n+k-1)}{n+1}\frac{1+s}{1+\frac{n-2-s}{2}}+1\right)\frac{k+\frac{n+s}{2}}{k+\frac{n-2-s}{2}}\leq \frac{k(n+k)}{n+1}\frac{1+s}{1+\frac{n-2-s}{2}}+1.
\]
After simplification, it is equivalent to 
\[
(n+k)(k-1)(1-s)\geq 0
\]
and this implies $A_{k+1}\geq 1$ if $A_k\geq 1$. By induction we prove $A_k\geq 1$ for all $k\geq 2$.
\end{proof}

In order to complete the proof of Theorem \ref{thm:beta1beta2}, it only remains to show that $\beta_0<\beta_2$. This follows from the following Lemma:
\begin{lemma}\label{lem:beta}
For all $n$ and $s$, we have
$$ \beta_1^* \leq \frac{2^{\frac{1}{n}}-1}{2^{\frac{s}{n}}-1}\frac{P(B_1)}{s(1-s)P_s(B_1)} < \beta_2^*.$$
\end{lemma}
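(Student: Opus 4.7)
\textbf{Proof plan for Lemma \ref{lem:beta}.}

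\emph{First inequality (competitor with two disjoint balls).} The natural competitor is the set $F_R = B_r(x_1) \cup B_r(x_2)$ where $|B_r| = |B_1|/2$, i.e.\ $r = 2^{-1/n}$, and $|x_1 - x_2| = R$. Then $|F_R| = |B_1|$ and, using the decomposition formula \eqref{eq:PKAB} together with the scaling $P(B_r) = 2^{-(n-1)/n}P(B_1)$ and $P_s(B_r) = 2^{-(n-s)/n} P_s(B_1)$, one computes
\[
\F_{s,\beta}(F_R) = 2\,\F_{s,\beta}(B_r) + 4\beta s(1-s)\!\int_{\R^n}\!\!\int_{\R^n}\!\! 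K_s(x-y)\chi_{B_r(x_1)}(x)\chi_{B_r(x_2)}(y)\,dx\,dy,
\]
where $K_s(z)=|z|^{-n-s}$. The double integral tends to $0$ as $R\to\infty$ by dominated convergence (using $K_s\in L^1$ away from the origin). If $B_1$ is a global minimizer of $\F_{s,\beta}$, then $\F_{s,\beta}(B_1) \leq \F_{s,\beta}(F_R)$ for all $R$, so passing to the limit yields $\F_{s,\beta}(B_1) \leq 2\,\F_{s,\beta}(B_r)$. Rewriting and noting that $1 - 2^{1/n} < 0$ and $1 - 2^{s/n} < 0$, this is equivalent to
\[
P(B_1)(2^{1/n}-1) \geq \beta s(1-s) P_s(B_1)(2^{s/n}-1),
\]
which gives the stated upper bound on $\beta_1^*$ (then $\gamma_1^* = \beta_1^*|B_1|^{(1-s)/n}$).

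\emph{Second inequality (elementary calculus).} After dividing out the common positive factor $\tfrac{P(B_1)}{s(1-s)P_s(B_1)}$, the claim reduces to
\[
\frac{2^{1/n}-1}{2^{s/n}-1} \;<\; \frac{n+1}{s(n+s)}, \qquad s\in(0,1),
\]
equivalently, $H(s) := (n+1)(2^{s/n}-1) - s(n+s)(2^{1/n}-1) > 0$ on $(0,1)$. I would observe that $H(0) = H(1) = 0$ and
\[
H'''(s) = \frac{(n+1)(\log 2)^3}{n^3}\, 2^{s/n} > 0,
\]
so $H''$ is strictly increasing on $[0,1]$, and it suffices to check $H''(1)<0$ in order to conclude that $H$ is strictly concave on $[0,1]$ and hence strictly positive on $(0,1)$.

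\emph{Checking $H''(1) < 0$.} The inequality $H''(1)<0$ rewrites, with $\alpha := (\log 2)/n$, as
\[
\frac{1}{2}\left(1+\frac{1}{n}\right)\alpha^2 \;<\; 1 - e^{-\alpha}.
\]
Since $(1+1/n)/2 \leq 1$ for $n\geq 1$, it suffices to verify $\alpha^2 < 1-e^{-\alpha}$ on the interval $\alpha\in(0,\log 2]$. Setting $\phi(\alpha) = 1-e^{-\alpha}-\alpha^2$, one has $\phi(0)=0$, $\phi'(0)=1>0$, and $\phi(\log 2) = \tfrac{1}{2}-(\log 2)^2 > 0$; since $\phi''(\alpha)=-e^{-\alpha}-2<0$, $\phi$ is strictly concave, so $\phi>0$ throughout $(0,\log 2]$. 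This finishes the verification.

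\emph{Main obstacle.} Conceptually the first inequality is the substantive part: one must choose the right competitor and verify that two half-volume balls infinitely separated give a strictly lower energy than $B_1$ in the regime above the stated threshold. The second inequality is purely arithmetic; the mild technical point is arranging the calculus in a way that covers every admissible pair $(n,s)$ with $n\geq 2$ and $s\in(0,1)$, which the strict concavity/alignment of zeros argument handles uniformly.
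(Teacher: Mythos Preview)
Your treatment of the first inequality is correct and is exactly the paper's argument: compare $B_1$ with two half-volume balls sent to infinity.

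For the second inequality your overall plan (show $H(0)=H(1)=0$ and $H$ strictly concave) is also the paper's, but there is a genuine algebra slip in your verification of $H''(1)<0$. From
\[
H''(s)=(n+1)\frac{(\log 2)^2}{n^2}\,2^{s/n}-2\bigl(2^{1/n}-1\bigr),
\]
the condition $H''(1)<0$ is equivalent (after dividing by $2\cdot 2^{1/n}$) to
\[
\frac{(n+1)(\log 2)^2}{2n^2}<1-e^{-\alpha},\qquad \alpha=\frac{\log 2}{n}.
\]
Since $\alpha^2=(\log 2)^2/n^2$, the left-hand side equals $\dfrac{n+1}{2}\,\alpha^2$, \emph{not} $\dfrac{1}{2}\bigl(1+\tfrac{1}{n}\bigr)\alpha^2$; you dropped a factor of $n$. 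With the correct coefficient your simplification ``$(1+1/n)/2\leq 1$'' no longer applies, and the reduction to $\alpha^2<1-e^{-\alpha}$ is unjustified.

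The inequality $\tfrac{n+1}{2}\alpha^2<1-e^{-\alpha}$ is still true for $n\geq 2$ (substituting $n=(\log 2)/\alpha$ it becomes $\tfrac{1}{2}(\log 2+\alpha)\alpha<1-e^{-\alpha}$ on $(0,(\log 2)/2]$, which one checks by the same concavity trick), so your strategy can be repaired; alternatively, the paper bounds $H''(s)$ directly via $2^{s/n}\leq 2^{1/n}$ and the elementary estimate $\dfrac{2^{1/n}}{n(2^{1/n}-1)}\leq \dfrac{\sqrt{2}}{2(\sqrt{2}-1)}$ for $n\geq 2$, then compares with $\dfrac{2}{n+1}$.
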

\begin{proof}[Proof of Lemma \ref{lem:beta}]
To prove the lemma, we will show that for $\beta>\bar \beta= \frac{2^{\frac{1}{n}}-1}{2^{\frac{s}{n}}-1}\frac{P(B_1)}{s(1-s)P_s(B_1)} $,  $B_1$ is not a global minimizer of $\F_{s,\beta}$. 
This is done by showing that the set made of two balls (each with volume $|B_1|/2$) far away from each other has a lower energy than $B_1$ when $\beta>\bar \beta$.

Let $E_R=B_{2^{-\frac{1}{n}}}(0)\cup B_{2^{-\frac{1}{n}}}(Re_1)$. Then $|E_R|=|B_1|$ and  
\[
\F_{s,\beta}(E_R)=2^{\frac{1}{n}}P(B_1)-\beta s(1-s)2^{\frac{s}{n}}P_s(B_1) + \mathcal O(1/R^{n+s}).
\] 
It is thus a simple exercise to show that $\F_{s,\beta}(E_R)<\F_{s,\beta}(B_1)$ if $\beta>\bar \beta$ and $R$ is large enough.

It remains to prove that $\bar \beta<\beta_2^*$. It is equivalent to show for $s\in(0,1)$ and $n\geq 2$,
\[
f(s):=\frac{2^{s/n}-1}{2^{1/n}-1}-\frac{s(n+s)}{n+1}>0.
\]
We can see 
 \begin{equation*}
 \begin{aligned}
f''(s)=\frac{(\ln{2})^22^{s/n}}{n^2(2^{1/n}-1)}-\frac{2}{n+1}&<\frac{(\ln{2})^22^{1/n}}{n^2(2^{1/n}-1)}-\frac{2}{n+1}\\
&\leq \frac{(\ln{2})^2\sqrt{2}}{2(2^{1/2}-1)n}-\frac{2}{n+1}\\
&<\frac{1}{n}-\frac{2}{n+1}<0,
\end{aligned}
\end{equation*}
in which we use the fact that
$\inf_{n\geq 2}{n(2^{1/n}-1)}=2(2^{1/2}-1)$.
Since $f(s)$ is defined on $[0,1]$ with $f(0)=f(1)=0$ and $f''(s)<0$, we can prove $f(s)>0$ for all $n\geq 2$ and $s\in(0,1)$ and thus $\beta_1^*\leq \bar \beta< \beta_2^*$. 
\end{proof}

\section{Proof of Theorem \ref{thm:G}}\label{sec:G}
In order to prove Theorem \ref{thm:G},
we first add a confinement term to the energy functional $\J_\alpha$:
$$
\J_{\alpha,\beta} (E) = P(E) - \alpha s(1-s)P_K(E) + \beta \int_{\R^n} |x|\chi_E(x)\, dx.
$$
We then have the following result:
\begin{proposition}\label{prop:minbeta}
For all $\alpha>0$, $\beta>0$ and $m>0$, there exists a minimizer $E$ of $\J_{\alpha,\beta}$ with the constraint $|E|=m$. This minimizer satisfies
\begin{equation}\label{eq:minbound}
P(E) + \beta \int_{\R^n} |x| \chi_E(x)\, dx \leq C((1+(n\omega_n \gamma)^\frac{1}{1-s}) m^{\frac{n-1}{n}} + \beta m^\frac{n+1}{n}), \qquad \gamma =  \alpha\, m^{\frac{1-s}{n}}.
\end{equation}
Furthermore, with $\gamma_1$ given by Theorem \ref{thm:ball}, we have that if 
$$
\alpha\, m^{\frac{1-s}{n}}\leq  \gamma_1
$$
then $E$ is the ball of volume $m$.
\end{proposition}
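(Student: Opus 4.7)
The plan splits into three parts matching the three claims of the proposition.

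For \emph{existence}, I take a minimizing sequence $\{F_k\}$ with $|F_k|=m$. Comparing with the centered ball $B_{r_0}(0)$, where $r_0=(m/\omega_n)^{1/n}$, one has
\[
\J_{\alpha,\beta}(F_k) \leq \J_{\alpha,\beta}(B_{r_0}(0)) \leq P(B_{r_0}) + \beta\int_{B_{r_0}(0)}|x|\,dx \leq Cm^{\frac{n-1}{n}} + C\beta m^{\frac{n+1}{n}},
\]
which, combined with the coercivity inequality \eqref{eq:Jlower}, yields uniform bounds $P(F_k)\leq C$ and $\int_{\R^n}|x|\chi_{F_k}\,dx\leq C$. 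The BV bound provides $L^1_\loc$-precompactness, while the moment bound provides tightness via the Markov-type estimate $\int_{|x|>R}\chi_{F_k}\,dx \leq \tfrac{1}{R}\int|x|\chi_{F_k}\,dx \leq C/R$. A standard diagonal argument then produces a subsequence (still denoted $F_k$) and a set $E$ of finite perimeter with $|E|=m$ such that $\chi_{F_k}\to \chi_E$ in $L^1(\R^n)$. Lower semicontinuity of $P$, Fatou's lemma applied with the weight $|x|$, and Lemma \ref{lem:CVPK} for the convergence of $P_K$ then give $\J_{\alpha,\beta}(E)\leq \liminf_k \J_{\alpha,\beta}(F_k)$, so $E$ is a minimizer.

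For the \emph{bound} \eqref{eq:minbound}, I combine \eqref{eq:Jlower} applied to $E$ with $\J_{\alpha,\beta}(E)\leq \J_{\alpha,\beta}(B_{r_0}(0))$ to get
\[
\tfrac{1}{2}P(E) + \beta\int_{\R^n}|x|\chi_E\,dx \leq Cm^{\frac{n-1}{n}} + C\beta m^{\frac{n+1}{n}} + (n\omega_n\alpha)^{\frac{1}{1-s}}m,
\]
and then rewrite $(n\omega_n\alpha)^{\frac{1}{1-s}}m = (n\omega_n\gamma)^{\frac{1}{1-s}}m^{\frac{n-1}{n}}$ via the definition $\gamma=\alpha m^{(1-s)/n}$ to recover \eqref{eq:minbound}.

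For the \emph{ball identification} when $\gamma\leq \gamma_1$, I compare $E$ with $B_{r_0}(0)$:
\[
\J_\alpha(E) + \beta\int|x|\chi_E\,dx \leq \J_\alpha(B_{r_0}(0)) + \beta\int|x|\chi_{B_{r_0}(0)}\,dx.
\]
Under the hypothesis $\gamma\leq\gamma_1$, Theorems \ref{thm:existence} and \ref{thm:ball} combine to show that $\inf\{\J_\alpha(F):|F|=m\}=\J_\alpha(B_{r_0})$: the former produces a bounded minimizer of $\J_\alpha$, and the latter says such a bounded minimizer must be a ball. Hence $\J_\alpha(E)\geq \J_\alpha(B_{r_0}(0))$, and the previous inequality forces $\int|x|\chi_E\,dx \leq \int|x|\chi_{B_{r_0}(0)}\,dx$. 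By the bathtub principle (symmetric decreasing rearrangement), the centered ball $B_{r_0}(0)$ is the unique (up to a null set) minimizer of $F\mapsto \int|x|\chi_F\,dx$ over $\{|F|=m\}$, so $E=B_{r_0}(0)$. The main obstacle is the existence step, specifically the upgrade from $L^1_\loc$ to $L^1(\R^n)$ convergence via the moment bound; this tightness argument relies essentially on the confinement term and is the main reason to introduce it. Everything else reduces to direct comparison with the ball together with the previously established tools \eqref{eq:Jlower}, Lemma \ref{lem:CVPK}, and Theorems \ref{thm:existence}--\ref{thm:ball}.
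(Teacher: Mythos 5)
Your proof is correct and follows essentially the same route as the paper for the existence and the a priori bound: coercivity from \eqref{eq:Jlower}, comparison with the centered ball, BV-compactness for $L^1_{\loc}$-precompactness, and the first-moment bound for tightness, followed by lower semicontinuity of the perimeter, Lemma \ref{lem:CVPK} for $P_K$, and Fatou for the weighted term. The one substantive addition is the ball-identification step: the paper's written proof of this proposition stops after concluding that a minimizer exists, leaving the third claim to the reader. Your argument fills that gap correctly — under $\gamma\le\gamma_1$ (taking $\gamma_1\le\gamma_0$ so that Theorem \ref{thm:existence} applies), Theorems \ref{thm:existence} and \ref{thm:ball} show $\inf_{|F|=m}\J_\alpha(F)=\J_\alpha(B_{r_0})$, hence $\J_\alpha(E)\ge\J_\alpha(B_{r_0}(0))$, and minimality of $E$ for $\J_{\alpha,\beta}$ forces $\int|x|\chi_E\le\int|x|\chi_{B_{r_0}(0)}$; the bathtub principle (with strict monotonicity of $|x|$) then gives $E=B_{r_0}(0)$ up to a null set. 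In short, this is the paper's approach plus a cleanly executed proof of the final claim that the paper omits.
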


\begin{proof}
For every $m,\alpha,\beta>0$, using \eqref{eq:KP}, we can show that $\J_{\alpha,\beta}(E)$ has a lower bound
\[
\J_{\alpha,\beta}(E)\geq \frac{1}{2}P(E)+\beta\int{|x|\chi_{E}(x)dx}-(n\omega_n\alpha)^{\frac{1}{1-s}}m.
\]
In particular, there exists a minimizing sequence $\{E_j\}_{j\in \mathbb{N}}$ with $|E_j|=|B_{r_0}|=m$ and we can always assume that 
\[
\J_{\alpha,\beta}(E_j)\leq \J_{\alpha,\beta}(B_{r_0}) \qquad \forall j\in \mathbb{N}. 
\]
This implies
\begin{equation*}
\begin{aligned}
\frac{1}{2} P(E_j)+\beta\int{|x|\chi_{E_j}(x)dx}&\leq P(B_{r_0})+\beta\int{|x|\chi_{B_{r_0}}(x)dx}+(n\omega_n\alpha)^{\frac{1}{1-s}}m\\
&\leq Cm^{\frac{n-1}{n}}+\beta Cm^{\frac{n+1}{n}}+(n\omega_n\alpha)^{\frac{1}{1-s}}m\\
& \leq C(1+(n\omega_n \gamma)^{\frac{1}{1-s}})^sm^{\frac{n-1}{n}}+C\beta m^{\frac{n+1}{n}}=C(m,\alpha,\beta)
%&\leq Cm^{\frac{n-1}{n}}+\beta Cm^{\frac{n+1}{n}}+\frac{1}{2}P(E)+\frac{1}{2}(n\omega_n \gamma)^{\frac{s}{1-s}}m^{\frac{n-1}{n}},
\end{aligned}
\end{equation*}
%By \eqref{eq:PEbound}, we have
%\[
%P(E_j)\leq C(1+(n\omega_n \gamma)^{\frac{1}{1-s}})^sm^{\frac{n-1}{n}}+C\beta m^{\frac{n+1}{n}}=C(m,\alpha,\beta)
%\]
for all $j\in \N$. So $\chi_{E_j}$ is uniformly bounded in $BV(\R^n)$ and thus relatively compact in $L^1_{loc}(\R^n)$.

%Assume $E$ satisfies $\J_{\alpha,\beta}(E)\leq \J_{\alpha,\beta}(B_{r_0})$ with $|E|=|B_{r_0}|=m$. Using \eqref{eq:KP}, we obtain
%\begin{equation*}
%\begin{aligned}
%P(E)+\beta\int{|x|\chi_E(x)dx}&\leq P(B_{r_0})+\beta\int{|x|\chi_{B_{r_0}}(x)dx}+\alpha s(1-s)P_K(E)\\
%&\leq Cm^{\frac{n-1}{n}}+\beta Cm^{\frac{n+1}{n}}+\alpha c_nP(E)^s|E|^{1-s}\\
%&\leq Cm^{\frac{n-1}{n}}+\beta Cm^{\frac{n+1}{n}}+\frac{1}{2}P(E)+\frac{1}{2}(n\omega_n \gamma)^{\frac{s}{1-s}}m^{\frac{n-1}{n}},
%\end{aligned}
%\end{equation*}
%with $\gamma= \alpha m^{\frac{1-s}{n}}$. This implies
%\begin{equation}\label{eq:PEbound}
%\begin{aligned}
%P(E)+\beta\int{|x|\chi_E(x)dx}&\leq P(E)+2\beta\int{|x|\chi_E(x)dx}\\
%&\leq C(1+(n\omega_n \gamma)^{\frac{1}{1-s}})^sm^{\frac{n-1}{n}}+C\beta m^{\frac{n+1}{n}}\\
%&=C(m,\alpha,\beta).
%\end{aligned}
%\end{equation}

By a diagonal extraction argument, we deduce the existence of a subsequence $\chi_{E_{j_k}}$ and a set $E$ such that $\chi_{E_{j_k}}$ converges to $\chi_{E}$ weakly in $L^1(\R^n)$ and strongly in $L^1(B_R)$ for all $R>0$.
The lower semicontinuity of the perimeter implies
$$P(E)\leq \liminf_{k\to \infty}{P(E_{j_k})},$$
and Lemma \ref{lem:CVPK} gives
%and the boundedness in $BV(\R^n)$ implies (see Lemma
$$
P_K(E)=\lim_{j\to \infty}{P_K(E_{k_j})}.
$$
%Given any $R>0$, since $BV(B_R)\subset L^1(B_R)$, there exists a subsequence $\{\chi_{E_{j_k}}\}$ which is still denoted as $\{\chi_{E_j}\}$ such that
%\[
%\chi_{E_j}\to u
%\]
%a.e., strongly in $L^1(B_R)$, and weakly in $BV(B_R)$. We have $u=\chi_E$ for $E\subset \R^n$ since $u(x)=\lim_{j\to\infty}{\chi_{E_j}}$ a.e. and $\chi_{E_j}\in \{0,1\}$. We have $\chi_E\in BV(B_R)$ since for every $g\in C_c^1(B_R,\R^n)$ and $|g|\leq 1$,
%\[
%\int_{B_R}{\chi_E div g}=\lim_{j\to \infty}{\int_{B_R}{\chi_{E_k} div g}}\leq \liminf_{j\to \infty}{P(E_k,B_R)},
%\]
%which implies
%\begin{equation}\label{eq:Pliminf}
%P(E,B_R)=\sup{\{\int_{B_R}{\chi_E div g}, g\in C_c^1(B_R,\R^n), |g|\leq 1\}}\leq \liminf_{j\to \infty}{P(E_k,B_R)}.
%\end{equation}
%We also have
%\[
%P_K(E,B_R)=\lim_{j\to \infty}{P_K(E_j,B_R)},
%\]
%and
Finally, the strong $L^1$ convergence implies
\[
\int_{B_R}{|x|\chi_E(x)dx}=\lim_{j\to \infty}{\int_{B_R}{|x|\chi_{E_j}dx}},
\]
and using the fact that
\[
R|E_j\cap B_R^c|\leq \int_{\R^n}{|x|\chi_{E_j}(x)dx}   \leq C(m,\alpha,\beta)
\]
we obtain
\[
m-\frac{1}{R}C(m,\alpha,\beta)\leq|E_j\cap B_R|\leq m.
\]
and therefore
%Then we can use the diagnol extraction precedure to obtain a subsequence of $\{\chi_{E_j}\}$ denoted as $\{\chi_{E_k}\}$ such that for every $k\in \N$, $\chi _{E_k}\to \chi_{E}$ a.e. and strongly in $L^1(B_k)$, weakly in $BV(B_k)$ with 
$|E|=m$. 
%Using the same argument in \eqref{eq:Pliminf} we obtain
%\[
%P(E)\leq \liminf_{k\to \infty}{P(E_k)}.
%\]
Altogether, we deduce that $E$ is a minimizer of $\J_{\alpha,\beta}$ with $|E|=m$.
\end{proof}

Next, for $N>0$, we introduce
$$ \G_{\alpha,\beta} (\mathbb E)  = \sum_{i=1}^N \J_{\alpha,\beta} (E_i), \quad \mathbb E = \{E_1,\dots , E_N\}$$
and $|\mathbb E| = \sum_{i=1}^N|E_i|$. 
Proceeding as in Proposition \ref{prop:minbeta}, we can show that for all $\alpha>0$, $\beta>0$, $m>0$, and $N$ the minimization problem
\begin{equation}\label{eq:minGNR}
\inf \left\{ \G_{\alpha,\beta} (\mathbb E) \, ;\, \mathbb E = \{E_i\}_{i=1 ,\dots, N} ,\;  |\mathbb E| = m\right\} 
\end{equation}
has a minimizer.

Note that if $|E_{i_0}|=0$ for some $i_0$, then that component of $\mathbb E$ does not contribute to the energy and we can set $E_{i_0}=\emptyset$. In the sequel, we will always assume that either $E_{i}=\emptyset$ or $|E_i|>0$.

We then prove:
\begin{lemma}
There exists $m_0(\alpha)>0$ and $N_0(\alpha)$ such that if $R\geq m_0^{1/n}$ and $N\geq N_0$, given $\mathbb E = \{E_i\}_{i=1 ,\dots, N}$ a global minimizer of \eqref{eq:minGNR}, there are at most $N_0$ components $E_i$ of $\mathbb E$ which are not the empty set.
\end{lemma}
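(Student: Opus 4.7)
The plan is a merging argument. Let $\mathbb E=\{E_1,\dots,E_N\}$ be a global minimizer of \eqref{eq:minGNR}, let $k$ denote the number of non-empty components, and order the non-empty masses $0<m_1\le m_2\le\dots\le m_k$. I first observe that each $E_i$ is itself a volume-constrained minimizer of $\J_{\alpha,\beta}$ among sets of volume $m_i$ (obtained by freezing the other components), so by Proposition \ref{prop:minbeta}, whenever $\alpha m_i^{(1-s)/n}\le\gamma_1$, $E_i$ equals $B_{r_{m_i}}(0)$ and $\J_{\alpha,\beta}(E_i)=\phi(m_i)$, where
\[
\phi(\mu):=\nu_n\mu^{(n-1)/n}-\alpha s(1-s)P_K(B_{r_\mu}(0))+C_n\beta\mu^{(n+1)/n}.
\]

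Next, I would establish that $\phi$ is strictly sub-additive for small arguments: there exists $\mu_*=\mu_*(\alpha,\beta,m)>0$ such that $\phi(a)+\phi(b)>\phi(a+b)$ for all $0<a\le b\le m$ with $a\le\mu_*$. This follows from a Taylor expansion of each of the three pieces of $\phi(a)+\phi(b)-\phi(a+b)$, using the elementary inequalities $(1+x)^p\le 1+px$ for $0<p<1$ and $(1+x)^p\ge 1+px$ for $p>1$, which yield
\[
\phi(a)+\phi(b)-\phi(a+b) \;\ge\; \tfrac{\nu_n}{n}\,a^{(n-1)/n}\;-\;C\alpha\,a^{(n-s)/n}\;-\;C\beta\,a\,b^{1/n}.
\]
Since $(n-1)/n$ is the smallest of the three exponents appearing on the right, the positive perimeter contribution dominates once $a\le \mu_*:=c\,\min\bigl\{\alpha^{-n/(1-s)},\,\beta^{-n}/m\bigr\}$, uniformly in $b\le m$.

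Finally, if two components satisfied $m_1,m_2\le\mu_*$, I would merge them by replacing $\{E_1,E_2\}$ with a single ball $B_*$ of volume $m_1+m_2$ centered at the centre of mass $x_{\mathrm{cm}}$ of $E_1\cup E_2$. The identity $|x_{\mathrm{cm}}|(m_1+m_2)\le\int|x|\chi_{E_1}\,dx+\int|x|\chi_{E_2}\,dx$ bounds the confinement cost of $B_*$ in terms of the original confinements, and combined with the sub-additivity estimate above the total energy change reduces (modulo a lower-order adjustment to displace any $E_j$, $j\ge 3$, intersecting $B_*$) to $\phi(m_1+m_2)-\phi(m_1)-\phi(m_2)<0$, contradicting the minimality of $\mathbb E$. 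Hence at most one $m_i$ is $\le \mu_*$ and all others satisfy $m_i>\mu_*$, so $k\le 1+m/\mu_*=:N_0$.

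The main obstacle is the sub-additivity estimate in the middle step; the cross-term $\beta\,a\,b^{1/n}$ coming from the convex confinement functional is the most delicate one and is what forces $\mu_*$ (and hence $N_0$) to depend on $\beta$ and the total mass $m$ in addition to $\alpha$. A secondary technical point is the overlap adjustment when placing $B_*$: if some $E_j$ intersects $B_*$, one translates it slightly, and the resulting perturbation of $\int|x|\chi_{E_j}$ is of order $\beta\,r_*\,m_j$ with $r_*\sim (m_1+m_2)^{1/n}$, which is strictly lower order than the gain $\tfrac{\nu_n}{n}\,m_1^{(n-1)/n}$ from the sub-additivity.
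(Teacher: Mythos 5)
Your proof is correct and follows essentially the same route as the paper's (very terse) argument: each nonempty component is itself a volume-constrained minimizer of $\J_{\alpha,\beta}$, hence is the origin-centered ball once its mass is below $(\gamma_1/\alpha)^{n/(1-s)}$ by Proposition \ref{prop:minbeta}, and strict sub-additivity of the ball energy $\phi$ at small masses lets you merge two such small balls into one of the combined mass, strictly lowering the energy and contradicting minimality, so at most one component can be small. One simplification worth noting: since all small components coincide with origin-centered balls and the collection $\mathbb E=\{E_i\}$ is not required to consist of pairwise disjoint sets, the merged ball can simply be taken at the origin and no ``overlap adjustment'' or repositioning of the other $E_j$ is needed.
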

\begin{proof}
We fix $m_0$ such that for all $m\leq m_0$ the minimizer of $\J_{\alpha,\beta}$ with constraint $|E|=m$ is the ball and such that the energy of two balls of mass $2m$ in total is bigger than that of one ball of mass $2m$.
It is then clear that $\mathbb E$ can have at most $1$ component with Lebesgue measure smaller than $m_0$ and therefore we can take $N_0 = \frac{m}{m_0}+1$.
\end{proof}

From now on, we fix $N\geq N_0+1$ so that at least one component of $\mathbb E$ has zero Lebesgue measure (and since we can remove such components without changing the energy, we can assume that one of the component is the empty set). 
We then have
\begin{lemma}
Let $\mathbb E = \{E_i\}_{i=1 ,\dots, N}$ be a  global minimizer of \eqref{eq:minGNR}. Then each component $E_i$ is indecomposable.
\end{lemma}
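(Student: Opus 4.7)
The plan is to argue by contradiction. Suppose that some component $E_{i_0}$ admits a decomposition $E_{i_0} = A \cup B$ with $A, B$ essentially disjoint, $|A|, |B| > 0$, and $P(E_{i_0}) = P(A) + P(B)$. By the preceding lemma, since $N \geq N_0 + 1$, we may assume at least one slot is empty, say $E_j = \emptyset$, which gives us a free position in the collection to work with.

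I would construct a competitor $\mathbb E'$ by replacing $E_{i_0}$ with $A$ and replacing $E_j$ with a translate $B - \xi$ for a $\xi \in \R^n$ to be chosen. Since perimeter and $P_K$ are translation invariant, $P(E_{i_0}) = P(A) + P(B)$ by decomposability, and $\int |x|\chi_{E_{i_0}} dx = \int|x|\chi_A dx + \int|x|\chi_B dx$ by disjointness, a direct computation using the identity \eqref{eq:PKAB} (applied to $A$ and $B$) gives
\[
\G_{\alpha,\beta}(\mathbb E') - \G_{\alpha,\beta}(\mathbb E) = -4\alpha s(1-s)\int_{\R^n}\!\!\int_{\R^n} K(x-y)\chi_A(x)\chi_B(y)\, dx\, dy + \beta\int_{\R^n}\bigl(|y-\xi|-|y|\bigr)\chi_B(y)\, dy.
\]
Taking $\xi = 0$ eliminates the confinement contribution, and since $K \geq 0$ the remaining nonlocal term is non-positive. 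To obtain a strict contradiction with minimality, one needs $\int\!\int K(x-y)\chi_A \chi_B\, dx\, dy > 0$.

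I would establish this positivity by combining two ingredients: first, the kernel $K$ is non-increasing and non-trivial (the trivial case $K \equiv 0$ reducing $\J_\alpha$ to the classical perimeter, whose unique minimizer is the indecomposable ball), so $K > 0$ on some open ball around the origin; second, each component $E_i$ of the minimizer is bounded, by a non-degeneracy and diameter argument parallel to Proposition \ref{lem:indecomposable} and adapted to the confined energy $\J_{\alpha,\beta}$. Boundedness of $E_{i_0}$ confines $A, B$ to a common ball, so pairs $(x,y) \in A \times B$ with $|x-y|$ inside the positivity ball of $K$ exist with positive $(\H^n \otimes \H^n)$-measure, yielding strict positivity of the integral.

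The main obstacle I anticipate is the borderline case in which $A$ and $B$ end up sufficiently separated within $E_{i_0}$ that $K(x-y) = 0$ a.e.\ on $A \times B$ despite boundedness. In that event I would exploit the confinement term instead: choose $\xi$ to equal the geometric median of $B$, which strictly reduces $\int|y-\xi|\chi_B\, dy$ below $\int|y|\chi_B\, dy$ unless the origin already minimizes the convex functional $\xi \mapsto \int|y-\xi|\chi_B\, dy$. In that last edge case one swaps the roles of $A$ and $B$ and translates $A$ instead; the simultaneous failure of all three mechanisms (no $K$-interaction, and both $A$ and $B$ already optimally placed at the origin while remaining disjoint) is ruled out by the symmetry and regularity constraints already extracted from the minimality of $E_{i_0}$.
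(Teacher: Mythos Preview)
Taking $\xi=0$ in your construction is precisely the paper's argument: split $E_{i_0}=A\cup B$ across two slots (one previously empty), so that perimeter and confinement are unchanged and the energy drops by exactly $4\alpha s(1-s)\int\!\int K(x-y)\chi_A(x)\chi_B(y)\,dx\,dy$. The paper asserts this drop is strict and says nothing more.

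You go further and worry about strict positivity of the cross-term, which is a fair point for a general $K$ satisfying only \eqref{eq:assK}. But your proposed fix does not close the gap. Boundedness of $E_{i_0}$ does not force points of $A\times B$ into the positivity region of $K$, and your confinement-translation fallback fails in the configuration where $A$ is a ball centered at the origin and $B$ is a concentric annulus separated from $A$ by more than the support radius of $K$: the cross-term vanishes, yet both pieces already minimize $\xi\mapsto\int|y-\xi|\chi\,dy$ at $\xi=0$, so no translation helps. The vague appeal to ``symmetry and regularity constraints'' does not exclude this. (Also, invoking a diameter bound here is circular: Corollary~\ref{cor:diam} itself relies on the indecomposability you are proving.)

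In the paper's intended applications ($K(x)=|x|^{-n-s}$, or the kernel of Lemma~\ref{lem:K}) one has $K>0$ everywhere, so the cross-term is automatically positive whenever $|A|,|B|>0$ and there is nothing to fix; this is presumably what the paper has in mind. If you insist on the general setting, the cleanest route is to weaken the conclusion to ``some minimizer has all components indecomposable,'' which is all that is needed downstream and which follows by iterating the $\xi=0$ step: each split that fails to strictly decrease the energy produces another minimizer with one more nontrivial component, and the preceding lemma caps that number at $N_0$.
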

\begin{proof}
If one component $E_{i_0}$ is decomposable, that it can be written as $ E_{i_0} = F_1\cup F_2$ with $P( E_{i_0})  = P(F_1)+P( F_2)$ and $|F_i|>0$, then the set obtained  by replacing $ E_{i_0} $ by $F_1$ and replacing an empty 
component by $F_2$ will have (strictly) lower energy.
\end{proof}

Next, the crucial tool to pass to the limit $\beta\to0$ and prove our result is the following non-degeneracy estimate:
\begin{proposition}\label{prop:NDG1}
Let $\mathbb E = \{E_i\}_{i=1 ,\dots, N}$ be a  global minimizer of \eqref{eq:minGNR}.
There exists a constant $c$ depending only on $n$ and $s$ such that
\[
|E_i \cap B_r(x_0)|\geq c\min \left\{ m_i, \frac{m_i}{(n\omega_n \gamma_i)^{\frac{n}{1-s}}}, \frac{1}{m_i \beta^n},r^n\right\}
, \qquad \gamma_i =  \alpha\, m_i^{\frac{1-s}{n}}
\]
for every $x_0\in \partial E_i$ and $r>0$.
%\[
%|E_i\cap B_r(x_0)|\geq c r^n \qquad \mbox{ for all $r\leq C \min\{1,\gamma^{-\frac{1}{1-s}}\} m^{1/n}$}
%\]
\end{proposition}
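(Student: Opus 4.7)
The first step of my proof is to observe that $\G_{\alpha,\beta}(\mathbb E) = \sum_i \J_{\alpha,\beta}(E_i)$ is a sum with no cross-interaction between components. Consequently, since replacing $E_i$ by any $\tilde E_i$ with $|\tilde E_i| = m_i$ yields an admissible family for \eqref{eq:minGNR}, each component $E_i$ of the minimizer $\mathbb E$ is itself a minimizer of $\J_{\alpha,\beta}$ over sets of mass $m_i$. This decoupling reduces the proposition to establishing the analog of Lemma \ref{nondegeneracyE} for a single-component minimizer of $\J_{\alpha,\beta}$, i.e.\ to adapting that proof to accommodate the extra moment term $\beta \int|x|\chi_{E_i}\,dx$.

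Following the proof of Lemma \ref{nondegeneracyE}, I will fix $x_0 \in \partial E_i$ and introduce $V(r) = |E_i \cap B_r(x_0)|$, $A(r) = \mathcal H^{n-1}(E_i \cap \partial B_r(x_0))$, $S(r) = P(E_i, B_r(x_0))$, and $E_i' = E_i \setminus B_r(x_0)$. The competitor is the rescaled set $\tilde E_i = tE_i'$ with $t = (1+\lambda)^{1/n}$ and $\lambda = V(r)/(m_i - V(r))$, so that $|\tilde E_i| = m_i$. The inequality $\J_{\alpha,\beta}(\tilde E_i) \geq \J_{\alpha,\beta}(E_i)$ will produce the same perimeter and nonlocal estimates as in Lemma \ref{nondegeneracyE}; the only new ingredient is the moment change
\[
\int|x|\chi_{\tilde E_i}\,dx - \int|x|\chi_{E_i}\,dx = (t^{n+1}-1)\int|x|\chi_{E_i'}\,dx - \int_{E_i \cap B_r(x_0)}|x|\,dx \leq C\lambda \int|x|\chi_{E_i}\,dx.
\]

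The next step is to control this using the a priori bound \eqref{eq:minbound} from Proposition \ref{prop:minbeta}, which applies to $E_i$ precisely thanks to the decoupling observation above. Combined with $\lambda \leq 2V(r)/m_i$, this will yield
\[
\beta\bigl[\text{moment change}\bigr] \leq C\bigl(1+(n\omega_n\gamma_i)^{1/(1-s)}\bigr)m_i^{-1/n}V(r) + C\beta m_i^{1/n}V(r).
\]
The first contribution has exactly the same structure as the $(n\omega_n\alpha)^{1/(1-s)}V(r)$ term appearing in Lemma \ref{nondegeneracyE} and is absorbed into $\epsilon_0 V(r)^{(n-1)/n}$ under the smallness hypothesis $V(r)\leq \epsilon_0 \min\{m_i,\ m_i/(n\omega_n\gamma_i)^{n/(1-s)}\}$. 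The second contribution is the genuinely new term; absorbing $C\beta m_i^{1/n}V(r)$ into $\epsilon_0 V(r)^{(n-1)/n}$ amounts to requiring $V(r)^{1/n}\leq \epsilon_0/(C\beta m_i^{1/n})$, i.e.\ $V(r) \leq \epsilon_0^n/(C^n\beta^n m_i)$, which is exactly the new threshold $1/(m_i\beta^n)$ appearing in the statement.

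With these three smallness conditions in place, the proof will close as in Lemma \ref{nondegeneracyE}: one derives $S(r) \leq CA(r) + C\epsilon_0^{1/n}V(r)^{(n-1)/n}$, combines with the isoperimetric inequality $\nu_n V(r)^{(n-1)/n}\leq S(r)+A(r)$ and the identity $V'(r)=A(r)$ to obtain $V'(r)\geq (\nu_n/2)V(r)^{(n-1)/n}$, and integrates to conclude $V(r)\geq c r^n$. The main obstacle is conceptual rather than technical: it is the decoupling observation that allows me to apply \eqref{eq:minbound} component-wise and thereby extract the sharp threshold $1/(m_i\beta^n)$ in terms of $m_i$ rather than the total mass $m$; without it, one would be forced to use a global moment bound and the statement would not match.
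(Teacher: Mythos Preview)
Your proposal is correct and follows essentially the same approach as the paper. The decoupling observation (each $E_i$ is a $\J_{\alpha,\beta}$-minimizer with mass $m_i$, so \eqref{eq:minbound} applies componentwise) and the identification of the new threshold $1/(m_i\beta^n)$ coming from the moment term are exactly the ingredients the paper uses; the only cosmetic difference is that the paper separates the argument into two steps---first estimating $\J_{\alpha,\beta}(E_i')$ and then invoking a scaling inequality $\inf_{|F|=m_i}\J_{\alpha,\beta}\le\inf_{|F|=m_i/(1+\lambda)}\J_{\alpha,\beta}+[\cdots]\lambda$---whereas you compare $E_i$ directly with the rescaled competitor $tE_i'$ and compute the moment change in one shot.
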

Before proving this proposition, we state and prove the following corollary which allows us to  pass to the limit $\beta\to 0$ and prove Theorem \ref{thm:G}:
\begin{corollary}\label{cor:diam}
Let $\mathbb E = \{E_i\}_{i=1 ,\dots, N}$ be a  global minimizer of \eqref{eq:minGNR}. If  $ \beta \leq \frac{1}{m^{2/n}}$, then 
$$
\mathrm{diam} (E_i) \leq C \max\{1,(n\omega_n\gamma_i)^{\frac{n}{1-s}} \} m_i^{1/n}
$$
for all $i$.
In particular, we can assume that $E_i\subset B_{R_0}$ for some $R_0$ depending on $m$ and $\alpha$ but not on $\beta$.
\end{corollary}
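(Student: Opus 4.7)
The plan is to combine the non-degeneracy estimate of Proposition \ref{prop:NDG1} with the indecomposability of each component, essentially repeating the covering argument sketched at the end of Proposition \ref{lem:indecomposable}, and then to produce a $\beta$-independent confinement radius by translating the components.

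First I would reduce the non-degeneracy estimate under the hypothesis $\beta \le m^{-2/n}$. Since $m_i \le m$, we have $(m_i \beta^n)^{-1} \ge m_i^{-1} m^2 \ge m \ge m_i$, so the $\beta$-dependent term drops out of the minimum in Proposition \ref{prop:NDG1}. Using $\gamma_i = \alpha m_i^{(1-s)/n}$ one computes $m_i/(n\omega_n\gamma_i)^{n/(1-s)} = (n\omega_n\alpha)^{-n/(1-s)}$, so the estimate reduces to $|E_i \cap B_r(x_0)| \ge c r^n$ for $r \le \bar r_i := \min\{m_i^{1/n},\, (n\omega_n\alpha)^{-1/(1-s)}\}$, a $\beta$-independent quantity.

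Next, a standard Vitali covering yields at most $N_i \le C m_i \bar r_i^{-n}$ balls of radius $\bar r_i$ covering $E_i$ with centers in $E_i$. Since each $E_i$ is indecomposable (by the lemma preceding Proposition \ref{prop:NDG1}), the intersection graph of such a cover must be connected, so $\mathrm{diam}(E_i) \le C N_i \bar r_i \le C m_i \bar r_i^{-(n-1)}$. In the regime $\bar r_i = m_i^{1/n}$, equivalent to $(n\omega_n\gamma_i)^{n/(1-s)} \le 1$, this reads $\mathrm{diam}(E_i) \le C m_i^{1/n}$; in the regime $\bar r_i = (n\omega_n\alpha)^{-1/(1-s)}$ a short algebraic check (using $(n\omega_n\gamma_i)^{1/(1-s)} \ge 1$ in this range) upgrades the bound $C m_i (n\omega_n\alpha)^{(n-1)/(1-s)}$ to $C (n\omega_n\gamma_i)^{n/(1-s)} m_i^{1/n}$, and the two cases combine into the stated bound.

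For the final containment claim I would invoke the translation invariance of $P$ and $P_K$: replace each $E_i$ in the minimizer by a translate $E_i + v_i$, choosing the $v_i$ so that the translates are pairwise disjoint and contained in $B_{R_0}$ with $R_0 \sim N_0 \cdot \max_i \mathrm{diam}(E_i)$, where $N_0(m,\alpha)$ is the component-count bound established just before Proposition \ref{prop:NDG1}. This translation does not affect the $\G_\alpha$-energy and gives a configuration satisfying the desired containment, with $R_0$ depending only on $m$ and $\alpha$ and suitable for passing to the limit $\beta \to 0$ in the proof of Theorem \ref{thm:G}. The main obstacle I expect is the diameter step, namely converting the Vitali cover cardinality into a linear diameter estimate via indecomposability; this is the content of \cite{muratov-general} Lemma 7.2 already invoked in Proposition \ref{lem:indecomposable}, while everything else is essentially bookkeeping.
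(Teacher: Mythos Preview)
Your argument is correct and follows the same strategy as the paper: reduce Proposition~\ref{prop:NDG1} under $\beta\le m^{-2/n}$ so that the $\beta$-term drops from the minimum, then use indecomposability and a covering/chain argument (as in \cite{muratov-general}, Lemma~7.2) to convert the density lower bound into a diameter bound, and finally unpack the two cases $\bar r_i=m_i^{1/n}$ versus $\bar r_i=(n\omega_n\alpha)^{-1/(1-s)}$. The paper runs the covering with balls of radius $m_i^{1/n}$ rather than $\bar r_i$, but this is cosmetically the same computation.

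The only place you differ is the ``In particular'' clause: the paper (implicitly) relies on the confinement term $\beta\int|x|\chi_{E_i}$ to pin each minimizing component near the origin, so that the actual minimizer already sits in $B_{R_0}$, whereas you instead translate the components after the fact. Your translated configuration is no longer a minimizer of $\G_{\alpha,\beta}$ (the confinement term is not translation invariant), but since you correctly note it has the same $\G_\alpha$-energy this is perfectly adequate for the intended limit $\beta\to0$ in Theorem~\ref{thm:G}. Also, the components of $\mathbb E$ are just a list and need not be disjoint, so you can drop the disjointness requirement on the translates.
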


\begin{proof}[Proof of Corollary \ref{cor:diam}]

Without loss of generality, we can assume that there exists $x$, $y$ in $E_i$ such that $(y-x)\cdot e_1 \geq \mathrm{diam} (E_i) -cm_i ^{1/n}$.
Since $E_i$ is indecomposable, we can find $N$ balls $B_{m_i^{1/n}}(x_k)$ with $x_0=x$, $x_N = y$ and $(x_{k+1} -x_k)\cdot e_1 \leq 2m_i ^{1/n}$.
We then have 
\begin{equation}\label{eq:diambd} 
\mathrm{diam} (E_i) \leq C (N+1) m_i ^{1/n} \end{equation}
and Proposition \ref{prop:NDG1} implies that 
$$
|E_i \cap B_{m_i^{1/n}}(x_k)|\geq c\min \left\{1, \frac{1}{(n\omega_n\gamma_i)^{\frac{n}{1-s}}}\right\} m_i,
$$
as long as  $\beta \leq \frac{1}{m_i^{2/n}}$, which is  satisfied for all $i$ if $ \beta \leq \frac{1}{m^{2/n}}$.
We thus have 
$$m_i= |E_i| \geq c \sum_k  |E_i \cap B_{m_i^{1/n}}(x_k)| \geq c N \min \left\{1, \frac{1}{(n\omega_n \gamma_i)^{\frac{n}{1-s}}}\right\} m_i$$
which gives:
$$N\leq C \max\{1,(n\omega_n \gamma_i)^{\frac{n}{1-s}} \} $$
and the result now follows from \eqref{eq:diambd}.

\end{proof}

\begin{proof}[Proof of Proposition \ref{prop:NDG1}]
We denote $E=E_{i_0}$ and $m=m_{i_0}$ and we note that $E$ must minimize $\J_\alpha$ with the constraints $E\subset B_R$ and $|E|=m$.

Given $r>0$
we define the  functions:
$$
A(r)=H^{n-1}(\partial B_r(x_0)\cap E),
\quad S(r)=P(E,B_r(x_0)),
\quad V(r)=|E\cap B_r(x_0)|,
$$
%We know $V(0)=0$ and $V(r)\leq |B_1|r^n$ is non-decreasing in $r$. We want to prove when $r<C_2r_0$,
%\[
%V(r)\geq Cr^n.\\
%\]
and the set
\[
E'=E\setminus B_r(x_0).
\]
Proceeding as in the proof of Lemma \ref{nondegeneracyE}, we then have
\begin{align}
\J_{\alpha,\beta}(E') & = \J_{\alpha,\beta} (E) -S(r) + A(r) +\alpha s(1-s) (P_K(E)-P_K(E')) -\beta \int_{E\cap B_r(x_0)} |x| \, dx\nonumber\\
& \leq \J_{\alpha,\beta} (E) -S(r) + A(r) + \frac 1 2 (S(r)+A(r)) + \frac 1 2 (n\omega_n \alpha)^\frac{1}{1-s} V(r)\nonumber \\
& \leq \J_{\alpha,\beta} (E) -\frac 1 2  S(r) + \frac 3 2  A(r) +  \frac 1 2  \gamma^\frac{1}{1-s} m^{-1/n}V(r)\label{eq:EE'}
\end{align}

Next, we note that for $\lambda>0$, and $E$ minimizer of $\J_{\alpha,\beta}$ with $|E|=m$,
we have
\begin{align*}
\J_{\alpha,\beta}((1+\lambda)^{\frac{1}{n}} E) 
& \leq  (1+\lambda)^\frac{n-1}{n} P(E) - \alpha s(1-s) P_K((1+\lambda)^{\frac{1}{n}} E)  + \beta (1+\lambda)^\frac{n+1} {n}\int_{\R^n} |x| \chi_E\, dx \nonumber\\
& \leq  \J_{\alpha,\beta}(E) 
+ \frac{n-1}{n} \lambda P(E) + \frac{n+1}{n} \lambda \beta \int_{\R^n} |x| \chi_E\, dx \nonumber \\
& \qquad + \alpha s(1-s)(P_K(E)- P_K((1+\lambda)^{\frac{1}{n}} E)  ).
\end{align*}
Using  \eqref{eq:minbound} together with \eqref{eq:PKEL} 
%the fact that 
%$$ \alpha s(1-s) [P_K(E)- P_K\left((1+\lambda)^{\frac 1 n}E\right) ]
%\leq C \alpha \lambda P(E)^s m^{1-s} \leq C \lambda\left[s P(E)+ (1-s) m \right]
%$$
we get:
\begin{align*}
\J_{\alpha,\beta}((1+\lambda)^{\frac{1}{n}} E) 
& \leq  \J_{\alpha,\beta}(E) 
+ \left[ C(1+(n\omega_n \gamma)^\frac{1}{1-s}) m^{\frac{n-1}{n}} + \beta m^\frac{n+1}{n}\right] \lambda 
\end{align*}
and so
$$
\inf_{|F| = (1+\lambda) m} \J_{\alpha,\beta}(F)
\leq 
\inf_{|F| =   m} \J_{\alpha,\beta}(F)+ \left[ C(1+(n\omega_n  \gamma)^\frac{1}{1-s}) m^{\frac{n-1}{n}} + \beta m^\frac{n+1}{n}\right] \lambda 
$$
which also implies
$$
\inf_{|F| = m} \J_{\alpha,\beta}(F)
\leq 
\inf_{|F| =   \frac{m}{1+\lambda} } \J_{\alpha,\beta}(F)+ \left[ C(1+(n\omega_n  \gamma)^\frac{1}{1-s}) m^{\frac{n-1}{n}} + \beta m^\frac{n+1}{n}\right] \lambda 
$$

Choosing $\lambda$ such that $(1+\lambda) |E' | = m$, that is
$$
\lambda:=\frac{V(r)}{m-V(r)}$$
we deduce from \eqref{eq:EE'} that
\begin{align*}
\J_{\alpha,\beta}(E)
& \leq \J_{\alpha,\beta} (E) -\frac 1 2  S(r) + \frac 3 2  A(r) + C  (n\omega_n  \gamma)^\frac{1}{1-s} m^{-1/n}V(r) \\
& \qquad + \left[ C(1+(n\omega_n \gamma)^\frac{1}{1-s}) m^{\frac{n-1}{n}} + \beta m^\frac{n+1}{n}\right] \lambda 
\end{align*}
that is
\begin{align*}
 S(r) & \leq  C  A(r) + C  (n\omega_n \gamma)^\frac{1}{1-s} m^{-1/n}V(r)+ \left[ C(1+(n\omega_n \gamma)^\frac{1}{1-s}) m^{\frac{n-1}{n}} + \beta m^\frac{n+1}{n}\right] \lambda 
\end{align*}

Given $\bar r>0$, we assume that 
$$
V(\bar r)\leq \eps_0 \min\left\{ 1,\frac{1}{(n\omega_n \gamma)^{\frac{n}{1-s}}}, \frac{1}{m^2 \beta^n}\right\} m 
$$
for some $\eps_0<1/2$ to be chosen later. 
Since $r\mapsto V(r)$ is non-decreasing, we then have 
\begin{equation}\label{eq:eps02} 
V( r)\leq \eps_0 \min\left\{ 1,\frac{1}{(n\omega_n  \gamma)^{\frac{n}{1-s}}}, \frac{1}{m^2 \beta^n} \right\} m  \quad\mbox{ for all $r\leq \bar r$,}
\end{equation}
and so in particular 
$$
 \lambda(r)=\frac{V(r)}{m-V(r)} \leq 2 \frac{V(r)}{m}\quad\mbox{ for all $r\leq \bar r$.}
$$

Putting everything together, we deduce:
\begin{equation*}
\begin{aligned}
S(r)
%&\leq 3\mu (Cm^{\frac{n-1}{n}}-S(r)+A(r))+C\mu \alpha^{\frac{1}{1-s}}m\\
%&+\frac{1}{2}(S(r)+A(r))+C_4\alpha^{\frac{1}{1-s}}V(r)\\
&\leq C A(r)+  C  (1+ (n\omega_n  \gamma)^\frac{1}{1-s}) m^{-1/n}V(r) 
+ C \beta m^\frac{1}{n} V(r)\\
&\leq C A(r)+  C\left[   (1+(n\omega_n  \gamma)^\frac{1}{1-s}) (V(r)/m )^{1/n}
+ C \beta (m V(r))^\frac{1}{n} \right] V(r)^{\frac{n-1}{n}}\\
&\leq C A(r)+  C \eps_0^{1/n} V(r)^{\frac{n-1}{n}}
\end{aligned}
\end{equation*} 
In order to conclude, we combine \eqref{eq:SA} with the isoperimetric inequality
\[
S(r)+A(r)\geq \nu_n V(r)^{\frac{n-1}{n}},
\]
and the fact that
\[
V'(r)=A(r) \qquad\mbox{for a.e. $r>0$},
\]
to conclude:
\begin{align*}
\nu_nV(r)^{\frac{n-1}{n}}&\leq C A(r)+C\eps_0  V(r)^\frac{n-1}{n}  \\
&\leq CV'(r)+C\eps_0  V(r)^\frac{n-1}{n}
\end{align*}
%Since $\mu=\frac{V(r)}{m-V(r)}$, then when $r<C_2r_0$ with $C_2=\min\{\frac{C_n}{48C},\frac{1}{2}\}$, we have $V(r)<C_2^nm$ and thus
%\[
%6C\mu m^{\frac{n-1}{n}}\leq\frac{1}{4}C_nV(r)^{\frac{n-1}{n}}.
%\] 
%Also, when $\alpha m^{\frac{1-s}{n}}<\gamma_2$,
%\[
%2C \mu \alpha^{\frac{1}{1-s}}m\leq \frac{1}{12C}\alpha^{\frac{1}{1-s}}m^{\frac{1}{n}}C_nV(r)^{\frac{n-1}{n}}\leq \frac{1}{4}C_nV(r)^{\frac{n-1}{n}}.
%\]
%When $r<C_2r_0$ and $\alpha m^{\frac{1-s}{n}}<\gamma_2$,
%\[
%2C_4\alpha^{\frac{1}{1-s}}V(r)\leq \frac{1}{4}C_nV(r)^{\frac{n-1}{n}}.
%\]
%Thus from \eqref{V'(r)geq} we obtain the following ODE for $V(r)$ when $\gamma<\gamma_2$ and $0<r<C_2r_0$,
Choosing $\eps_0$ small enough, we thus have
$$
V'(r)\geq \frac{\nu_n}{2} V(r)^{\frac{n-1}{n}},\qquad\mbox{for a.e. $0<r<\bar r$}
$$
with $V(0)=0$, which implies
\[
V(\bar r)\geq \left(\frac{\nu_n}{2n}\right)^n \,{\bar r}^n 
\]
and the result follows.
\end{proof}

\section{Proof of Propositions \ref{prop:as} and \ref{prop:JGN}}\label{sec:asJ}
%Asymptotic behavior of $\inf_{E\in \E_m} \J_\alpha(E)$ when $m\to \infty$
%We are considering $J(E)=P(E)-P_s(E)$ with $|E|=m$, which is the case of $\mathcal{E}_{\alpha}$ when $J(x)=|x|^{-(n+s)}$ and $\alpha=1$. Thus we have the existence of minimizers when $m<m_0$ and uniqueness of minimizers when $m<m_1$. In this section, we want to discuss the asymptotic behaviour of $\inf{\{J(E),|E|=m\}}$ when $m$ is large.
%\begin{lemma}
%Let $J(E)=P(E)-P_s(E)$ with $|E|=m$. Then
%\[
%\inf_{|E|=m}{J(E)}=-C_s\mu^{\frac{1}{1-s}}m+o(m)
%\]
%with $\mu>0$ the optimal constant in the Gagliardo-Nirenberg equalities
%\begin{align}\label{GNinequality}
%P_s(E)\leq \mu P(E)^s|E|^{1-s}.
%\end{align}
%\end{lemma}
\begin{proof}[Proof of Proposition \ref{prop:as}]
Using the scaling properties of $\F_{s,\alpha}$, it is enough to prove the result when $\alpha=1$. We denote
$$\F = P-s(1-s )P_s$$
and $\G$ the corresponding generalized functional.
Next we note that we clearly have $\inf_{|F|=m} \F (F) \geq \inf_{|\mathbb F|=m} \G (\mathbb F)$ (since we can take $\mathbb F = \{ F\}$, and the other inequality is obtained by recalling (Theorem \ref{thm:G}) that the minimizer of $\G$ has finitely many bounded component. We can thus construct a sequence $F_k$ by taking those components and sending them to infinity is different direction to get $\F(F_k) \to \inf_{|\mathbb F|=m} \G (\mathbb F)$.

We recall that  $\mu_{n,s}>0$, defined in \eqref{eq:opt}, 
denotes the optimal constant in the interpolation inequality \eqref{eq:GN}.
We thus have
$$
\F (E)=P(E)- s(1-s)P_s(E)\geq P(E)-\mu_{n,s} P(E)^s m^{1-s}.
$$
Since the minimum of the function $g(x)= x - \beta x^s$  is $-(1-s)s^{\frac{s}{1-s}} \beta^{\frac{1}{1-s}}$, which is attained when $x=(s\beta)^{\frac{1}{1-s}}$, we deduce
\begin{equation}\label{JEgeq}
\begin{aligned}
\F (E) \geq -C_s \mu_{n,s}^{\frac{1}{1-s}}m, \qquad C_s=(1-s)s^{\frac{s}{1-s}}>0.
\end{aligned}
\end{equation}
We also note that we have equality if $E$ achieves the infimum in \eqref{eq:opt} and if 
$P(E)=(s \mu_{n,s})^{\frac{1}{1-s}}m$.
\medskip

Since we do not know whether there exists a set $E$ which achieves the optimal  constant in  \eqref{eq:GN}, we consider, given $\eps>0$, a set $E_0$ such that $|E_0|=1$ and 
\begin{align}\label{GNconstant}
P_s(E_0)\geq(\mu_{n,s}-\epsilon)P(E_0)^s
\end{align}
In order to find a set for which there is almost equality in \eqref{JEgeq}, 
we take $k\in \N$ and consider the set 
$E_k$ made of $k$ copies of the rescaled set $(\frac{m}{k})^{1/n}E_0$ (with mass $\frac m k$) positioned in $\R^n$ so that the distance between any two copies is at least $R\gg1$.
The set $E_k$ satisfy $|E_k|=m$ and 
\begin{align}
\G(E_k)&=k\left( \left(\frac{m}{k}\right)^{\frac{n-1}{n}}P(E_0)-\left(\frac{m}{k}\right)^{\frac{n-s}{n}}P_s(E_0) \right)\nonumber \\
&=m^\frac{n-1}{n} P(E_0) \left[ k^{1/n}-k^{s/n} m^\frac{1-s}{n} \frac{P_s(E_0)}{P(E_0)}\right] \nonumber\\
&=m^\frac{n-1}{n} P(E_0) g(k^{1/n}) 
\label{JEk}
\end{align}
where
$$g(x) = x-x^s m^\frac{1-s}{n} \frac{P_s(E_0)}{P(E_0)}.$$
Since we can take $R\to\infty$ to find $\inf \F(E)$, we will discard the $\mathcal O(1/R^{n+s})$ below (this amount to consider the set $E_k$ in which the $k$ copies of $(\frac{m}{k})^{1/n}E_0$  have been sent to infinity in different directions).
Furthermore, we note that the function $g(x)$ is minimum when 
$x=\bar x = \left(\frac{sP_s(E_0)}{P(E_0)}\right)^{\frac{1}{1-s}}m^\frac{1}{n}$ which correspond to 
$$k=\bar k :=\left(\frac{sP_s(E_0)}{P(E_0)}\right)^{\frac{n}{1-s}}m$$
However $\bar{k}$ is not necessarily an integer, so we take
$k = [\bar k]$ (the integer part of $\bar k$) and denote $\delta = \{k\} :=k-[k]\in[0,1)$ the fractional part of $\bar k$.
The convexity of $g$ implies
\begin{align*}
 g(k^{1/n}) 
 & \leq g(\bar k^{1/n})+g'(k^{1/n}) (k^{1/n}-\bar k^{1/n})\\
 & \leq -C_s m^{\frac{1}{n}} \left(\frac{P_s(E_0)}{P(E_0)}\right)^{\frac{1}{1-s}}+\left( 1-  s\left(\frac m k\right)^{\frac{1-s}{n}} \frac{P_s(E_0)}{P(E_0)} \right) (k^{1/n}-\bar k^{1/n})\\
 & \leq -C_s m^{\frac{1}{n}} \left(\frac{P_s(E_0)}{P(E_0)}\right)^{\frac{1}{1-s}}+\left( 1-  \left(\frac {\bar k} k\right)^{\frac{1-s}{n}}  \right) (k^{1/n}-\bar k^{1/n})\\
 & \leq -C_s m^{\frac{1}{n}} \left(\frac{P_s(E_0)}{P(E_0)}\right)^{\frac{1}{1-s}}+
C k^{\frac 1 n -2}\delta^2 
 \end{align*}
 where the constant $C$ depends only on $n$ and $s$. We deduce
\begin{align}
\G(E_k)
&\leq   -C_s m \left(\frac{P_s(E_0)}{P(E_0)^s}\right)^{\frac{1}{1-s}}    +
C m^\frac{n-1}{n} P(E_0) k^{\frac 1 n -2}\delta^2 
  \nonumber \\
&\leq   -C_s\left(\mu_{n,s}-\eps\right)^{\frac{1}{1-s}}  m  +
C m \left(\frac{\delta}{k}\right)^2  \label{eq:minFE}
\end{align}
Since $\delta\in[0,1)$ we see that there exists $m_\eps$ (importantly, $m_\eps$ depends on $P(E_0)$ which might go to infinity as $\eps\to0$) such that if $m\geq m_\eps$ then 
\begin{align*}
\G(E_k)
 &\leq   -C_s  \mu_{n,s} ^{\frac{1}{1-s}} m   +
C\eps  m  
\end{align*}
and so
$$
\inf _{|E|=m} \G(E) \leq   -C_s  \mu_{n,s}  ^{\frac{1}{1-s}} m   +
C\eps  m \qquad \mbox{ for all } m\geq m_\eps.$$
The result follows.
\end{proof}

\begin{proof}[Proof of Proposition \ref{prop:JGN}]
\noindent{ (i)} If there exists a set with finite perimeter $E_0$  such that $|E_0|=1$ and 
$$ P_s(E_0) = \mu_{n,s} P(E_0)^s,$$
then we can take $\eps=0$ in the argument above, and \eqref{eq:minFE} implies
\begin{align*}
\G(\mathbb E_k) & \leq -C_s\mu_{n,s}^{\frac{1}{1-s}}  m  +
C m \left(\frac{\delta}{k}\right)^2 .
\end{align*}
For all $m$ such that $k=\left(\frac{sP_s(E_0)}{P(E_0)}\right)^{\frac{n}{1-s}}m\in \N$, we have $\delta=0$ and so 
\begin{align*}
\G(\mathbb  E_k) & \leq -C_s\mu_{n,s}^{\frac{1}{1-s}}  m  .
\end{align*}
Together with \eqref{JEgeq}, it shows that $\mathbb  E_k$ is a minimizer for $\G$.

This shows that whenever 
$$m\in \left\{ k \left(\frac{P(E_0)}{sP_s(E_0)}\right)^{\frac{n}{1-s}}\, ;\,  k\in \N\right\}$$
then a minimizing sequence for $\F$ with volume constraint $|E|=m$ can be constructed by taking 
$k$ copies  of the set $(\frac{m}{k})^{1/n}E_0$ and sending them to infinity in different directions.

\medskip

\noindent{ (ii)} Assume there exists a sequence $\{m_k\}_{k=1}^{\infty}$ with $m_k \to \infty$ such that there exist bounded minimizers $E_{k}$ of $\F_{s,\alpha}(F)$ with $|F|=m_k$. Using the first part of Proposition \ref{prop:as}, we have
\begin{equation*}
\F_{s,\alpha}(E_k)=\inf_{|F|=m_k}{\F_{s,\alpha}(F)}=-(1-s)s^{\frac{s}{1-s}} (\alpha\mu_{n,s})^{\frac{1}{1-s}}m_k+o(m_k).
\end{equation*}
Then the set $F_k=m_k^{-\frac{1}{n}}E_k$ with $|F_k|=1$ satisfies
\begin{equation}\label{eq:GNsequence}
\lim_{k\to \infty}{P(F_k)-\alpha s(1-s)m_k^{\frac{1-s}{n}}P_s(F_k)}=-(1-s)s^{\frac{s}{1-s}} (\alpha\mu_{n,s})^{\frac{1}{1-s}}m_k^{1/n}.
\end{equation}
Using the same argument when proving \eqref{JEgeq}, we have
\[
P(F_k)-\alpha s(1-s)m_k^{\frac{1-s}{n}}P_s(F_k)\geq P(F_k)-\alpha \mu_{n,s}m_k^{\frac{1-s}{n}}P^s(F_k)\geq -(1-s)s^{\frac{s}{1-s}}(\alpha \mu_{n,s})^{\frac{1}{1-s}}m_k^{\frac{1}{n}},
\]
which implies $\{F_k\}$ is a minimizing sequence for the optimal constant in Galgliardo-Nirenberg inequality \eqref{eq:opt}.
From \eqref{eq:GNsequence} we have
\[
\frac{c_n}{s(1-s)} P^s(F_k)\geq P_s(F_k)\geq s^{\frac{2s-1}{1-s}}\alpha^{\frac{s}{1-s}}\mu_{n,s}^{\frac{1}{1-s}}m_k^{s/n},
\]
which implies $P(F_k)\geq Cm_k^{\frac{1}{n}}\to \infty$ as $k \to \infty$. 
\end{proof}

\appendix

\section{Proof of Proposition \ref{prop:GN}}\label{app:GN} 
First, we write
\begin{align*}
P_K(E) &  = \int_{\R^n}  K(z) \int_{\R^n} |\chi_E(x)-\chi_E(x+z)|dz dx \\
&  = \int_{|z|\geq R}  K(z) \int_{\R^n} |\chi_E(x)-\chi_E(x+z)|dx dz+\int_{|z|\leq R}  K(z) \int_{\R^n} |\chi_E(x)-\chi_E(x+z)|dx\, dz
\end{align*}
Using the bound
$$  \int_{\R^n} |\chi_E(x)-\chi_E(x+z)| dx \leq 2|E|$$
in the first integral and 
\begin{equation}\label{eq:BVbd}
 \int_{\R^n} |\chi_E(x)-\chi_E(x+z)|dx  \leq  |z| \int_{\R^n} |D\chi_E|   
 \end{equation}
in the second integral, we get:
\begin{align*}
P_K(E) &\leq 2|E|  \int_{|z|\geq R}  K(z) dz+ P(E) \int_{|z|\leq R} |z|  K(z)  \,  dz\\
&\leq n \omega_n \left( 2|E|  \frac{R^{-s}}{s}+ P(E) \frac{R^{1-s}}{1-s} \right).
\end{align*}
Optimizing with respect to $R$ by taking $R=\frac{2|E|}{P(E)}$ yields the result.

\section{Proof of Lemma \ref{lem:CVPK}} \label{sec:CVPK}
We write
\begin{align*}
| P_K(F_k) - P_K(E) |
&\leq 
\left| \int_{\R^n}  \int_{|x-y|\geq \eta} K(x-y)|\chi_{F_k}(x)-\chi_{F_k}(y)|dx\, dy -\right.\\
& \qquad\qquad  \left. \int_{\R^n} \int_{|x-y|\geq \eta} K(x-y)|\chi_E(x)-\chi_E(y)|dx\, dy \right| \\
& \quad + \int_{\R^n} \int_{|x-y|\leq \eta} K(x-y)|\chi_{F_k}(x)-\chi_{F_k}(y)|dx\, dy\\
& \quad + \int_{\R^n} \int_{|x-y|\leq \eta} K(x-y)|\chi_{E}(x)-\chi_{E}(y)| dx\, dy
\end{align*}
The strong convergence in $L^1(\R^n)$ implies that the first term converges to $0$ as $k\to\infty$.
Using \eqref{eq:BVbd}, we deduce
$$
\limsup_{k\to\infty}  | P_K(F_k) - P_K(E) |\leq (P(F_k)+P(E)) \int_{\R^n} \int_{|x-y|\leq \eta} K(x-y)|x-y|dx\, dy \leq C \eta^{1-s}.
$$
Since this holds for all $\eta>0$, the result follows.

\section{Proof of Lemma \ref{lem:psi}}\label{sec:psi}
It is equivalent to prove
\begin{equation}\label{eq:phi}
\left|\int_{\partial B_1}{(\psi(1+\lambda)-\psi(1))d\mathcal{H}^{n-1}(x)}-P(B_1)\beta \lambda\right|\leq \frac{C}{s(1-s)}P(B_1)r_0^{n-s}\lambda^2.
\end{equation}
Using \eqref{P_J-psi}, we write
\begin{align*}
\int_{\partial B_1}{\psi(1+\lambda)d\mathcal{H}^{n-1}(x)}&=P_K(B_{(1+\lambda)r_0})\\
&=\frac{1}{2}\int_{\mathbb{R}^n}{\int_{\mathbb{R}^n}{|\chi_{B_{(1+\lambda)r_0}}(x)-\chi_{B_{(1+\lambda)r_0}}(x+z)|K(z)dz}dx}\\
&=\frac{1}{2}\int_{\mathbb{R}^n}{\int_{\mathbb{R}^n}{|\chi_{B_{r_0}}(x)-\chi_{B_{r_0}}(x+z)|(1+\lambda)^{2n}K((1+\lambda)z)dz}dx}\\
&=\frac{1}{2}\int_{\mathbb{R}^n}{\int_{\mathbb{R}^n}{\left|\chi_{B_{r_0}}(x)-\chi_{B_{r_0}}(x+\frac{z}{1+\lambda})\right|(1+\lambda)^{n}K(z)dz}dx}\\
&=\frac{1}{2}(1+\lambda)^{n}\int_{\mathbb{R}^n}{\phi\left(\frac{z}{1+\lambda}\right)K(z)dz},
\end{align*}
where we defined
\[
\phi(|z|):=\phi(z)=\int_{\mathbb{R}^n}{|\chi_{B_{r_0}}(x)-\chi_{B_{r_0}}(x+z)|dx}
\]
which is a radially symmetric function. 

Setting
\[
F(\lambda):=(1+\lambda)^{n}\int_{\mathbb{R}^n}{\phi\left(\frac{z}{1+\lambda}\right) K(z)dz},
\]
which satisfies in particular,
\[
F(0)=P_K(B_{r_0})=\int_{\partial B_1}{\psi(1)d\mathcal{H}^{n-1}(x)},
\]
we see that \eqref{eq:phi} is equivalent to showing that there exists $\tilde{\beta}=P(B_1)\beta\in \mathbb{R}$ such that
\[
|F(\lambda)-F(0)-\tilde{\beta}\lambda|<\frac{C}{s(1-s)}r_0^{n-s}\lambda^2
\]
for all $|\lambda|<1/2$.  Clearly, this holds if we take $\tilde{\beta}=F'(0)$ and prove 
\[
|\tilde{\beta}|\leq \frac{C}{s(1-s)}r_0^{n-s},
\]
and 
\[
\sup_{|\lambda|<1/2}{|F''(\lambda)|}\leq \frac{C}{s(1-s)}r_0^{n-s}.
\]

To prove these, we note that for all $0\leq t\leq 2r_0$, we have:
\begin{align*}
\phi(t)& =2|B_{r_0}|-4\int_{t/2}^{r_0}{\omega_{n-1}(r_0^2-\tau^2)^{\frac{n-1}{2}}d\tau},\\
\phi'(t) & =4\omega_{n-1}\left(r_0^2-\frac{t^2}{4}\right)^{\frac{n-1}{2}},\\
\phi''(t) & =-(n-1)\omega_{n-1}t\left(r_0^2-\frac{t^2}{4}\right)^{\frac{n-3}{2}},
\end{align*}
with $\omega_{n-1}$ the $n-1$ dimensional unit ball volume,
while when $t>2r_0$, we find
\[
\phi(t)=2|B_{r_0}|, \quad \phi'(t)=0, \quad \phi''(t)=0. 
\]
We thus have:
\begin{align*}
F'(\lambda)&=n(1+\lambda)^{n-1}\int_{\mathbb{R}^n}{\phi\left(\frac{z}{1+\lambda}\right)K(z)dz}\\
& \qquad -(1+\lambda)^{n-2}\int_{\mathbb{R}^n}{\phi'\left(\frac{|z|}{1+\lambda}\right)|z|K(z)dz}
\end{align*}
and therefore
\begin{align*}
\tilde{\beta}:=F'(0)&=n\int_{\mathbb{R}^n}{\phi(z)K(z)dz}-\int_{\mathbb{R}^n}{\phi'(|z|)|z|K(z)dz}\\
&=nP_K(B_{r_0})-4P(B_1)\omega_{n-1}\int_0^{2r_0}{\left(r_0^2-\frac{t^2}{4}\right)^{\frac{n-1}{2}}t^n K(t)dt}\\
&=nP_K(B_{r_0})-4P(B_1)\omega_{n-1}r_0^{n-s}\int_0^{2}{\left(1-\frac{t^2}{4}\right)^{\frac{n-1}{2}}t^n K(t)dt}
\end{align*}
and the integral is bounded since near $t=0$, $\int_0^2{t^n K(t)dt}\leq \frac{C}{1-s}$ and $P_K(B_{r_0})\leq \frac{C}{s(1-s)}r_0^{n-s}$. Thus we have 
\begin{align*}
|\tilde{\beta}|\leq \frac{C}{s(1-s)}r_0^{n-s}
\end{align*}
which implies
\begin{align}\label{betabound}
|\beta|=|\frac{\tilde{\beta}}{P(B_1)}|\leq \frac{C}{s(1-s)}r_0^{n-s}.
\end{align}

Next, we prove that
\[
\sup_{|\lambda|<1/2}{|F''(\lambda)|}\leq \frac{C}{s(1-s)}r_0^{n-s}.
\]
We can see
\begin{align*}
F''(\lambda)&=n(n-1)(1+\lambda)^{n-2}\int_{\mathbb{R}^n}{\phi\left(\frac{z}{1+\lambda}\right)K(z)dz}\\
&\quad -(2n+2)(1+\lambda)^{n-3}\int_{\mathbb{R}^n}{\phi'\left(\frac{|z|}{1+\lambda}\right)|z|K(z)dz}\\
&\quad +(1+\lambda)^4\int_{\mathbb{R}^n}{\phi''\left(\frac{|z|}{1+\lambda}\right)|z|^2K(z)dz}\\
&=J_1+J_2+J_3.
\end{align*}
The first integral
\[
|J_1|\leq \frac{C}{(1+\lambda)^2}P_K(B_{(1+\lambda)r_0})\leq CP_s(B_{(1+\lambda)r_0})\leq \frac{C}{s(1-s)}r_0^{n-s}.
\]
The second integral
\begin{align*}
|J_2|&\leq C\int_{\mathbb{R}^n}{\phi'\left(\frac{|z|}{1+\lambda}\right)|z|K(z)dz}\\
&=CP(B_1)\int_{0}^{2r_0(1+\lambda)}{t^{n-1}\phi'\left(\frac{t}{1+\lambda}\right)t K(t)dt}\\
&\leq C\int_{0}^{2r_0(1+\lambda)}{t^{n-1}\left(r_0^2-\frac{t^2}{4(1+\lambda)^2}\right)^{\frac{n-1}{2}}t\frac{1}{t^{n+s}}dt}\\
&\leq Cr_0^{n-s}\int_{0}^{2(1+\lambda)}{t^{-s}\left(1-\frac{t^2}{4(1+\lambda)^2}\right)^{\frac{n-1}{2}}dt}\\
&\leq \frac{C}{1-s}r_0^{n-s}\\
\end{align*}
since the integral is bounded by $\int_0^3{t^{-s}dt}\leq \frac{C}{1-s}$.
The last integral
\begin{align*}
|J_3|&= CP(B_1)\int_{0}^{2r_0(1+\lambda)}{\left|\phi''\left(\frac{t}{1+\lambda}\right)\right| t^2 K(t)t^{n-1}dt}\\
&\leq C\int_{0}^{2r_0(1+\lambda)}{t\left(r_0^2-\frac{t^2}{4(1+\lambda)^2}\right)^{\frac{n-3}{2}}t^2t^{-n-s}t^{n-1}dt}\\
&=Cr_0^{n-s}\int_0^{2(1+\lambda)}{\left(1-\frac{t^2}{4(1+\lambda)^2}\right)^{\frac{n-3}{2}}t^{2-s}dt}\\
&=Cr_0^{n-s}\int_0^{2(1+\lambda)}{\left(1-\frac{t}{2(1+\lambda)}\right)^{\frac{n-3}{2}}\left(1+\frac{t}{2(1+\lambda)}\right)^{\frac{n-3}{2}}t^{2-s}dt}\\
&\leq Cr_0^{n-s}
\end{align*}
since the singularity when $n=2$ near $t=2(1+\lambda)$ is bounded by $(1-\frac{t}{2(1+\lambda)})^{-1/2}$ which is integrable, and when $n\geq 3$, there is no singularity. Thus, we prove
\[
\sup_{|\lambda|<1/2}{|F''(\lambda)|}\leq \frac{C}{s(1-s)}r_0^{n-s}.
\]
Therefore, with our choice of $\tilde{\beta}=F'(0)$,
\[
|F(\lambda)-F(0)-\tilde{\beta}\lambda|<\frac{C}{s(1-s)}r_0^{n-s}\lambda^2.
\]

\bibliographystyle{siam}
\bibliography{minimizer}

\end{document}